\newcommand\reallywidecheck[1]{%
\savestack{\tmpbox}{\stretchto{%
  \scaleto{%
    \scalerel*[\widthof{\ensuremath{#1}}]{\kern-.6pt\bigwedge\kern-.6pt}%
    {\rule[-\textheight/2]{1ex}{\textheight}}
  }{\textheight}%
}{0.5ex}}%
\stackon[1pt]{#1}{\scalebox{-1}{\tmpbox}}%
}
\newtheorem{theorem}[equation]{Theorem}
\newtheorem{corollary}[equation]{Corollary}
\newtheorem{lemma}[equation]{Lemma}
\newtheorem{proposition}[equation]{Proposition}
\newtheorem{remark}[equation]{Remark}
\numberwithin{equation}{section}
\newcommand{\supp}{\text{\rm supp}\,}
\begin{document}

\title[]{Characterizations of $H^1$ and Fefferman-Stein decompositions of ${\rm BMO}$   functions\\
by systems of singular integrals \\
in the Dunkl setting}

\subjclass[2020]{{primary: 44A20, 42B20, 42B25, 47B38, 35K08, 33C52, 39A70}}
\keywords{rational Dunkl theory, root systems,  singular integrals, Hardy spaces, bounded mean oscillation spaces }

\author[Jacek Dziubański]{Jacek Dziubański}
\author[Agnieszka Hejna]{Agnieszka Hejna}
\begin{abstract}  We extend the classical theorem of Uchiyama about constructive  Fefferman-Stein  decompositions of ${\rm BMO}$ functions by systems of  singular integrals to the rational Dunkl setting. On $\mathbb R^N$ equipped with a root system $R$ and a multiplicity function $k\geq 0$, let $dw(\mathbf x)=\prod_{\alpha\in R} |\langle \alpha, \mathbf x\rangle |^{k(\alpha)}\;d\mathbf x$ denote the associated measure and let $\mathcal F$ stand for the Dunkl {(Fourier-Dunkl)} transform.  Consider a system $(\theta_0,\theta_1,\theta_2,\dots,\theta_d)$  of smooth away from the origin and homogeneous of degree zero functions on $\mathbb R^N$  with $\theta_0(\xi)\equiv 1$. We prove that if 
\begin{displaymath}
\text{\rm rank} \, 
\left( \begin{array}{ccccc}
1&\theta_1(\xi) & \theta_2(\xi)& \ldots &\theta_d(\xi) \\
1&\theta_1(-\xi) & \theta_2(-\xi)& \ldots &\theta_d(-\xi)
\end{array} \right) =2 \quad \text{for all } \xi\in\mathbb R^N, \ \|\xi\|=1, 
\end{displaymath}
then any compactly supported ${\rm BMO}(\mathbb R^N, \|\mathbf x-\mathbf y\|,dw)$  function $f$ can be decomposed into  
$$ f=g_0+\sum_{j=1}^d \mathbf S^{\{j\}} g_j, \quad \Big\|\sum_{j=0}^d g_j\Big\|_{L^\infty} \leq C\|f\|_{{\rm BMO}}, $$ 
where $\mathbf S^{\{j\}} g=\mathcal F^{-1}(\theta_j\mathcal Fg)$.  As a corollary we obtain characterizations of the Hardy space $H^1_{\rm Dunkl}$  by the systems of the singular integral operators $ ({\rm Id}, \mathbf S^{\{1\}}, \mathbf S^{\{2\}},\dots, \mathbf S^{\{d\}})$. 
\end{abstract}

\address{Jacek Dziubański, Uniwersytet Wroc\l awski,
Instytut Matematyczny,
Pl. Grunwaldzki 2,
50-384 Wroc\l aw,
Poland}
\email{jdziuban@math.uni.wroc.pl}

\address{Agnieszka Hejna, Uniwersytet Wroc\l awski,
Instytut Matematyczny,
Pl. Grunwaldzki 2,
50-384 Wroc\l aw,
Poland 
\&
Department of Mathematics,
Rutgers University,
Piscataway, NJ 08854-8019, USA}
\email{hejna@math.uni.wroc.pl}

\maketitle

\enlargethispage{0.37cm}
\maketitle
\tableofcontents
\thispagestyle{empty}

\section{Introduction }\label{Sec:Intro}

{Characterizations of Hardy spaces on the  Euclidean spaces by systems of singular integrals have  their origin in description of the spaces by means of the Riesz transforms, see Stein-Weiss \cite{Stein-Weiss} and Fefferman-Stein \cite{FeSt}. 
A general  result}  of Uchiyama~\cite{Uchiyama} asserts that if a system  $(\theta_0(\xi), \theta_1(\xi),\theta_2(\xi),...,\theta_d(\xi))$ of complex valued functions on $\mathbb R^N$, which are $C^\infty$ away from the origin and homogeneous of degree zero, satisfies the condition 
\begin{displaymath}\label{eq:triangle_new}\tag{$\star$}
\text{\rm rank} \, 
\left( \begin{array}{ccccc}
\theta_0(\xi)&\theta_1(\xi) & \theta_2(\xi)& \ldots &\theta_d(\xi) \\
\theta_0(-\xi)&\theta_1(-\xi) & \theta_2(-\xi)& \ldots &\theta_d(-\xi)
\end{array} \right) =2 \quad \text{for all } \xi\in\mathbb R^N, \ \|\xi\|=1, 
\end{displaymath}
then the system of the Fourier multiplier operators $(S^{\{0\}}, S^{\{1\}},S^{\{2\}},...,S^{\{d\}})$, where 
$$ \widehat{S^{\{j\}}f}(\xi)=\theta_j(\xi)\widehat f(\xi),$$
characterizes the classical real Hardy space $H^1(\mathbb R^N)$, that is, 
an $L^1(\mathbb R^N)$-function $f$ belongs to $H^1(\mathbb R^N)$ if and only if $S^{\{j\}}f\in L^1(\mathbb R^N)$ for all $0 \leq j \leq d$  and there is a constant $C>0$ such that 
\begin{equation*}
    C^{-1} \| f\|_{H^1(\mathbb R^N)} \leq \sum_{j=0}^N \| S^{\{j\}}f\|_{L^1(\mathbb R^N)} \leq C\|f\|_{H^1(\mathbb R^N)}.
\end{equation*}

Here the actions of $S_j$ on $L^1(\mathbb R^N)$-functions are understood in the sense of distribution. 

The aim of this work is to extend the result of Uchiyama to the rational Dunkl setting. To be more precise, 
on $\mathbb R^N$ equipped with a root system $R$ and a  multiplicity function $k\geq 0$,  
 let 
 \begin{equation}\label{eq:measure}dw(\mathbf x)=\prod_{\alpha\in R} |\langle \alpha, \mathbf x\rangle |^{k(\alpha)}\;d\mathbf x \index[Other]{$dw$@$dw$}
 \end{equation}
  denote the associated measure, where here and subsequently $d\mathbf x$ means the Lebesgue measure.  Let $\mathcal Ff$ and $\mathcal F^{-1} f$ stand the Dunkl transform and the inversion of the Dunkl transform of $f$ respectively (see~\eqref{eq:Dunkl_transform}). 
  { Following the classical theory (see Fefferman-Stein \cite{FeSt}), 
  we say that an $L^1(dw)$-function $f$ belongs to the Hardy space $H^1_{\rm Dunkl}$ \index[Other]{$H^1_{\rm Dunkl}$@$H^1_{\rm Dunkl}$ - Hardy space}if the maximal function (built up from the Dunkl-heat semigroup)  }
  \begin{align*}
       \mathcal M f(\mathbf x)=\sup_{\|\mathbf x'-\mathbf x\|^2\leq t} |e^{t\Delta_k}f(\mathbf x')|
  \end{align*}
  belongs to $L^1(dw)$, where $\Delta_k$ is the Dunkl Laplace operator. Then we set 
  \begin{equation*} \| f\|_{H^1_{\rm Dunkl}}=\| \mathcal Mf\|_{L^1(dw)}.
  \end{equation*}
It was proved in \cite{DzHe-JFA} that if $\theta(\xi)$ is a homogeneous of degree zero and  $C^\infty$ away from the origin function on $\mathbb R^N$, then the Dunkl--Fourier multiplier operator 
$$ f\mapsto \mathbf Sf=\mathcal F^{-1} (\theta (\cdot)\mathcal F(\cdot)),$$
initially defined on $L^2(dw)$, has a unique extensions to a bounded operator on $L^p(dw)$ for $1<p<\infty$ and $H^1_{\rm Dunkl}$. Moreover, it is well defined in the sense of distributions on $L^1(dw)$ by the relation 
{
\begin{equation*}\langle \mathbf S f,\boldsymbol \varphi\rangle = \int_{\mathbb R^N} \theta(\xi)\mathcal Ff(\xi)(\mathcal F^{-1} \boldsymbol\varphi)(\xi)\, dw(\xi), \quad \boldsymbol\varphi\in\mathcal S(\mathbb R^N).
\end{equation*}
}

{In this paper, we consider a system $(\theta_0(\xi), \theta_1(\xi), \dots, \theta_d(\xi))$, where $\theta_0(\xi) \equiv 1$, consisting of functions that are smooth away from the origin and homogeneous of degree zero on $\mathbb{R}^N$. The presence of $\theta_0 \equiv 1$ in the system appears natural from the perspective of Hardy space characterizations, as $H^1_{\rm Dunkl} \subset L^1(dw)$. For our systems, condition \eqref{eq:triangle_new} thus takes the form  
\begin{displaymath}\label{eq:triangle}\tag{$\triangle$}
\text{\rm rank} \ 
\left( \begin{array}{ccccc}
1&\theta_1(\xi) & \theta_2(\xi)& \ldots &\theta_d(\xi) \\
1&\theta_1(-\xi) & \theta_2(-\xi)& \ldots &\theta_d(-\xi)
\end{array} \right) =2 \quad \text{for all } \xi\in\mathbb{R}^N, \ \|\xi\|=1.  
\end{displaymath}
}

We are now in a position to state our main result. 

\begin{theorem}\label{teo:main_1}
    Assume that a system $(\theta_0(\xi),\theta_1(\xi),\theta_2(\xi),...,\theta_d(\xi))$, {with $\theta_0(\xi)\equiv 1$},  of complex valued functions on $\mathbb R^N$, which are  $C^\infty$ away from the origin and  homogeneous of degree zero satisfies the condition~\eqref{eq:triangle}. Let $\mathbf S^{\{j\}}$ denote the multiplier operators   $\mathcal F(\mathbf S^{\{j\}}f)(\xi)=\theta_j(\xi)\mathcal F f(\xi)$. Then an $L^1(dw)$-function $f$ belongs to the Hardy space $H^1_{\rm Dunkl}$ if and only if $\mathbf S^{\{j\}}f$ belong to $L^1(dw)$,  $1 \leq j \leq d$. Moreover, 
    there is a constant $C>0$ such that 
    \begin{equation*}
    C^{-1} \| f\|_{H^1_{\rm Dunkl} }\leq \| f\|_{L^1(dw)}+\sum_{j=1}^d\| \mathbf S^{\{j\}}f\|_{L^1(dw)}\leq C \| f\|_{H^1_{\rm Dunkl} }.
    \end{equation*}
\end{theorem}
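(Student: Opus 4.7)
The plan is to deduce Theorem~\ref{teo:main_1} from the Fefferman--Stein decomposition for compactly supported BMO functions (stated in the abstract and proved in the body of the paper), combined with the $H^1_{\rm Dunkl}$--BMO duality on the space of homogeneous type $(\mathbb R^N,\|\cdot\|,dw)$ and the fact, established in \cite{DzHe-JFA}, that every Dunkl--Fourier multiplier with a $C^\infty$, homogeneous-of-degree-zero symbol acts boundedly on $H^1_{\rm Dunkl}$ and on $L^p(dw)$ for $1<p<\infty$.

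The upper bound $\|f\|_{L^1(dw)}+\sum_j\|\mathbf S^{\{j\}}f\|_{L^1(dw)}\le C\|f\|_{H^1_{\rm Dunkl}}$ is immediate: each $\mathbf S^{\{j\}}$ maps $H^1_{\rm Dunkl}$ into $L^1(dw)$ by the cited boundedness, and $H^1_{\rm Dunkl}\hookrightarrow L^1(dw)$ by definition of $\mathcal Mf$. For the converse, assume $f\in L^1(dw)$ with all $\mathbf S^{\{j\}}f\in L^1(dw)$. By the $H^1_{\rm Dunkl}$--BMO duality, it suffices to estimate $|\langle f,\varphi\rangle|$ for $\varphi$ ranging over a dense subclass of BMO, and compactly supported BMO functions are the natural choice. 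Apply the Fefferman--Stein decomposition not to the system $(\theta_1,\dots,\theta_d)$ itself but to $(\overline{\theta_1},\dots,\overline{\theta_d})$, which inherits~\eqref{eq:triangle}; by Plancherel for the Dunkl transform, the $L^2(dw)$-adjoint of $\mathbf S^{\{j\}}$ is the Dunkl multiplier with symbol $\overline{\theta_j}$, so the decomposition furnishes bounded functions $g_0,g_1,\dots,g_d$ such that
\begin{equation*}
\varphi=g_0+\sum_{j=1}^d(\mathbf S^{\{j\}})^*g_j,\qquad \sum_{j=0}^d\|g_j\|_{L^\infty}\le C\|\varphi\|_{\rm BMO}.
\end{equation*}
Transposing,
\begin{equation*}
\langle f,\varphi\rangle=\langle f,g_0\rangle+\sum_{j=1}^d\langle \mathbf S^{\{j\}}f,g_j\rangle,
\end{equation*}
which gives the desired lower bound on $\|f\|_{H^1_{\rm Dunkl}}$ after taking the supremum over $\varphi$.

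The real difficulty therefore lies not in this deduction but in the Fefferman--Stein decomposition itself, which is the main technical contribution of the paper. Adapting Uchiyama's original iterative construction to the Dunkl setting is expected to be delicate: one needs an atomic (or molecular) characterization of $H^1_{\rm Dunkl}$ on $(\mathbb R^N,\|\cdot\|,dw)$, Calder\'on--Zygmund-type kernel bounds for the operators $\mathbf S^{\{j\}}$ (with cancellation expressed in the Dunkl transform), and careful bookkeeping to accommodate the fact that the Dunkl translation is not a classical convolution while the BMO metric is the ordinary Euclidean distance. Once the decomposition is available in this setting, Theorem~\ref{teo:main_1} follows from the short duality argument sketched above.
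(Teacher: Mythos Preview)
Your overall strategy matches the paper's: deduce Theorem~\ref{teo:main_1} from the Fefferman--Stein decomposition of compactly supported ${\rm BMO}(\mathbf X)$ functions (Theorem~\ref{UchiyamaDecomposition0}) via duality, with the easy direction coming from \cite{DzHe-JFA}. However, two points in your sketch need correction.

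First, the duality you invoke is the wrong one. You write that it suffices to test $f$ against ``a dense subclass of BMO,'' taking compactly supported functions. But compactly supported functions are \emph{not} dense in ${\rm BMO}(\mathbf X)$, and in any case boundedness of $\varphi\mapsto\langle f,\varphi\rangle$ on ${\rm BMO}$ would not place $f$ in $H^1_{\rm Dunkl}$, since $H^1$ is not reflexive. The paper instead uses the predual identification $({\rm VMO}(\mathbf X))^*=H^1_{\rm Dunkl}$ from \cite{CW}: $C_c(\mathbb R^N)$ is dense in ${\rm VMO}(\mathbf X)$ by definition, so one shows that the functional $\Phi(\varphi)=\int f\varphi\,dw$ on $C_c$ satisfies $|\Phi(\varphi)|\le C\|\varphi\|_{\rm BMO}\sum_{j=0}^d\|\mathbf S^{\{j\}}f\|_{L^1(dw)}$, extends it to ${\rm VMO}$, and then identifies the representing $H^1_{\rm Dunkl}$-element with $f$.

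Second, the ``transposition'' $\langle f,(\mathbf S^{\{j\}})^*g_j\rangle=\langle\mathbf S^{\{j\}}f,g_j\rangle$ requires justification: $f$ is only in $L^1(dw)$, $\mathbf S^{\{j\}}f$ is defined distributionally, and $g_j$ is merely $L^\infty\cap L^2(dw)$. The paper handles this by regularizing with the heat semigroup, writing $\Phi(\varphi)=\lim_{t\to0}\int\varphi\,(f*h_t)\,dw$, using that $f*h_t\in L^2(dw)$ so the $L^2$-adjoint relation applies legitimately, then invoking $\mathbf S^{\{j\}}(f*h_t)=(\mathbf S^{\{j\}}f)*h_t\to\mathbf S^{\{j\}}f$ in $L^1(dw)$. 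Also note that Theorem~\ref{UchiyamaDecomposition0} is already stated with $\mathbf S^{\{j\}*}$, so passing to the conjugate symbols is unnecessary.
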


The characterization of $H^1_{\rm Dunkl}$ by a special system of multiplier operators, namely by $({\rm Id},\mathbf R_1,\dots, \mathbf R_N)$, where $\mathbf R_j$,  $j=1,\dots,N$, are the Dunkl-Riesz transforms,  which corresponds to the system the Dunkl multipliers: 
$$ \Big(1,-\frac{i\xi_1}{\|\xi\|},\dots, -\frac{i\xi_N}{\|\xi\|}\Big)$$
 was  proved in \cite{ADH-JFAA}. To this end, the authors of~\cite{ADH-JFAA} first proved  $\Delta_k$-subharmonicity of a function built from a vector of functions satisfying  generalized Cauchy--Riemann equations for the Dunkl operators using an action of the reflection group. This was a significant step in the proof.  Then, they adapted to the Dunkl setting  methods of \cite{FeSt} and \cite{Stein-Weiss} to complete the proof of the Dunkl-Riesz transform characterization of $H^1_{\rm Dunkl}$.   Since the space $H^1_{\rm Dunkl}$ admits  decomposition into Coifman-Weiss type atoms (see \cite{DH-Studia}), its dual can be identified with the space of ${\rm BMO}(\mathbf{X})$  functions on the space of  homogeneous type ${\mathbf{X}}=(\mathbb R^N,\|\mathbf x-\mathbf y\|, dw)$ (see Section \ref{sec:BMO}). Consequently, as a corollary of the  characterization of the Hardy space $H^1_{\rm Dunkl}$ by the Dunkl-Riesz transforms, one  obtains an analogue of the Fefferman--Stein decomposition (see \cite{FeSt}), namely, every ${\rm BMO}(\mathbf{X})$-function $f$ can be decomposed into  
$$ f=g_0+\sum_{j=1}^NR_j^*g_j,  
\text{   with   } \sum_{j=0}^N \| g_j\|_{L^\infty}\leq C\| f\|_{{\rm BMO}(\mathbf{X})}.$$

{The proof of the characterization of the classical $H^1(\mathbb R^N)$ spaces by the systems of singular integrals presented in~\cite{Uchiyama} does not rely on any variant of subharmonic majorization approach, but on complex, carefully considered, multi-step inductive constructions of various functions.}
So,   to prove Theorem \ref{teo:main_1} we apply the  ideas of Uchiyama \cite{Uchiyama} which in some sense inverse the order. First we give a detailed and self-contained proof of a counterpart   of the Fefferman-Stein decomposition of compactly supported ${\rm BMO}(\mathbf{X})$ functions which is stated below as  our  second main result.

\begin{theorem}\label{UchiyamaDecomposition0} Assume that a system $( \theta_0(\xi),  \theta_1(\xi),\theta_2(\xi),...,\theta_d(\xi))$, with $\theta_0(\xi)\equiv 1$, of homogeneous of degree zero and smooth away from the origin multipliers  satisfies ~\eqref{eq:triangle}.  Let $\mathbf S^{\{j\}}$ and $\mathbf S^{\{j\}*}$ denote the Fourier-Dunkl  multiplier operators   $\mathcal F(\mathbf S^{\{j\}}f)(\xi)=\theta_j(\xi)\mathcal F f(\xi)$, $\mathcal F(\mathbf S^{\{j\}*}f)(\xi)=\overline{\theta_j(\xi)}\mathcal F f(\xi)$ respectively.  Then there is a constant $C>0$ such that any  compactly supported ${\rm BMO}(\mathbf{X})$-function $f$ can be written as
\begin{equation*}  f= \boldsymbol g_0+\sum_{j=1}^d \mathbf S^{\{j\}*} \boldsymbol g_j ,
\end{equation*}
with
\begin{equation*}
\sum_{j=0}^d \| \boldsymbol g_j\|_{\infty} \leq C\| f\|_{{\rm BMO}(\mathbf{X})},
\end{equation*}
\begin{equation*}\sum_{j=0}^d \| \boldsymbol g_j\|_{L^2(dw)}<\infty.
\end{equation*}
\end{theorem}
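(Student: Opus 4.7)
My plan is to adapt the constructive argument of Uchiyama \cite{Uchiyama} to the Dunkl setting. The overall strategy is an iterative scheme based on the following one-step reduction: given a compactly supported $f \in \mathrm{BMO}(\mathbf{X})$, I construct bounded functions $g_0^{(1)}, g_1^{(1)}, \ldots, g_d^{(1)}$ with $\sum_{j=0}^d \|g_j^{(1)}\|_\infty \leq C \|f\|_{\mathrm{BMO}(\mathbf{X})}$ such that
\[
\Big\| f - g_0^{(1)} - \sum_{j=1}^d \mathbf{S}^{\{j\}*} g_j^{(1)} \Big\|_{\mathrm{BMO}(\mathbf{X})} \leq (1-\eta) \|f\|_{\mathrm{BMO}(\mathbf{X})}
\]
for some fixed $\eta \in (0,1)$ independent of $f$. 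Iterating and summing the resulting geometric series yields $g_j = \sum_n g_j^{(n)}$ with $\|g_j\|_\infty \leq (C/\eta)\|f\|_{\mathrm{BMO}(\mathbf{X})}$, while the tail of the iteration vanishes in BMO.

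The one-step reduction rests on two ingredients. The first is Uchiyama's algebraic lemma arising from \eqref{eq:triangle}: the rank-two hypothesis permits the construction of smooth functions $a_j(\xi), b_j(\xi)$ on the unit sphere that serve as a pseudo-inverse of the multiplier system, so that $\sum_j a_j(\xi) \theta_j(\pm\xi)$ and $\sum_j b_j(\xi) \theta_j(\pm\xi)$ can be prescribed independently. The second is a phase-space decomposition of $f$, combining a Calderón--Zygmund-type stopping-time decomposition on the space of homogeneous type $\mathbf{X}$ with a Dunkl--Littlewood--Paley localization $P_k = \mathcal{F}^{-1}(\phi(2^{-k}\cdot)\mathcal{F}\,\cdot)$ to a dyadic frequency shell. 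The correctors $g_j^{(1)}$ are then built by pairing the bad cubes of the stopping-time decomposition with $a_j, b_j$, suitably cut off. The $L^2(dw)$-integrability of the final $g_j$ follows from the compact support of $f$ (which places $f$ in $L^2(dw)$ by John--Nirenberg on $\mathbf{X}$) together with the $L^2(dw)$-boundedness of $\mathbf{S}^{\{j\}*}$ from \cite{DzHe-JFA} and control on the supports of the correctors at each iteration.

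The main obstacle is porting Uchiyama's detailed kernel manipulations to the Dunkl-convolution framework. The operators $\mathbf{S}^{\{j\}*}$ are not Euclidean convolutions but involve the Dunkl translation, which is not positivity-preserving, so the off-diagonal decay of kernels and the Carleson-type estimates required in the iteration demand the Dunkl-adapted bounds developed in \cite{DzHe-JFA}. The $H^1_{\mathrm{Dunkl}}$--$\mathrm{BMO}(\mathbf{X})$ duality via Coifman--Weiss atoms from \cite{DH-Studia} provides the bridge between BMO-norm control and atomic decompositions needed to iterate. With these Dunkl-adapted tools in place, the iteration proceeds essentially in parallel with Uchiyama's original argument, each classical step being replaced by its Dunkl counterpart.
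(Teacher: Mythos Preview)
Your high-level iterative framework matches the paper's: Theorem~\ref{UchiyamaDecomposition0} is indeed deduced by iterating a one-step reduction (the paper's Theorem~\ref{KeyTheorem}), with the BMO norm of the remainder contracting geometrically and its support growing by a fixed dilation factor at each step; the $L^2(dw)$ control you sketch is also along the right lines.

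The genuine gap is in the one-step reduction itself, which is where all the work lies. The mechanism is \emph{not} a Calder\'on--Zygmund stopping-time argument on ``bad cubes.'' In the paper (following Uchiyama and Christ--Geller), $f$ is first expanded by a Chang--Fefferman decomposition $f=\sum_Q\lambda_Q a_Q$ coming from a discrete Calder\'on reproducing formula, and then one runs an \emph{inner} induction over dyadic scales $\ell=0,1,2,\dots$ to build a sequence of unit-vector-valued functions $\overrightarrow{g_\ell}(\mathbf{x})\in\mathbb{C}^d$. At scale $\ell$, for each $Q\in\mathcal{Q}_{2^{-\ell}}$ the algebraic lemma from \eqref{eq:triangle} (Proposition~\ref{propo:real_complex}, Lemma~\ref{cor:b_Q_ort}) is used to produce $\overrightarrow{b_Q}$ with $\overrightarrow{S^*}\circ\overrightarrow{b_Q}=a_Q$ \emph{and} $\overrightarrow{b_Q}(\mathbf{x})$ pointwise orthogonal (in $\mathbb{R}^{2d}$) to the current direction $\overrightarrow{g_{\ell-1}}(\mathbf{z}_Q)$. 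This orthogonality is the essential point your proposal omits: it forces the renormalization error $\overrightarrow{E_\ell}=\overrightarrow{g_\ell}-\overrightarrow{g_{\ell-1}}-\overrightarrow{h_\ell}$ to be \emph{quadratic}, $\|\overrightarrow{E_\ell}(\mathbf{x})\|\lesssim\boldsymbol\tau_\ell(\mathbf{x})\boldsymbol\sigma_\ell(\mathbf{x})$, and hence $\|f_1\|_{\mathrm{BMO}(\mathbf X)}\leq A\varepsilon^2$ when $\|f\|_{\mathrm{BMO}(\mathbf X)}=\varepsilon$ is small. Without this orthogonality trick, a naive pairing of the pieces $a_Q$ with the algebraic pseudo-inverse yields bounded $g_j$ but a remainder of the \emph{same} BMO size as $f$, so no contraction $(1-\eta)$ is available and the iteration does not close. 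Your ``stopping-time'' and ``bad cubes'' description does not supply this quadratic mechanism; you would need to articulate how the running direction $\overrightarrow{g_{\ell-1}}$ is updated scale by scale and why the resulting error is second order.
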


Then Theorem~\ref{teo:main_1} follows from Theorem~\ref{UchiyamaDecomposition0} by a duality argument. 

It is worth to emphasize that the ideas of Uchiyama were adapted in Christ-Geller \cite{CHG} to characterize Hardy spaces on homogeneous nilpotent Lie groups  by relevant Riesz transforms  and then  extended, by transference methods,  to  Hardy spaces associated with some Grushin operators \cite{JFAA}.  The present paper owes a lot to~\cite{CHG}. 

{One of the key elements of the proof of Theorem \ref{UchiyamaDecomposition0} is the use of the methods developed by the authors  in~\cite{DH-heat}, \cite{DzHe-JFA},  and~\cite{DH-JFAA}, where, among other results, upper and lower estimates for the Dunkl heat kernel  were derived, as well as estimates for Dunkl translations of certain non-radial functions. We utilize the methods  for obtaining regularity and bounds  for the Dunkl translations of the generalized  convolutions of non-radial kernels. These estimates are expressed by means of volumes of balls, the Euclidean distances and distances of orbits.}

For relations concerning the dual spaces to the Hardy spaces $H^1_{\rm Dunkl}$ with the Dunkl-Riesz transforms, Dunkl-Poisson integrals, and Carleson measures we refer the reader to~\cite{HLLW,Han_et_al,JL}.

{
The condition \eqref{eq:triangle_new} is also necessary for the system of Fourier multiplier operators to characterize the classical Hardy space $H^1(\mathbb R^N)$. A proof of this, which uses the product character of the classical Fourier transform, was given by Janson \cite{Janson}. An interesting question is whether this is a necessary condition in the Dunkl context.   } 

\section{Preliminaries}

In this section we present basic facts concerning the theory of the Dunkl operators.   For more details we refer the reader to~\cite{Dunkl},~\cite{Roesle99},~\cite{Roesler3}, and~\cite{Roesler-Voit}. 

We consider the Euclidean space $\mathbb R^N$ with the scalar product $\langle\mathbf x,\mathbf y\rangle=\sum_{j=1}^N x_jy_j
$, where $\mathbf x=(x_1,...,x_N)$, $\mathbf y=(y_1,...,y_N)$, and the norm $\| \mathbf x\|^2=\langle \mathbf x,\mathbf x\rangle$. For a nonzero vector $\alpha\in\mathbb R^N$,  the reflection $\sigma_\alpha$ with respect to the hyperplane $\alpha^\perp$ orthogonal to $\alpha$ is given by
\begin{align*}
\sigma_\alpha (\mathbf x)=\mathbf x-2\frac{\langle \mathbf x,\alpha\rangle}{\| \alpha\| ^2}\alpha.\index[Other]{$\sigma_\alpha$@$\sigma_\alpha$}
\end{align*}
In this paper we fix a normalized root system in $\mathbb R^N$, that is, a finite set  $R\subset \mathbb R^N\setminus\{0\}$ such that   $\sigma_\alpha (R)=R$ and $\|\alpha\|=\sqrt{2}$ for all $\alpha\in R$. The finite group $G$\index[Other]{$G$@$G$ - reflection group} generated by the reflections $\sigma_\alpha$, $\alpha \in R$,  is called the {\it Weyl group} ({\it reflection group}) of the root system. A~{\textit{multiplicity function}} is a $G$-invariant function $k:R\to\mathbb C$\index[Other]{$k$@$k=k(\alpha)$ - multiplicity function} which will be fixed and $\geq 0$  throughout this paper. We denote by $\mathbf N=N+\sum_{\alpha \in R} k(\alpha)$ \index[Other]{$\mathb N$@$\mathbf N$} the homogeneous dimension of the system. Clearly, 

\begin{align*}
\int_{\mathbb R^N} f(\mathbf x)\, dw(\mathbf x)=\int_{\mathbb R^N} t^{-\mathbf N} f(\mathbf x\slash t)\, dw(\mathbf x)\ \ \text{for all} \ f\in L^1(dw)  \   \text{\rm and} \  t>0,
\end{align*}
\begin{equation}\label{eq:scaling_ball} w(B(t\mathbf x, tr))=t^{\mathbf N}w(B(\mathbf x,r)) \ \ \text{\rm for all } \mathbf x\in\mathbb R^N, \ t,r>0,  
\end{equation}
 where the measure $w$ is defined in \eqref{eq:measure} and $B(\mathbf x, r)=\{\mathbf y\in\mathbb R^N: \|\mathbf y-\mathbf x\|<r\}$. Further,  there is a constant $C>0$ such that 
\begin{align*} C^{-1}w(B(\mathbf x,r))\leq  r^{N}\prod_{\alpha \in R} (|\langle \mathbf x,\alpha\rangle |+r)^{k(\alpha)}\leq C w(B(\mathbf x,r)).
\end{align*}
Consequently,  there exists a constant $C\ge1$ such that,
for every $\mathbf{x}\in\mathbb{R}^N$ and for all $r_2\ge r_1>0$,
\begin{equation}\label{eq:growth}
C^{-1}\Big(\frac{r_2}{r_1}\Big)^{N}\leq\frac{{w}(B(\mathbf{x},r_2))}{{w}(B(\mathbf{x},r_1))}\leq C \Big(\frac{r_2}{r_1}\Big)^{\mathbf{N}},
\end{equation}
so $dw(\mathbf x)$ is doubling, that is, there is a constant $C>0$ such that
\begin{equation}\label{eq:doubling} w(B(\mathbf x,2r))\leq C w(B(\mathbf x,r)) \ \ \text{ for all } \mathbf x\in\mathbb R^N, \ r>0.
\end{equation}
Thus $\mathbf X=(\mathbb R^N, \|\mathbf x-\mathbf y\|, dw)$ is a space of homogeneous type in the sense of Coifman-Weiss \cite{CW}. 

For $\xi \in \mathbb{R}^N$, the {\it Dunkl operators} $T_\xi$  are the following $k$-deformations of the directional derivatives $\partial_\xi$ by a  difference operator:
\begin{align*}
     T_\xi f(\mathbf x)= \partial_\xi f(\mathbf x) + \sum_{\alpha\in R} \frac{k(\alpha)}{2}\langle\alpha ,\xi\rangle\frac{f(\mathbf x)-f(\sigma_\alpha \mathbf x)}{\langle \alpha,\mathbf x\rangle}.\index[Other]{$T_{\xi}$@$T_{\xi}$ - Dunkl operator}
\end{align*}
The Dunkl operators $T_{\xi}$, which were introduced in~\cite{Dunkl}, commute and are skew-symmetric with respect to the $G$-invariant measure $dw$.
 Suppose that $\xi\ne 0$, $f,g \in C^1(\mathbb{R}^N)$ and $g$ is $G$-invariant. Then the following Leibniz rule can be confirmed by a direct calculation:
\begin{align*}
T_{\xi}(f g)=f(T_\xi g)+g(T_\xi f). 
\end{align*}
For fixed $\mathbf y\in\mathbb R^N$ the {\it Dunkl kernel} $E(\mathbf x,\mathbf y)$ is a unique analytic solution to the system
\begin{equation*}
    T_\xi f=\langle \xi,\mathbf y\rangle f, \ \ f(0)=1.
\end{equation*}
The function $E(\mathbf x ,\mathbf y)$, which generalizes the exponential  function $e^{\langle \mathbf x,\mathbf y\rangle}$, has a unique extension to a holomorphic function on $\mathbb C^N\times \mathbb C^N$. Moreover, 
\begin{equation}\label{eq:conjE} E(\mathbf{x},\mathbf{y})=E(\mathbf{y},\mathbf{x}) \text{ and }\overline{E(\mathbf{x},\mathbf{y})}=E(\bar{\mathbf{x}},\bar{\mathbf{y}}) \text{ for all }\mathbf{x},\mathbf{y} \in \mathbb{C}^N.
 \end{equation}

Let $\{e_j\}_{1 \leq j \leq N}$ denote the canonical orthonormal basis in $\mathbb R^N$ and let $T_j=T_{e_j}$\index[Other]{$T_{j}$@$T_{j}$ - Dunkl operator}. For multi-index $I=(\beta_1,\beta_2,\ldots,\beta_N)  \in\mathbb N_0^N$, we set 
$$ |I|=\beta_1+\beta_2 +\ldots +\beta_N,\index[Other]{$|I|$@$|I|$}$$
$$\partial^{I}=\partial_1^{\beta_1} \circ \partial_2^{\beta_2}\circ \ldots \circ \partial_N^{\beta_N},\index[Other]{$\partial^{I}$@$\partial^{I}$}$$
$$ T^I = T_1^{\beta_1}\circ T_2^{\beta_2}\circ \ldots \circ T_N^{\beta_N}.\index[Other]{$T^{I}$@$T^{I}$}$$

The \textit{Dunkl  (Fourier-Dunkl) transform}  
  \begin{equation}\label{eq:Dunkl_transform}
  \mathcal F f(\xi)=c_k^{-1}\int_{\mathbb R^N} E(-i\xi, \mathbf x)f(\mathbf x)\, dw(\mathbf x),\index[Other]{$\mathcal{F}$@$\mathcal{F}$ - Dunkl transform}
  \end{equation}
  where
  $$c_k=\int_{\mathbb{R}^N}e^{-\frac{\|\mathbf{x}\|^2}{2}}\,dw(\mathbf{x})>0,$$
   originally defined for $f\in L^1(dw)$, is an isometry on $L^2(dw)$, i.e.,
   \begin{equation}\label{eq:Plancherel}
       \|f\|_{L^2(dw)}=\|\mathcal{F}f\|_{L^2(dw)} \text{ for all }f \in L^2(dw),
   \end{equation}
and preserves the Schwartz class of functions $\mathcal S(\mathbb R^N)$ (\cite{D5}, see also \cite{dJ1}). {It generalizes the Fourier transform.}  Its inverse $\mathcal F^{-1}$ has the form
  \begin{align}\label{eq:inverse} \mathcal F^{-1} g(\mathbf{x})=c_k^{-1}\int_{\mathbb R^N} E(i\xi, \mathbf x)g(\xi)\, dw(\xi).\index[Other]{$\mathcal{F}^{-1}$@$\mathcal{F}^{-1}$ - inverse Dunkl transform}
  \end{align}
{  The inversion formula
\begin{align*}
    f(\mathbf{x})=c_k^{-1}\int_{\mathbb{R}^N} \mathcal F f(\xi)E(i\xi,\mathbf{x})\, dw(\xi)
\end{align*}
  remains valid if $f,\, \mathcal Ff\in L^1(dw)$ (see~\cite[Theorem 4.2]{dJ1}). 
  }

 The {\it generalized (Dunkl) translation\/} $\tau_{\mathbf{x}}f$\index[Other]{$\tau_{\mathbf{x}}$@$\tau_{\mathbf{x}}$ - generalized translation} of a function $f\in\mathcal{S}(\mathbb{R}^N)$ by $\mathbf{x}\in\mathbb{R}^N$ is defined by
\begin{align}\label{eq:transl}
\tau_{\mathbf{x}} f(\mathbf{y})=c_k^{-1} \int_{\mathbb{R}^N}{E}(i\xi,\mathbf{x})\,{E}(i\xi,\mathbf{y})\,\mathcal{F}f(\xi)\,{dw}(\xi).
\end{align}
  It is a contraction on $L^2(dw)$, however it is an open  problem  if the Dunkl translations are bounded operators on $L^p(dw)$ for $p\ne 2$.

{{ The following specific formula  for the Dunkl translations of (reasonable) radial functions $f({\mathbf{x}})=\tilde{f}({\|\mathbf{x}\|})$ was obtained by R\"osler \cite{Roesler2003}:
\begin{equation}\label{eq:translation-radial}
\tau_{\mathbf{x}}f(-\mathbf{y})=\int_{\mathbb{R}^N}{(\tilde{f}\circ A)}(\mathbf{x},\mathbf{y},\eta)\,d\mu_{\mathbf{x}}(\eta)\text{ for }\mathbf{x},\mathbf{y}\in\mathbb{R}^N.
\end{equation}
Here
\begin{equation*}
A(\mathbf{x},\mathbf{y},\eta)=\sqrt{{\|}\mathbf{x}{\|}^2+{\|}\mathbf{y}{\|}^2-2\langle \mathbf{y},\eta\rangle}=\sqrt{{\|}\mathbf{x}{\|}^2-{\|}\eta{\|}^2+{\|}\mathbf{y}-\eta{\|}^2}
\end{equation*}
and $\mu_{\mathbf{x}}$ is a probability measure, 
which is supported in the set $\operatorname{conv}\mathcal{O}(\mathbf{x})$,  where $\mathcal O(\mathbf x) =\{\sigma(\mathbf x): \sigma \in G\}$ is the orbit of $\mathbf x$. 
}}
{

Formula~\eqref{eq:translation-radial} implies that for all radial $f \in L^1(dw)$ and $\mathbf{x} \in \mathbb{R}^N$ we have
\begin{align*}
    \|\tau_{\mathbf{x}}f\|_{L^1(dw)} \leq \|f\|_{L^1(dw)}. 
\end{align*}
Let 
$$d(\mathbf x,\mathbf y)=\min_{\sigma\in G}\| \sigma(\mathbf x)-\mathbf y\|\index[Other]{$d(\mathbf x,\mathbf y)$@$d(\mathbf x,\mathbf y)$ - distance of orbits}$$
be the distance of the orbit of $\mathbf x$ to the orbit of $\mathbf y$.
It was proved in  \cite[Theorem 1.7]{DzHe-JFA} that if $f\in L^2(dw)$ and 
  $\text{\rm supp}\, f\subseteq B(0, r)$ , then 
\begin{equation}\label{eq:supp_transl}
    \tau_{\mathbf x}f(-\mathbf y)=0 \quad \text{\rm for } d(\mathbf x,\mathbf y)>r.
\end{equation}
 }

  {The \textit{generalized (Dunkl) convolution\/} $f*g$ \index[Other]{$*$@$*$ - generalized convolution} of two reasonable functions $f$ and $g$ (for instance Schwartz functions) is defined by
$$
(f*g)(\mathbf{x})=c_k\,\mathcal{F}^{-1}[(\mathcal{F}f)(\mathcal{F}g)](\mathbf{x})=\int_{\mathbb{R}^N}(\mathcal{F}f)(\xi)\,(\mathcal{F}g)(\xi)\,E(\mathbf{x},i\xi)\,dw(\xi) \text{ for }\mathbf{x}\in\mathbb{R}^N,
$$
or, equivalently, by}
\begin{align*}
  {(f*g)(\mathbf{x})=\int_{\mathbb{R}^N}f(\mathbf{y})\,\tau_{\mathbf{x}}g(-\mathbf{y})\,{dw}(\mathbf{y})=\int_{\mathbb R^N} f(\mathbf y)g(\mathbf x,\mathbf y) \,dw(\mathbf{y}) \text{ for all } \mathbf{x}\in\mathbb{R}^N},  
\end{align*}
where, here and subsequently, $g(\mathbf x,\mathbf y)=\tau_{\mathbf x}g(-\mathbf y)$. 

{We shall use the notation $f^{*2}=f*f$ and, inductively, $f^{*{(n+1)}}= f^{*n}*f$.\index[Other]{$f^{*n}$@$f^{*n}$}} 

Unless otherwise defined, for a function $f$, we denote: 
$$f(\mathbf x,\mathbf y)=\tau_{\mathbf{x}}f(-\mathbf{y}),\quad  f_t(\mathbf x)=t^{-\mathbf N} f(\mathbf x/t), \quad f_t(\mathbf x,\mathbf y)=\tau_{\mathbf x}(f_t)(-\mathbf y)\index[Other]{$f_t$@$f_t$}.$$ 

The {\it Dunkl Laplacian} associated with $R$ and $k$  is the differential-difference operator $\Delta_k=\sum_{j=1}^N T_{j}^2$,\index[Other]{$ \Delta_k$@$ \Delta_k$ - Dunkl Laplacian} which  acts on $C^2(\mathbb{R}^N)$-functions by

\begin{align*}
    \Delta_k f(\mathbf x)=\Delta_{\rm eucl} f(\mathbf x)+\sum_{\alpha\in R} k(\alpha) \delta_\alpha f(\mathbf x),
\end{align*}
\begin{align*}
    \delta_\alpha f(\mathbf x)=\frac{\partial_\alpha f(\mathbf x)}{\langle \alpha , \mathbf x\rangle} - \frac{\|\alpha\|^2}{2} \frac{f(\mathbf x)-f(\sigma_\alpha \mathbf x)}{\langle \alpha, \mathbf x\rangle^2}.
\end{align*}
Obviously, $\mathcal F(\Delta_k f)(\xi)=-\| \xi\|^2\mathcal Ff(\xi)$. The operator $\Delta_k$ is essentially self-adjoint on $L^2(dw)$ (see for instance \cite[Theorem\;3.1]{AH}) and generates the semigroup $e^{t\Delta_k}$  of linear self-adjoint contractions on $L^2(dw)$. The semigroup has the form
  \begin{align*}
  e^{t\Delta_k} f(\mathbf x)=\mathcal F^{-1}(e^{-t\|\xi\|^2}\mathcal Ff(\xi))(\mathbf x)=\int_{\mathbb R^N} h_t(\mathbf x,\mathbf y)f(\mathbf y)\, dw(\mathbf y),
  \end{align*}
  where the heat kernel 
  \begin{equation*}
      h_t(\mathbf x,\mathbf y)=\tau_{\mathbf x}h_t(-\mathbf y), \ \ h_t(\mathbf x)={ c_k^{-1}}\mathcal F^{-1} (e^{-t\|\xi\|^2})(\mathbf x)=c_k^{-1} (2t)^{-\mathbf N\slash 2}e^{-\| \mathbf x\|^2\slash (4t)}\index[Other]{$h_t(\mathbf{x}),\mathbf{y}$@$h_t(\mathbf{x},\mathbf{y})$ - Dunkl heat kernel}
  \end{equation*}
  is a $C^\infty$-function of all variables $\mathbf x,\mathbf y \in \mathbb{R}^N$, $t>0$, and satisfies \begin{align*} 0<h_t(\mathbf x,\mathbf y)=h_t(\mathbf y,\mathbf x),
  \end{align*}
 \begin{align*} \int_{\mathbb R^N} h_t(\mathbf x,\mathbf y)\, dw(\mathbf y)=1.
 \end{align*}
Let us note that for $f\in L^1(dw)$ such that $\mathcal Ff\in L^1(dw)$, the translation $\tau_{\mathbf{x}} f:\mathbb{R}^N \longmapsto \mathbb{C}$ is a bounded function and 
\begin{equation*}\label{eq:almost_preserv}
\begin{split}
   & \lim_{t\to 0} \int_{\mathbb{R}^N} \tau_{\mathbf x}f(\mathbf y)e^{-t\|\mathbf y\|^2}\, dw({\mathbf{y}}) 
    =\lim_{t\to 0} c_k^{-1} \int_{\mathbb R^N}\int_{\mathbb R^N}E(i\xi,\mathbf x)E(i\xi,\mathbf y)\mathcal Ff(\xi)\, e^{-t\|\mathbf y\|^2} dw(\xi)\, dw(\mathbf y)\\
    &= \lim_{t\to 0} c_k\int_{\mathbb R^N} E(i\xi, \mathbf x)\mathcal Ff(\xi)h_t(\xi)\, dw(\xi) = c_kE(0,\mathbf x)\mathcal Ff(0)
    =\int_{\mathbb{R}^N} f(\mathbf y)\, dw(\mathbf y).
\end{split} \end{equation*}
Hence, {if $f,\mathcal Ff, \tau_{\mathbf x}f\in L^1(dw)$, then the integral of $\tau_{\mathbf x} f$ is preserved, that is, 
\begin{equation}
    \label{eq:int-preserv} 
    \int_{\mathbb R^N} \tau_{\mathbf x}f(\mathbf y)\, dw(\mathbf y)=\int_{\mathbb R^N} f(\mathbf y)\, dw(\mathbf y). 
\end{equation}
}

Set
$$V(\mathbf x,\mathbf y,t):=\max (w(B(\mathbf x,t)),w(B(\mathbf y, t))).$$
{Observe that for any $\varepsilon >0$ there is a constant $C>0$ such that for any cube $Q\subset \mathbb R^N$ 
\begin{equation}\label{eq:finite_integral}
\frac{1}{w(Q)} \int_{\mathbb{R}^N} \Big(1+\frac{d(\mathbf x,\mathbf z_Q)}{\ell(Q)}\Big)^{-\mathbf N-\varepsilon} \, dw(\mathbf x)\leq C,    
\end{equation}
where $\mathbf z_Q$\index[Other]{$\mathbf z_Q$@$\mathbf z_Q$}\index[Other]{$ \ell(Q)$@$ \ell(Q)$} denotes the center of $Q$ and $\ell(Q)$ is its side-length. }

In Theorem \ref{theorem:heat} we state  estimates for $h_t(\mathbf x,\mathbf y)$.  The estimate \eqref{eq:Gauss} was announced by W. Hebisch (with an outline of a proof which used a Poincar\'e inequality).  An elementary and complete  two-step proof of \eqref{eq:Gauss} and \eqref{eq:Gauss-Lipschitz}, which is based on \eqref{eq:translation-radial},  can be found in ~\cite[Theorem 4.1]{ADH-JFAA} and  ~\cite[Theorem 3.1]{DH-Studia}. For more precise upper and lower bounds for $h_t(\mathbf x,\mathbf y)$ we refer to \cite{DH-heat}. 
\begin{theorem}\label{theorem:heat}
There are constants $C,c>0$ such that for all $\mathbf{x},\mathbf{y} \in \mathbb{R}^N$ and $t>0$ we have
\begin{equation}\label{eq:Gauss}
h_t(\mathbf{x},\mathbf{y}) \leq C\Big(1+\frac{\|\mathbf{x}-\mathbf{y}\|}{\sqrt{t}}\Big)^{-2}\,V(\mathbf{x},\mathbf{y},\!\sqrt{t\,})^{-1}\,e^{-\hspace{.25mm}c\hspace{.5mm}d(\mathbf{x},\mathbf{y})^2\slash t}.
\end{equation}
Moreover, if $\|\mathbf{y}-\mathbf{y}'\| \leq \sqrt{t}$, then
\begin{equation}\label{eq:Gauss-Lipschitz}
|h_t(\mathbf{x},\mathbf{y})-h_t(\mathbf{x},\mathbf{y}')| \leq C\frac{\|\mathbf{y}-\mathbf{y}'\|}{\sqrt{t}}\Big(1+\frac{\|\mathbf{x}-\mathbf{y}\|}{\sqrt{t}}\Big)^{-2}\,V(\mathbf{x},\mathbf{y},\!\sqrt{t\,})^{-1}\,e^{-\hspace{.25mm}c\hspace{.5mm}d(\mathbf{x},\mathbf{y})^2\slash t}.
\end{equation}

\end{theorem}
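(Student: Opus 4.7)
My plan is to deduce both \eqref{eq:Gauss} and \eqref{eq:Gauss-Lipschitz} from R\"osler's formula \eqref{eq:translation-radial} applied to the radial Gaussian $h_t(\mathbf z)=c_k^{-1}(2t)^{-\mathbf N/2}e^{-\|\mathbf z\|^2/(4t)}$, which gives
\begin{equation*}
h_t(\mathbf x,\mathbf y)=c_k^{-1}(2t)^{-\mathbf N/2}\int_{\mathbb R^N} e^{-A(\mathbf x,\mathbf y,\eta)^2/(4t)}\,d\mu_{\mathbf x}(\eta).
\end{equation*}
Everything hinges on two pointwise lower bounds for $A$ on $\operatorname{supp}\mu_{\mathbf x}\subset\operatorname{conv}\mathcal O(\mathbf x)$. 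Since $\eta$ is a convex combination of reflections $\sigma(\mathbf x)$ of norm $\|\mathbf x\|$, one has $\|\eta\|\le\|\mathbf x\|$; combined with convexity of $\mathbf z\mapsto\|\mathbf z-\mathbf y\|^2$ and the identity $A^2=\|\mathbf x\|^2-\|\eta\|^2+\|\mathbf y-\eta\|^2$, this yields $A\ge d(\mathbf x,\mathbf y)$, while Cauchy--Schwarz applied to $\langle\mathbf y,\eta\rangle$ in $A^2=\|\mathbf x\|^2+\|\mathbf y\|^2-2\langle\mathbf y,\eta\rangle$ yields $A\ge|\|\mathbf x\|-\|\mathbf y\||$.

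Plugging the first lower bound into R\"osler's integral and using that $\mu_{\mathbf x}$ is a probability measure gives the preliminary bound $h_t(\mathbf x,\mathbf y)\le C t^{-\mathbf N/2}e^{-cd(\mathbf x,\mathbf y)^2/t}$. To insert the factor $(1+\|\mathbf x-\mathbf y\|/\sqrt t)^{-2}$, I split $e^{-A^2/(4t)}=e^{-A^2/(8t)}e^{-A^2/(8t)}$ and apply the second lower bound together with $e^{-s^2}\lesssim(1+s)^{-2}$ to one half, producing a factor $(1+|\|\mathbf x\|-\|\mathbf y\||/\sqrt t)^{-2}$. In the complementary regime $\|\mathbf x-\mathbf y\|\gg|\|\mathbf x\|-\|\mathbf y\||$ one forces $d(\mathbf x,\mathbf y)\gtrsim\|\mathbf x-\mathbf y\|$, so the polynomial decay is absorbed into the exponential with a worse constant.

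The main obstacle is upgrading the naive prefactor $t^{-\mathbf N/2}$ to the homogeneous volume $V(\mathbf x,\mathbf y,\sqrt t)^{-1}$, where $w(B(\mathbf x,\sqrt t))\sim t^{N/2}\prod_{\alpha\in R}(|\langle\mathbf x,\alpha\rangle|+\sqrt t)^{k(\alpha)}$. When some wall distance $|\langle\mathbf x,\alpha\rangle|$ is large compared to $\sqrt t$, additional decay must be squeezed out of $e^{-A^2/(4t)}$. The route I propose is to combine the preliminary bound with the mass identity $\int h_t(\mathbf x,\mathbf y)\,dw(\mathbf y)=1$: averaging the preliminary bound over a Euclidean ball of radius $\sqrt t$ around $\mathbf x$, where $e^{-cd^2/t}\asymp 1$, and matching against the $L^1$ mass, the volume factor $w(B(\mathbf x,\sqrt t))$ is forced into the denominator. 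By the symmetry $h_t(\mathbf x,\mathbf y)=h_t(\mathbf y,\mathbf x)$ one recovers the $\max$ in $V$. This is the heart of the difficulty: the $\mathbf x$-independent Lebesgue scaling of $\tilde h_t$ does not see the reflection hyperplanes, whereas the homogeneous volume depends delicately on the distances from $\mathbf x$ to them, and reconciling the two requires careful use of how $\mu_{\mathbf x}$ distributes mass inside $\operatorname{conv}\mathcal O(\mathbf x)$.

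For \eqref{eq:Gauss-Lipschitz}, I differentiate R\"osler's integral in $\mathbf y$ using $\partial_{\mathbf y}A^2=2(\mathbf y-\eta)$ together with the Gaussian gradient bound $|\nabla\tilde h_t(A)|\lesssim (A/t)\tilde h_t(A/\sqrt 2)$. Integrating along the segment $[\mathbf y,\mathbf y']$ (which has length $\le\sqrt t$) and applying the already proved \eqref{eq:Gauss} to the resulting integrand, with a slightly worse exponential constant absorbed by going from $1/(4t)$ to $1/(8t)$, produces the extra multiplicative factor $\|\mathbf y-\mathbf y'\|/\sqrt t$ and completes the proof.
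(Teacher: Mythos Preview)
The paper does not prove this theorem; it cites \cite[Theorem~4.1]{ADH-JFAA} and \cite[Theorem~3.1]{DH-Studia} for a ``two-step proof based on \eqref{eq:translation-radial}''. Your overall plan (start from R\"osler's formula, use the lower bound $A\ge d(\mathbf x,\mathbf y)$, then upgrade) is the right shape, and your derivation of \eqref{eq:Gauss-Lipschitz} from \eqref{eq:Gauss} by differentiating in $\mathbf y$ and integrating along the segment is fine. However, two of your intermediate steps are genuine gaps.

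First, the dichotomy you use to produce the Euclidean factor $(1+\|\mathbf x-\mathbf y\|/\sqrt t)^{-2}$ is incorrect. You claim that when $\|\mathbf x-\mathbf y\|\gg\big|\|\mathbf x\|-\|\mathbf y\|\big|$ one is forced to have $d(\mathbf x,\mathbf y)\gtrsim\|\mathbf x-\mathbf y\|$. This is false: take any $\mathbf y=\sigma(\mathbf x)$ with $\sigma\in G\setminus\{\mathrm{Id}\}$. Then $\|\mathbf x\|=\|\mathbf y\|$ and $d(\mathbf x,\mathbf y)=0$, while $\|\mathbf x-\mathbf y\|$ can be arbitrarily large. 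This is precisely the regime in which the Euclidean factor carries information that neither the Gaussian in $d(\mathbf x,\mathbf y)$ nor the factor $\big(1+\big|\|\mathbf x\|-\|\mathbf y\|\big|/\sqrt t\big)^{-2}$ sees, and it is exactly why the ``second step'' in \cite{DH-Studia} requires a separate argument rather than a simple case split.

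Second, your route from the crude prefactor $t^{-\mathbf N/2}$ to $V(\mathbf x,\mathbf y,\sqrt t)^{-1}$ does not work as stated. Averaging the preliminary bound over $B(\mathbf x,\sqrt t)$ and comparing with the mass identity $\int h_t(\mathbf x,\cdot)\,dw=1$ yields only the inequality $t^{\mathbf N/2}\lesssim w(B(\mathbf x,\sqrt t))$, which you already knew from \eqref{eq:growth}; it cannot convert an integrated bound back into the stronger \emph{pointwise} bound with $w(B(\mathbf x,\sqrt t))^{-1}$ in place of $t^{-\mathbf N/2}$. The argument in \cite{ADH-JFAA} instead exploits further structure of R\"osler's formula: after peeling off $e^{-d(\mathbf x,\mathbf y)^2/(8t)}$, the remaining integral $\int e^{-A^2/(8t)}\,d\mu_{\mathbf x}$ is (up to a dimensional constant) $h_{2t}(\mathbf x,\mathbf y)$ itself, and one then feeds in a uniform on-diagonal-type bound $h_s(\mathbf x,\mathbf y)\lesssim w(B(\mathbf x,\sqrt s))^{-1}$, which requires its own proof. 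Your sketch skips this, and the missing idea is not recoverable from mass conservation alone.
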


\section{Homogeneous kernels and Dunkl multiplier operators} 

\subsection{Regular kernels} 
We say that a tempered distribution $\boldsymbol S$ on $ \mathcal S(\mathbb R^N) $ is a regular kernel\index[Other]{regular kernel@regular kernel}  of order zero if $\boldsymbol S$ coincides away from the  origin with  
a function of the form $S(\mathbf x) w(\mathbf x)$, where  
$S\in C^\infty (\mathbb R^N\setminus \{0\})$ and satisfies 
$$ \langle \boldsymbol S,f^{s}\rangle = \langle \boldsymbol S,f\rangle, \quad s>0, \quad f\in\mathcal S(\mathbb R^N),  $$
where $f^s(\mathbf x)=f(s\mathbf x)$. Clearly, 
 $$ S(r \mathbf x)=r^{-\mathbf N} S(\mathbf x) \quad \text{for  } \mathbf x\ne 0, \; r>0.$$  
Any tempered distribution $\boldsymbol S$, which is a regular kernel of order zero is of the form 
 \begin{equation}\label{eq:S_action}
     \begin{split}
     \langle \boldsymbol S,f\rangle &= \boldsymbol c_1{ \langle \delta_0, f\rangle } + \lim_{\varepsilon \to 0}\int_{\varepsilon}^\infty \int_{\| \bar {\mathbf{x}}\|=1} S(\bar{\mathbf x}) f(r\bar{\mathbf x})w(\bar{\mathbf x})d\bar{\mathbf  x}\frac{dr}{r}    \\
     & =\boldsymbol c_1f(0)+\lim_{\varepsilon\to 0}\int_{\| \mathbf x\|>\varepsilon} S(\mathbf x)f(\mathbf x)\;dw(\mathbf x),
     \end{split}
 \end{equation} 
 where $\int_{\| \bar{\mathbf  x}\|=1} S(\bar{\mathbf  x})\, w(\bar{\mathbf x})\, d
 \bar{\mathbf x}=0$ and $d\bar{\mathbf x}$ is the Lebesgue measure on the unit sphere. For the proof, see Christ \cite[Lemma 2.4]{Christ-inversion}.  

Set \index[Other]{$\reallywidecheck f$@$\reallywidecheck f(\mathbf{x})=f(-\mathbf{x})$}
$$ \langle \reallywidecheck{\boldsymbol S},f\rangle =\langle \boldsymbol S,\reallywidecheck f\rangle, \quad \reallywidecheck f(\mathbf x)=f(-\mathbf x).$$ 
 
Let $\varphi$ be a $C^\infty$ non-negative radial function supported in the set $\{\mathbf{x}:1/4<\|\mathbf x\|<1\}$ such that $\sum_{j\in\mathbb Z} \varphi (2^j \mathbf x)=1$ for $\mathbf x\ne 0$. 
 Set 
 $$ S_j(\mathbf x)=\varphi(2^{-j} \mathbf x)S(\mathbf x).$$
 Then $S_j(\mathbf x)=2^{-j\mathbf N} S_0(2^{-j}\mathbf x)$ and 
 \begin{equation}\label{eq:distr_S}
      \langle \boldsymbol S,f\rangle = \boldsymbol c_1f(0)+\sum_{j=-\infty}^\infty \int_{\mathbb{R}^N} S_j(\mathbf x)f(\mathbf x)\, dw(\mathbf x).
 \end{equation}

The density with respect to the measure $dw$ of the Dunkl  transform $\mathcal F\boldsymbol S$ of $\boldsymbol S$ coincides with a homogeneous of degree zero, $C^\infty$  away from the origin function $\theta$. Indeed, 

\begin{equation*}
\begin{split}
    \langle \mathcal F\boldsymbol S,f\rangle &= \langle \boldsymbol S,\mathcal F f\rangle = \boldsymbol c_1\mathcal F f(0)+\sum_{j=-\infty}^\infty \int_{\mathbb{R}^N} S_j({\mathbf{x}})\mathcal Ff({\mathbf{x}})\, dw({\mathbf{x}})\\
    &=\boldsymbol c_1\int_{\mathbb{R}^N} f(\xi)\, dw(\xi)+\sum_{j=-\infty}^\infty \int_{\mathbb{R}^N} (\mathcal FS_0)(2^j\xi)f(\xi)\, dw(\xi).\\
\end{split}\end{equation*}
Observe that $\mathcal F S_0$ is a Schwartz class function such that $\mathcal FS_0(0)=0$. Hence $\mathcal |\mathcal FS_0(\xi )|\leq C \|\xi \|$ for $\|\xi \|\leq 1.$ Consequently, it is not difficult to see that $\boldsymbol c_1+\sum_{j=-\infty}^\infty \mathcal FS_0(2^j\xi)$ converges absolutely  to a $C^\infty$ function $\theta (\xi)$ for $\xi \ne 0$. Moreover,  $|\partial^J \theta (\xi)|\leq C_J \| S_0\|_{C^{|J|+\mathbf N}}$ {for all $J \in \mathbb{N}_0^N$ and $\xi \in \mathbb{R}^N$, $\|\xi\|=1$}. Further, to see that $\theta(\xi)$ is homogeneous of degree zero, we write 
\begin{equation*}
\begin{split}
    \int_{\mathbb{R}^N} \theta (t\xi) f(\xi)\, dw(\xi) &= \int_{\mathbb{R}^N} \theta (\xi) f_t(\xi)\, dw(\xi)=\langle \boldsymbol  S, \mathcal Ff_t\rangle \\&=\langle \boldsymbol S, (\mathcal Ff)^t\rangle =\langle \boldsymbol S,\mathcal Ff\rangle =\int_{\mathbb{R}^N} \theta(\xi) f(\xi)\, dw(\xi). 
    \end{split}
\end{equation*}

 Conversely, every $C^\infty$ away from the origin homogeneous of degree zero  function $\theta(\xi)$ is the Dunkl transform of the  distribution 
 $$ \langle \boldsymbol S, f\rangle = \int_{\mathbb{R}^N} \theta (\xi) \mathcal F^{-1} f(\xi)\, dw(\xi) $$ which is a regular kernel of order 0, which means that $\boldsymbol S$ is of the form \eqref{eq:S_action}. The corresponding homogeneous of degree $-\mathbf N$  function $S({\mathbf{x}})$ is defined by 
 $$ S(\mathbf x)=\sum_{j=-\infty}^\infty 2^{j\mathbf N} \mathcal F^{-1} (\theta\cdot \varphi)(2^j \mathbf x), \quad \mathbf x\ne 0.$$ Moreover, for every $s\geq 0$,
 $$|\boldsymbol c_1|+\sup_{|I|\leq s, \|\mathbf x\|=1}|\partial^I S(\mathbf x)|\leq C_s\sup_{|J|\leq s+\mathbf N, \|\xi\|=1}|\partial^J \theta (\xi)|.$$

 With a regular kernel $\boldsymbol S$ of order 0, we associate the convolution operator (initially defined for $f\in\mathcal S(\mathbb R^N)$)  
 $$\mathbf Sf({\mathbf{x}})=\boldsymbol S*f(\mathbf x)=\langle \reallywidecheck {\boldsymbol S},\tau_{\mathbf x} f\rangle=\langle \boldsymbol S, \reallywidecheck{\tau_{\mathbf x}f}\rangle.\index[Other]{$\mathbf Sf=\boldsymbol S*f$@$\mathbf Sf=\boldsymbol S*f$}$$
 Clearly, 
 $$\mathbf Sf(\mathbf x) =\int_{\mathbb{R}^N} \theta (\xi) \mathcal F^{-1}( \reallywidecheck{\tau_{\mathbf x}f})(\xi)\, dw(\xi)=\int_{\mathbb{R}^N} \theta(\xi) \mathcal Ff(\xi)E(i\xi,\mathbf x)\, dw(\xi)=\mathcal F^{-1} (\theta(\cdot)\mathcal Ff(\cdot))(\mathbf x),$$
thus $\mathbf S$ is a Dunkl multiplier operator, which by the Plancherel's equality~\eqref{eq:Plancherel} is bounded on $L^2(dw)$. Theorem 1 of~\cite{DzHe-JFA} asserts that it is bounded on $L^p(dw)$, weak-type (1.1), and bounded on the Hardy space $H^1_{\rm Dunkl}$. Further, 
 $$\mathbf Sf(\mathbf x)= \boldsymbol c_1f(\mathbf x) + \sum_{j=-\infty}^\infty S_j*f(\mathbf x)=\boldsymbol c_1 f(\mathbf x)+ \sum_{j=-\infty}^\infty  \int_{\mathbb{R}^N} S_j(\mathbf x,\mathbf y)f(\mathbf y)\, dw(\mathbf y),$$
 where $S_j(\mathbf x,\mathbf y)=\tau_{\mathbf x}S_j(-\mathbf y)$. 

For a regular kernel $\boldsymbol S$ of order zero, we define the action of the convolution operator $\mathbf S$ on $L^1(dw)$-functions in the distributional sense, that is,
\begin{equation*}
    \langle \mathbf S f, \varphi\rangle =\int_{\mathbb{R}^N} f(\mathbf x)(\reallywidecheck{\boldsymbol S}* \varphi) (\mathbf x)\, dw(\mathbf x)\quad \text{for all} \ \varphi \in \mathcal S(\mathbb R^N). 
\end{equation*}
This is well defined, since 
$$\reallywidecheck{\boldsymbol S}*\varphi (\mathbf x)=\int_{\mathbb{R}^N} \theta(\xi) \mathcal F^{-1}\varphi(\xi)E(-i\mathbf x,\xi)\, dw(\xi)$$ 
is a bounded function of $\mathbf x$ which belongs to $L^2(dw)$ as well. Further, for a function $f\in L^1(dw)$, the distribution $\mathbf Sf$ belongs to  $L^1(dw)$ if there is $g\in L^1(dw)$   such that
\begin{equation*}
   \langle \mathbf Sf,\varphi \rangle =\int_{\mathbb{R}^N} g(\mathbf{x})\varphi (\mathbf{x})\, dw(\mathbf{x}) \quad \text{for all} \ \varphi \in \mathcal S(\mathbb R^N). 
\end{equation*}
Then we simply write $\mathbf Sf=g$. It is not difficult to prove that in this case 
\begin{equation*}
   \mathbf S(f*h_t)=(\mathbf Sf)*h_t,
\end{equation*} where the action on the right-hand side can be understood  in  both: the $L^2(dw)$ and  distributional sense. 
{
It turns out that the kernels $S_j(\mathbf x,\mathbf y)$ satisfy the following estimates (see Theorem \ref{teo:translations_kernels} below) which are crucial in our investigations: for  all multi-indexes $I$ and $J$ and any even integer $\kappa>|I|+|J|+\mathbf N$ there is a constant $C_{I,J}>0$ such that 
 $$ |T^I_{\mathbf x}T^J_{\mathbf y} S_j(\mathbf x,\mathbf y)|\leq C_{I,J}\| S_0\|_{C^{\kappa }} 2^{j(|I|+|J|)}w(B(\mathbf x, 2^{j}))^{-1} \Big(1+\frac{\| \mathbf x-\mathbf y\|}{2^{j}}\Big)^{-1}\chi_{[0,2^{j}]}(d(\mathbf x,\mathbf y)).  $$
 Consequently, summing up the estimates above, we obtain the following bounds for the  associated kernel $S(\mathbf x,\mathbf y)=\sum_{j} S_j(\mathbf x,\mathbf y)$: 
\begin{equation}\label{eq:S-bound}
    |T_{\mathbf x}^IT_{\mathbf y}^JS(\mathbf x,\mathbf y)|\leq  C_{I,J}\|S_0\|_{C^{\kappa}}  \frac{d(\mathbf x,\mathbf y)}{\|\mathbf x-\mathbf y\|} d(\mathbf x,\mathbf y)^{-|I|-|J|} w(B(\mathbf x,d(\mathbf x,\mathbf y)))^{-1}. 
\end{equation}
 }

Let us also note  that for regular kernels $\boldsymbol S$ and $\boldsymbol  Z$  of order zero,    the corresponding operators $\mathbf S$ and $\mathbf Z$ commute and its composition is represented as a convolution with a  regular kernel of order zero. 
Moreover, the adjoint operator $\mathbf S^*$ to $\mathbf S$  corresponds to the 
 Dunkl multiplier $\overline{\theta (\xi)}$.

\subsection{Condition \texorpdfstring{$(\star)$}{(star)}  }

 Let { $\overrightarrow S=(\boldsymbol S^{\{0\}},\boldsymbol S^{\{1\}},\boldsymbol S^{\{2\}},...,\boldsymbol S^{\{d\}})$}  be a system of regular kernels of order $0$
on $\mathbb R^N$. We denote: 
\begin{align*}
    \mathbf S^{\{j\}} f=\boldsymbol S^{\{j\}}*f, \ \ \mathcal F \boldsymbol S^{\{j\}}(\xi)=\theta_j(\xi).
\end{align*}
Then $\mathcal F(\mathbf S^{\{j\}*} f)(\xi)=\overline{\theta (\xi)}\mathcal Ff(\xi)$. 

{We abuse a little our terminology and  say that  { $\overrightarrow S=(\boldsymbol S^{\{0\}},\boldsymbol S^{\{1\}},\boldsymbol S^{\{2\}},...,\boldsymbol S^{\{d\}})$} satisfies ~\eqref{eq:triangle_new} if for the system of their Dunkl transforms { $(\theta_0(\xi), \theta_1(\xi),\theta_2(\xi),\dots,\theta_d(\xi))$} the  condition ~\eqref{eq:triangle_new} holds. }

Let us note that if  a system $\overrightarrow S$ of regular kernels of order zero fulfills the condition ~\eqref{eq:triangle}, then so $\overrightarrow{S^*}$ does, that is,   
{
\begin{displaymath}\label{eq:triangle_bar}\tag{$\bar\star$}
\text{\rm rank} \, 
\left( \begin{array}{ccccc}
\overline{\theta_0(\xi)} & \overline{\theta_1(\xi)} & \overline{\theta_2(\xi)}& \ldots &\overline{\theta_d(\xi)} \\
\overline{\theta_0(-\xi)} & \overline{\theta_1(-\xi)} & \overline{\theta_2(-\xi)}& \ldots &\overline{\theta_d(-\xi)}
\end{array} \right) =2.
\end{displaymath}
}

{We say that a regular kernel  $\boldsymbol S$ is real if and only if $\boldsymbol c_1 \in \mathbb{R}$ and $S(\mathbf x)$ is real valued  (see~\eqref{eq:S_action}), or, equivalently, thanks to~\eqref{eq:conjE},   $\theta(\xi)=\overline{\theta (-\xi)}$, where  $\theta(\xi)=\mathcal F\boldsymbol S(\xi)$. 
 Similarly, $\boldsymbol S$  is purely imaginary if and only if  $\theta(\xi)=-\overline{\theta (-\xi)}$, where $\theta (\xi)=\mathcal F \boldsymbol S(\xi)$.}

The following proposition proved in  Uchiyama \cite{Uchiyama} is crucial for the proofs of our results.

\begin{proposition}[{\cite[Lemma 2.2]{Uchiyama}}]
\label{propo:real_complex}
    Suppose that a system  {$(\theta_0(\xi), \theta_1(\xi),\theta_2(\xi),\dots, \theta_d(\xi))$ } of complex valued functions on $\mathbb R^N$, which are $C^\infty$ away from the origin and homogeneous of degree zero,  {satisfies \eqref{eq:triangle_new}.}  
    Then for any {$\overrightarrow{\boldsymbol \nu} \in \mathbb{C}^{d+1}$,} $\|\overrightarrow{\boldsymbol \nu}\|=1$, there is a system of homogeneous of degree zero and smooth away from the origin functions 
    $$\mathbb R^N\setminus{\{0\}} \ni \xi \mapsto \Theta_j(\xi,\overrightarrow{\boldsymbol \nu})\in \mathbb C, \quad  j=0, 1,2,...,d,$$ such that for all $\xi \in \mathbb{R}^N\setminus \{0\}$ we have 
\begin{equation}\label{eq:Uchi1}
\sum_{j=0}^d \overline{\theta_j (\xi)}\Theta_j (\xi,\overrightarrow{\boldsymbol \nu})=1,
\end{equation}
\begin{equation}\label{eq:Uchi2} {\rm  Re}\sum_{j=0}^d \overline{\nu_j }\{\Theta_j(\xi,\overrightarrow{\boldsymbol \nu})+\Theta_j(-\xi,\overrightarrow{\boldsymbol \nu})\}= 
    {\rm  Im}\sum_{j=0}^d \overline{\nu_j} \{\Theta_j(\xi,\overrightarrow{\boldsymbol \nu})-\Theta_j(-\xi,\overrightarrow{\boldsymbol \nu})\}= 0, 
\end{equation}
\begin{equation*}
\begin{split}
\Big|\partial_{\xi}^{I}\Theta_j (\xi,\overrightarrow{\boldsymbol\nu}) \Big|\leq C_I \ \ \text{\rm for all } \|\xi\|=1  \ \text{\rm with } C_I>0 \ \text{\rm independent of} \ \overrightarrow{\boldsymbol{\nu}}.
\end{split}
\end{equation*}
\end{proposition}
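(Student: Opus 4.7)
My plan is to construct $\overrightarrow{\boldsymbol\Theta}(\xi,\overrightarrow{\boldsymbol\nu}) := (\Theta_1(\xi,\overrightarrow{\boldsymbol\nu}),\ldots,\Theta_d(\xi,\overrightarrow{\boldsymbol\nu}))$ pointwise on the unit sphere $\{\|\xi\|=1\}$ as an affine modification of a minimum-norm particular solution of~\eqref{eq:Uchi1}, where the correction is chosen from the Hermitian orthogonal complement of $\overrightarrow{\boldsymbol\theta}(\xi):=(\theta_1(\xi),\ldots,\theta_d(\xi))$ in $\mathbb{C}^d$ so as to simultaneously enforce~\eqref{eq:Uchi2}. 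The rank-$2$ hypothesis~\eqref{eq:triangle} will provide the uniform non-degeneracy needed for smoothness and bounds independent of $\overrightarrow{\boldsymbol\nu}$; the resulting $\Theta_j$ then extend to $\mathbb{R}^N\setminus\{0\}$ by homogeneity of degree zero.

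The first step is to combine the two real identities in~\eqref{eq:Uchi2} into the single complex symmetry
$$\phi(\xi) + \overline{\phi(-\xi)} = 0, \qquad \phi(\xi) := \sum_{j=1}^d \bar\nu_j\,\Theta_j(\xi,\overrightarrow{\boldsymbol\nu}),$$
so I need $\overrightarrow{\boldsymbol\Theta}$ satisfying~\eqref{eq:Uchi1} pointwise together with this antipodal symmetry. A natural particular solution of~\eqref{eq:Uchi1} is $\overrightarrow{\boldsymbol\Theta}^{(0)}(\xi) := \overrightarrow{\boldsymbol\theta}(\xi)/\|\overrightarrow{\boldsymbol\theta}(\xi)\|^2$, well-defined and smooth on the sphere since~\eqref{eq:triangle} forces $\overrightarrow{\boldsymbol\theta}(\xi)\ne\overrightarrow{\boldsymbol 0}$; every other solution differs from $\overrightarrow{\boldsymbol\Theta}^{(0)}$ by a vector in $\overrightarrow{\boldsymbol\theta}(\xi)^\perp \subset \mathbb{C}^d$.

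I then take the ansatz $\overrightarrow{\boldsymbol\Theta}(\xi,\overrightarrow{\boldsymbol\nu}) = \overrightarrow{\boldsymbol\Theta}^{(0)}(\xi) + t(\xi)\,\overrightarrow{\boldsymbol u}(\xi,\overrightarrow{\boldsymbol\nu})$ using the smooth orthogonal projection
$$\overrightarrow{\boldsymbol u}(\xi,\overrightarrow{\boldsymbol\nu}) := \overrightarrow{\boldsymbol\nu} - \frac{\langle\overrightarrow{\boldsymbol\nu},\overrightarrow{\boldsymbol\theta}(\xi)\rangle}{\|\overrightarrow{\boldsymbol\theta}(\xi)\|^2}\,\overrightarrow{\boldsymbol\theta}(\xi) \;\in\; \overrightarrow{\boldsymbol\theta}(\xi)^\perp,$$
which satisfies $\langle\overrightarrow{\boldsymbol u}(\xi,\overrightarrow{\boldsymbol\nu}),\overrightarrow{\boldsymbol\nu}\rangle = \|\overrightarrow{\boldsymbol u}(\xi,\overrightarrow{\boldsymbol\nu})\|^2 = 1 - |\langle\overrightarrow{\boldsymbol\nu},\overrightarrow{\boldsymbol\theta}(\xi)\rangle|^2/\|\overrightarrow{\boldsymbol\theta}(\xi)\|^2$. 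Substituting into the symmetry and imposing the compatible antipodal relation $t(-\xi)=\overline{t(\xi)}$, the condition collapses to the explicit formula
$$t(\xi) = -\,\frac{\phi_0(\xi) + \overline{\phi_0(-\xi)}}{\|\overrightarrow{\boldsymbol u}(\xi,\overrightarrow{\boldsymbol\nu})\|^2 + \|\overrightarrow{\boldsymbol u}(-\xi,\overrightarrow{\boldsymbol\nu})\|^2}, \qquad \phi_0(\xi):= \frac{\langle\overrightarrow{\boldsymbol\theta}(\xi),\overrightarrow{\boldsymbol\nu}\rangle}{\|\overrightarrow{\boldsymbol\theta}(\xi)\|^2},$$
and a short check shows that this $t$ automatically obeys $t(-\xi)=\overline{t(\xi)}$, so the construction is consistent.

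The main obstacle is to secure a uniform lower bound on the denominator $\|\overrightarrow{\boldsymbol u}(\xi,\overrightarrow{\boldsymbol\nu})\|^2 + \|\overrightarrow{\boldsymbol u}(-\xi,\overrightarrow{\boldsymbol\nu})\|^2$ over the compact set $\{(\xi,\overrightarrow{\boldsymbol\nu}) : \|\xi\|=\|\overrightarrow{\boldsymbol\nu}\|=1\}$. By the equality case of the Cauchy--Schwarz inequality, this sum vanishes only if $\overrightarrow{\boldsymbol\nu}$ is simultaneously a scalar multiple of both $\overrightarrow{\boldsymbol\theta}(\xi)$ and $\overrightarrow{\boldsymbol\theta}(-\xi)$; the rank-$2$ hypothesis~\eqref{eq:triangle} asserts precisely that $\overrightarrow{\boldsymbol\theta}(\xi)$ and $\overrightarrow{\boldsymbol\theta}(-\xi)$ are linearly independent at every unit $\xi$, which excludes this. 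Compactness upgrades pointwise non-vanishing to a uniform lower bound $c_0 > 0$. With this in hand, the numerator---bounded in $\overrightarrow{\boldsymbol\nu}$ with smooth coefficients in $\xi$---divided by a denominator bounded below by $c_0$ gives $t(\xi)$ smooth in $\xi$ on the unit sphere and bounded uniformly in $\overrightarrow{\boldsymbol\nu}$. Extending $\Theta_j$ by homogeneity of degree zero to $\mathbb{R}^N\setminus\{0\}$ and differentiating then yields the bounds $|\partial^I_\xi \Theta_j(\xi,\overrightarrow{\boldsymbol\nu})| \le C_I$ on $\|\xi\|=1$ independent of $\overrightarrow{\boldsymbol\nu}$, completing the plan.
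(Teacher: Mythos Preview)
The paper does not give its own proof of this proposition; it simply quotes Uchiyama's Lemma~2.2 and refers the reader there. Your construction is correct and is in essence Uchiyama's argument: an explicit algebraic formula for $\overrightarrow{\boldsymbol\Theta}(\xi,\overrightarrow{\boldsymbol\nu})$ built from $\overrightarrow{\boldsymbol\theta}(\xi)$, $\overrightarrow{\boldsymbol\theta}(-\xi)$, and $\overrightarrow{\boldsymbol\nu}$, whose only possible singularity is a denominator that vanishes precisely when $\overrightarrow{\boldsymbol\nu}$ lies in the span of both $\overrightarrow{\boldsymbol\theta}(\xi)$ and $\overrightarrow{\boldsymbol\theta}(-\xi)$---exactly what the rank-$2$ hypothesis~\eqref{eq:triangle} rules out---after which compactness of $S^{N-1}\times S^{2d-1}$ yields the uniform lower bound and hence the $\overrightarrow{\boldsymbol\nu}$-independent derivative estimates. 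Your reduction of the two real constraints in~\eqref{eq:Uchi2} to the single antipodal relation $\phi(\xi)+\overline{\phi(-\xi)}=0$ and the self-consistency check $t(-\xi)=\overline{t(\xi)}$ are handled cleanly.
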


\subsection{Estimates for translations of  kernels} We finish the section by stating estimates for  the Dunkl translations of functions associated with regular kernels. We want to emphasis the presence of the Euclidean distance in the estimates. It is crucial 
 in  proving the Fefferman-Stein decomposition of compactly supported  ${\rm BMO}(\mathbf X)$-functions (see Theorem \ref{UchiyamaDecomposition0}).

\begin{theorem}({\rm cf.} \cite[Theorem 3.6]{DH-JFAA}). \label{teo:translations_kernels}
     Let $\kappa$ and $m$ be  non-negative  even integers. 

\begin{enumerate}[(a)]
    \item{If $\kappa>\mathbf N+m$, then there is a constant $C_\kappa>0$ such that for all $\varphi\in C^{\kappa}(\mathbb R^N)$ such that $\text{\rm supp}\, \varphi\subseteq B(0,1) $ and all multi-indexes $I$ and $J$ such that $|I|+|J|\leq m$ one has 
     \begin{equation*}
         |T^I_{\mathbf x}T^J_{\mathbf y}\varphi_t(\mathbf x,\mathbf y)|\leq C_{\kappa} t^{-|I|-|J|}\| \varphi\|_{C^{\kappa}(\mathbb R^N)}\Big(1+\frac{\|\mathbf x-\mathbf y\|}{t}\Big)^{-1} w(B(\mathbf x,t))^{-1}\chi_{[0,t]}(d(\mathbf x,\mathbf y))
     \end{equation*}
 for all $\mathbf x,\mathbf y\in \mathbb R^N$.}\label{translations_kernels:a}

\item{ If $\kappa>\mathbf N+1$, then there is a constant $C_\kappa'>0$ such that 
     \begin{equation*}
     \begin{split}
     &    |\varphi_t(\mathbf x,\mathbf y)-\varphi_t(\mathbf x,\mathbf y')|\\
         &\leq C_{\kappa}' \| \varphi\|_{C^{\kappa}(\mathbb R^N)} \frac{\|\mathbf y-\mathbf y'\|}{t+\| \mathbf x-\mathbf y\|}w(B(\mathbf x,t))^{-1/2}(w(B(\mathbf y,t))^{-1/2}+w(B(\mathbf y',t))^{-1/2})
     \end{split}\end{equation*}
     for all $\mathbf x,\mathbf y,\mathbf y'\in\mathbb R^N$ and all $\varphi\in C^{\kappa}(\mathbb R^N)$, $\text{\rm supp}\, \varphi\subseteq B(0,1)$.}\label{translations_kernels:b}

     \item{If $\kappa>\mathbf N+m+1$, then there is a constant $C_{\kappa}''>0$ such that 
      \begin{equation*}
     \begin{split}
     &    |T_{\mathbf y}^I\varphi_t(\mathbf x,\mathbf y)-T^I_{\mathbf{y}}\varphi_t(\mathbf x',\mathbf y)|\\
         &\leq C_{\kappa}'' t^{-|I|} \| \varphi\|_{C^{\kappa}(\mathbb R^N)} \frac{\|\mathbf x-\mathbf x'\|}{t+\| \mathbf x-\mathbf y\|}w(B(\mathbf y,t))^{-1/2}(w(B(\mathbf x,t))^{-1/2}+w(B(\mathbf x',t))^{-1/2})
     \end{split}\end{equation*}
     for all $\mathbf x,\mathbf x',\mathbf y\in\mathbb R^N$,  all multi-indexes $I$, $|I|\leq m$, and all $\varphi\in C^{\kappa}(\mathbb R^N)$, $\text{\rm supp}\, \varphi\subseteq B(0,1)$.}\label{translations_kernels:c}
\end{enumerate}
 \end{theorem}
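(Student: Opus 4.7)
My plan is to follow the structure of the cited \cite[Theorem 3.6]{DH-JFAA}, whose heart is the combination of the support property \eqref{eq:supp_transl} for Dunkl translations with the fine Gaussian bounds of Theorem \ref{theorem:heat}. First, using the homogeneity $w(B(t\mathbf{x},tr))=t^{\mathbf{N}}w(B(\mathbf{x},r))$ together with the covariance of $\tau_{\mathbf{x}}$ under dilations, I reduce all three assertions to the normalized scale $t=1$ and then dilate back. The factor $\chi_{[0,t]}(d(\mathbf{x},\mathbf{y}))$ in (a) is automatic: since $\varphi_t$ is supported in $B(0,t)$, \eqref{eq:supp_transl} forces $\tau_{\mathbf{x}}\varphi_t(-\mathbf{y})=0$ whenever $d(\mathbf{x},\mathbf{y})>t$.

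For part (a) I begin with the base case $I=J=0$. The idea is to represent $\varphi$ as a convolution with a Dunkl heat kernel at a small scale (regularising $\varphi$ while losing only a controlled power of $\|\varphi\|_{C^{\kappa}}$) so that the Dunkl translate can be expressed through an integral of the form
$$\tau_{\mathbf{x}}\varphi(-\mathbf{y})=\int_{\mathbb{R}^{N}} h_{s}(\mathbf{y},\mathbf{z})\,\tau_{\mathbf{x}}\psi_{s}(-\mathbf{z})\,dw(\mathbf{z})+\text{remainder},$$
where $\psi_{s}$ is a smoothed Schwartz piece whose seminorms are dominated by $\|\varphi\|_{C^{\kappa}}$ and the remainder has size $O(s^{\kappa/2}\|\varphi\|_{C^{\kappa}})$. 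The Gaussian bound \eqref{eq:Gauss} applied to $h_{s}$ then supplies the volume factor $w(B(\mathbf{x},1))^{-1}$ (via $V(\mathbf{x},\mathbf{z},1)^{-1}$) and, after integrating out $\mathbf{z}$ using \eqref{eq:doubling}, the Euclidean decay $(1+\|\mathbf{x}-\mathbf{y}\|)^{-1}$. To pass to general $I,J$ with $|I|+|J|\le m$, I use the intertwining relations $T^{I}_{\mathbf{x}}\tau_{\mathbf{x}}\varphi(-\mathbf{y})=\tau_{\mathbf{x}}(T^{I}\varphi)(-\mathbf{y})$ and $T^{J}_{\mathbf{y}}\tau_{\mathbf{x}}\varphi(-\mathbf{y})=(-1)^{|J|}\tau_{\mathbf{x}}(T^{J}\varphi)(-\mathbf{y})$, which follow immediately from the Fourier representation \eqref{eq:transl}. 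Since Dunkl operators preserve $G$-invariant supports, $T^{I}T^{J}\varphi$ is still supported in $B(0,1)$ with $C^{\kappa-m}$ norm controlled by $\|\varphi\|_{C^{\kappa}}$, and the base case applies; the assumption $\kappa>\mathbf{N}+m$ is exactly what the argument consumes.

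Parts (b) and (c) I would obtain via a standard large/small-difference dichotomy. When $\|\mathbf{y}-\mathbf{y}'\|\gtrsim t+\|\mathbf{x}-\mathbf{y}\|$ (respectively $\|\mathbf{x}-\mathbf{x}'\|$ large in (c)), bound each of the two terms on the left separately by (a) and apply the triangle inequality; the asymmetric volume factors $w(B(\mathbf{x},t))^{-1/2}(w(B(\mathbf{y},t))^{-1/2}+w(B(\mathbf{y}',t))^{-1/2})$ arise by splitting $V^{-1}$ symmetrically via the doubling property \eqref{eq:doubling} and the polynomial growth \eqref{eq:growth}. In the complementary small-difference regime, use the same heat-kernel representation but invoke the Gauss–Lipschitz bound \eqref{eq:Gauss-Lipschitz} on $h_{s}$ in the relevant variable in order to extract the linear factor $\|\mathbf{y}-\mathbf{y}'\|$ (respectively $\|\mathbf{x}-\mathbf{x}'\|$). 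For part (c), one first moves $T^{I}_{\mathbf{y}}$ onto $\varphi$ by the intertwining and then runs the $\mathbf{x}$-difference version of the argument, using the symmetry $h_{t}(\mathbf{x},\mathbf{y})=h_{t}(\mathbf{y},\mathbf{x})$.

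The main technical obstacle is retaining the \emph{Euclidean} distance $\|\mathbf{x}-\mathbf{y}\|$ (rather than only the orbit distance $d(\mathbf{x},\mathbf{y})$) in the decay factor; this sharpness is precisely what makes the estimate strong enough for the Fefferman–Stein decomposition in Theorem \ref{UchiyamaDecomposition0}. This is inherited from the fine bound \eqref{eq:Gauss}, in which the Euclidean norm genuinely appears in the polynomial decay, and it must be propagated through the heat-kernel convolution without being degraded to $d(\mathbf{x},\mathbf{y})$. The delicate balancing of the orbit-distance support, the Euclidean decay, and the asymmetric volume factors appearing in (b) and (c) is the point where the argument is most technical.
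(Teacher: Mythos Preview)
Your scaling reduction to $t=1$ is exactly what the paper does, and the dilation relation you use matches the paper's identity
\[
T_{\mathbf x}^{I} T_{\mathbf y}^J \varphi_t(\mathbf x,\mathbf y)= (-1)^{|J|}t^{-|I|-|J|-\mathbf N} (T^{I+J}\varphi)\Big(\frac{\mathbf x}{t},\frac{\mathbf y}{t}\Big).
\]
However, the paper's proof stops there: for $t=1$ the statement \emph{is} \cite[Theorem~3.6]{DH-JFAA}, and the authors simply cite it. Everything you write after the scaling step --- the heat-kernel representation of $\varphi$, the use of \eqref{eq:Gauss} and \eqref{eq:Gauss-Lipschitz}, the large/small-difference dichotomy for (b) and (c) --- is a sketch of how one might reprove the cited theorem, not of the proof given here.

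As a side remark on your sketch itself: the intertwining step and the support argument are correct, but the heat-kernel representation you propose for the base case is vague at the crucial point. Writing $\varphi\approx h_s*\psi_s$ and then translating gives an integral in an auxiliary variable $\mathbf z$, and it is not automatic that the factor $(1+\|\mathbf x-\mathbf y\|/\sqrt{s})^{-2}$ from \eqref{eq:Gauss} survives the $\mathbf z$-integration to yield the sharp Euclidean decay $(1+\|\mathbf x-\mathbf y\|)^{-1}$; one has to be careful about how the remainder and the choice of $s$ interact with $\|\mathbf x-\mathbf y\|$. The actual argument in \cite{DH-JFAA} handles this, but your outline does not make clear how you would. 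In any case, for the present paper the entire $t=1$ discussion is superfluous.
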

\begin{proof}
  {  For $t=1$ this is \cite[Theorem 3.6]{DH-JFAA}.  In order to obtain the estimates for $\varphi_t$ we apply  \eqref{eq:scaling_ball} and the  relations 
  $$ T_{\mathbf x}^{I} T_{\mathbf y}^J \varphi_t(\mathbf x,\mathbf y)= (-1)^{|J|}t^{-|I|-|J|-\mathbf N} (T^{I+J}\varphi)\Big(\frac{\mathbf x}{t},\frac{\mathbf y}{t}\Big).$$ 
  }
\end{proof}
 \begin{theorem}\label{teo:bounds_conv}
     Let $\kappa$ and $m$ be non-negative even integers. 
     \begin{enumerate}[(a)]
         \item{If $\kappa >\mathbf N+1$, then there is a constant $C_\kappa>0$ such that for all functions $\psi,\, \phi\in C^{\kappa}(\mathbb R^N)$ supported in the unit ball $B(0,1)$ such that $\int_{\mathbb{R}^N} \phi({\mathbf{x}})\, dw({\mathbf{x}})=0$ and all $0<s\leq t$ one has 
 $$|(\psi_t*\phi_s)(\mathbf x,\mathbf y)|\leq C_\kappa \|\phi\|_{C^{\kappa}(\mathbb R^N)}\|\psi\|_{C^{\kappa}(\mathbb R^N)} \frac{s}{t}w(B(\mathbf x,t))^{-1} \Big(1+\frac{\|\mathbf x-\mathbf y\|}{t}\Big)^{-1} \chi_{[0,2t]}(d(\mathbf x,\mathbf y)).$$}\label{teo_a}
 \item{If $\kappa >\mathbf N+m+1$, then there is a constant $C_\kappa>0$ such that for all functions $\psi,\, \eta, \phi \in C^{\kappa}(\mathbb R^N)$ supported in the unit ball $B(0,1)$ such that $\phi=\Delta_k^{m/2}\eta $ and all $0<s\leq t$ one has 
 $$|(\psi_t*\phi_s)(\mathbf x,\mathbf y)|\leq C_\kappa \|\eta\|_{C^{\kappa}(\mathbb R^N)}\|\psi\|_{C^{\kappa}(\mathbb R^N)}\Big( \frac{s}{t}\Big)^mw(B(\mathbf x,t))^{-1} \Big(1+\frac{\|\mathbf x-\mathbf y\|}{t}\Big)^{-1} \chi_{[0,2t]}(d(\mathbf x,\mathbf y)).$$}\label{teo_b}
 \item{If $\kappa >\mathbf N+1$, then there is a constant $C_\kappa>0$ such that for  all functions $\psi,\, \phi\in C^{\kappa}(\mathbb R^N)$ supported in the unit ball $B(0,1)$ such that $\phi$ is radial and $\int_{\mathbb{R}^N} \psi(\mathbf x)\, dw(\mathbf x)=0$ and all $0<s\leq t$ one has 
 \begin{equation*}
     \begin{split}
        |(\psi_s* & \phi_t*\phi_t)(\mathbf x,\mathbf y)  - (\psi_s*\phi_t*\phi_t)(\mathbf x',\mathbf y)|\\
        & \leq C_\kappa \|\phi\|^2_{C^{\kappa}(\mathbb R^N)}\|\psi\|_{C^{\kappa}(\mathbb R^N)} \frac{s}{t}\frac{\|\mathbf x-\mathbf x'\|}{t}w(B(\mathbf y,t))^{-1} \Big(1+\frac{\|\mathbf x-\mathbf y\|}{t}\Big)^{-1} \chi_{[0,4t]}(d(\mathbf x,\mathbf y))
     \end{split}
 \end{equation*}
 for all $\mathbf x,\mathbf x',\mathbf y\in\mathbb R^N$, $\|\mathbf x-\mathbf x'\| \leq t$.}\label{teo_c}
 \item{If $\kappa >\mathbf N+m+1$, then there is a constant $C_\kappa>0$ such that for all functions $\psi,\, \eta, \phi \in C^{\kappa}(\mathbb R^N)$ supported in the unit ball $B(0,1)$ such that $\phi=\Delta_k^{m/2}\eta $ and all $0<s\leq t$ one has 
 \begin{equation*}
     \begin{split}
         |(\psi_t & *\phi_s)(\mathbf x,\mathbf y)-(\psi_t *\phi_s)(\mathbf x',\mathbf y)|\\
         & \leq C_\kappa \|\eta\|_{C^{\kappa}(\mathbb R^N)}\|\psi\|_{C^{\kappa}(\mathbb R^N)}\Big( \frac{s}{t}\Big)^m\frac{\|\mathbf x-\mathbf x'\|}{t}w(B(\mathbf x,t))^{-1} \Big(1+\frac{\|\mathbf x-\mathbf y\|}{t}\Big)^{-1} \chi_{[0,4t]}(d(\mathbf x,\mathbf y))
     \end{split}
 \end{equation*}
 for all $\mathbf x,\mathbf x',\mathbf y\in\mathbb R^N$, $\|\mathbf x-\mathbf x'\|\leq t$. }\label{teo_d}
     \end{enumerate}
 \end{theorem}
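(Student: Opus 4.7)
The plan is to reduce each part to a combination of the pointwise estimates of Theorem \ref{teo:translations_kernels} via the kernel representation of Dunkl convolution
\[
(f*g)(\mathbf{x},\mathbf{y})=\int_{\mathbb{R}^N} f(\mathbf{x},\mathbf{z})\,g(\mathbf{z},\mathbf{y})\,dw(\mathbf{z}),
\]
together with either a cancellation trick (parts (a), (c)) or the Laplacian structure of $\phi$ (parts (b), (d)).

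For part (a), since $\int\phi\,dw=0$, the integral-preserving property \eqref{eq:int-preserv} yields $\int \phi_s(\mathbf{x},\mathbf{z})\,dw(\mathbf{z})=0$, so I rewrite
\[
(\psi_t*\phi_s)(\mathbf{x},\mathbf{y})=\int \phi_s(\mathbf{x},\mathbf{z})\bigl[\psi_t(\mathbf{z},\mathbf{y})-\psi_t(\mathbf{x},\mathbf{y})\bigr]\,dw(\mathbf{z}).
\]
I bound $|\phi_s(\mathbf{x},\mathbf{z})|$ using Theorem \ref{teo:translations_kernels}\eqref{translations_kernels:a} (which forces $d(\mathbf{x},\mathbf{z})\leq s$ and gives the Euclidean decay $(1+\|\mathbf{x}-\mathbf{z}\|/s)^{-1}$), and the difference using Theorem \ref{teo:translations_kernels}\eqref{translations_kernels:c} with $I=0$, which contributes the factor $\|\mathbf{x}-\mathbf{z}\|/(t+\|\mathbf{x}-\mathbf{y}\|)$. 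The key absorption is $\|\mathbf{x}-\mathbf{z}\|(1+\|\mathbf{x}-\mathbf{z}\|/s)^{-1}\leq s$, which produces the gain $s/(t+\|\mathbf{x}-\mathbf{y}\|)=(s/t)(1+\|\mathbf{x}-\mathbf{y}\|/t)^{-1}$. Integrating out $\mathbf{z}$ on the support $d(\mathbf{x},\mathbf{z})\leq s$ and using $G$-invariance of $w$ together with doubling \eqref{eq:doubling} reduces $w(B(\mathbf{z},t))$ to $w(B(\mathbf{x},t))$; on the support $d(\mathbf{x},\mathbf{y})\leq s+t\leq 2t$ (which follows from \eqref{eq:supp_transl} applied to both factors) the same reasoning gives $w(B(\mathbf{y},t))\sim w(B(\mathbf{x},t))$, and the half-powers combine to $w(B(\mathbf{x},t))^{-1}$.

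For part (b), the Laplacian hypothesis $\phi=\Delta_k^{m/2}\eta$, the commutation of $\Delta_k$ with Dunkl convolution, and the scaling $\Delta_k^{m/2}\psi_t=t^{-m}(\Delta_k^{m/2}\psi)_t$, $\phi_s=s^m\Delta_k^{m/2}\eta_s$ give the algebraic identity
\[
\psi_t*\phi_s=\Bigl(\frac{s}{t}\Bigr)^{m}(\Delta_k^{m/2}\psi)_t*\eta_s,
\]
so no cancellation is needed: I bound both kernel factors directly by Theorem \ref{teo:translations_kernels}\eqref{translations_kernels:a} and integrate over $\mathbf{z}$, again reducing the weights on the joint support $d(\mathbf{x},\mathbf{z})\leq t$, $d(\mathbf{z},\mathbf{y})\leq s$ by doubling. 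Part (d) is the analogous modification: after the same identity I write the $\mathbf{x}$-difference as
\[
\int\bigl[(\Delta_k^{m/2}\psi)_t(\mathbf{x},\mathbf{z})-(\Delta_k^{m/2}\psi)_t(\mathbf{x}',\mathbf{z})\bigr]\eta_s(\mathbf{z},\mathbf{y})\,dw(\mathbf{z})
\]
and estimate the bracket by Theorem \ref{teo:translations_kernels}\eqref{translations_kernels:c} and $\eta_s(\mathbf{z},\mathbf{y})$ by \eqref{translations_kernels:a}.

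For part (c) the convolution is associative and commutative, so I write $\psi_s*\phi_t*\phi_t=(\psi_s*\phi_t)*\phi_t$; the radiality of $\phi$ guarantees that $\phi*\phi$ is a smooth radial function supported in $B(0,2)$, so the bound on the inner factor $(\psi_s*\phi_t)(\mathbf{z},\mathbf{y})$ is exactly the one produced in part (a). I then take the $\mathbf{x}$-difference through the outer integral, apply Theorem \ref{teo:translations_kernels}\eqref{translations_kernels:c} to $\phi_t(\mathbf{x},\mathbf{z})-\phi_t(\mathbf{x}',\mathbf{z})$ (producing $\|\mathbf{x}-\mathbf{x}'\|/(t+\|\mathbf{x}-\mathbf{z}\|)$), and integrate in $\mathbf{z}$ over the support $d(\mathbf{z},\mathbf{y})\leq 2t$, where again $G$-invariance and doubling yield $w(B(\mathbf{z},t))\sim w(B(\mathbf{y},t))\sim w(B(\mathbf{x},t))$. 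The resulting support in the statement, $d(\mathbf{x},\mathbf{y})\leq 4t$, follows from triangle inequality $d(\mathbf{x},\mathbf{y})\leq d(\mathbf{x},\mathbf{z})+d(\mathbf{z},\mathbf{y})\leq t+2t<4t$.

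The principal technical obstacle in every part is the bookkeeping of weight factors: Theorem \ref{teo:translations_kernels} delivers $w(B(\mathbf{x},t))^{-1/2}(w(B(\mathbf{y},t))^{-1/2}+w(B(\mathbf{y}',t))^{-1/2})$-type half-powers at different points, while the target has a single $w(B(\mathbf{x},t))^{-1}$ (or $w(B(\mathbf{y},t))^{-1}$). Reconciling these requires systematic use of doubling \eqref{eq:growth} combined with the $G$-invariance of $w$ and the support condition $d(\mathbf{x},\mathbf{y})\leq Ct$ — exactly at points $\mathbf{z}$ whose orbit distance to $\mathbf{x}$ is comparable to $t$ or smaller, one has $w(B(\mathbf{z},t))\sim w(B(\sigma(\mathbf{x}),t))=w(B(\mathbf{x},t))$. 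A secondary subtlety is that $\|\mathbf{x}-\mathbf{z}\|$ can be much larger than $d(\mathbf{x},\mathbf{z})$, but the Euclidean decay factor $(1+\|\mathbf{x}-\mathbf{z}\|/s)^{-1}$ built into Theorem \ref{teo:translations_kernels}\eqref{translations_kernels:a} is precisely what compensates for this discrepancy.
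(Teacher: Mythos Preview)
Your proposal is correct and follows essentially the same route as the paper's proof in the appendix: the cancellation of $\phi$ (resp.\ $\psi$) combined with the Lipschitz estimate from Theorem~\ref{teo:translations_kernels} for parts \eqref{teo_a} and \eqref{teo_c}, and the transfer of $\Delta_k^{m/2}$ across the convolution via $\psi_t*\phi_s=(s/t)^m(\Delta_k^{m/2}\psi)_t*\eta_s$ for parts \eqref{teo_b} and \eqref{teo_d}. The only cosmetic difference is that in \eqref{teo_a} and \eqref{teo_c} the paper differences in the second kernel variable and invokes Theorem~\ref{teo:translations_kernels}\eqref{translations_kernels:b} (in \eqref{teo_c} this is where radiality is silently used, since $\phi_t(\mathbf{x},\mathbf{z})=\phi_t(\mathbf{z},\mathbf{x})$ for radial $\phi$), whereas you commute the convolution and difference in the first variable via Theorem~\ref{teo:translations_kernels}\eqref{translations_kernels:c}; both choices yield the same bounds.
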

{The proof of Theorem~\ref{teo:bounds_conv} is provided in the appendix. }
 
\section{Hardy and \texorpdfstring{${\rm BMO}$}{BMO} spaces - basic properties} The Hardy space $H^1_{\rm Dunkl}$ is
\begin{align*}
     H^1_{\rm Dunkl}: =\left\{f\in L^1(dw): \|f\|_{H^1_{\rm Dunkl}}:=\|\mathcal Mf\|_{L^1(dw)}<\infty\right\},
\end{align*}  
where
\begin{equation*}
    \mathcal{M}f(\mathbf{x})=
\sup\nolimits_{\,{\|}\mathbf{x}-\mathbf{x}'{\|}^2<t}|e^{t\Delta_k}f(\mathbf{x}')|.
\end{equation*}
{As in the classical theory of Hardy spaces \cite{FeSt}, the space $H^1_{\rm Dunkl}$ admits other characterizations (see in~\cite{ADH-JFAA} and~\cite{DH-Studia}). 
 In the present section we state  characterizations of the space $H^1_{\rm Dunkl}$ by relevant Riesz transforms and atomic decompositions proved there. Then we present basic properties of its dual, namely the ${\rm BMO}(\mathbf X)$-space. }

\subsection{Characterizations of \texorpdfstring{ $H^1_{\rm Dunkl}$}{H1}} \ 

\subsubsection{Characterization by Riesz transforms}

 The Riesz transform are defined in the Dunkl setting as convolution operators associated with regular kernels of order zero $ \boldsymbol R_j(\mathbf{x})=c'_k x_j\|\mathbf{x}\|^{-\mathbf N-1}$, hence  they are Dunkl multiplier operators:
 \begin{align*}
   \mathbf R_jf=-T_j(-\Delta_k)^{-1/2}f= \mathcal F^{-1} \Big(-i\frac{\xi_j}{\|\xi\|}\mathcal F f(\xi)\Big).
 \end{align*}
Since the action of $\mathbf R_j$ on $L^1(dw)$-functions is well defined in the sense of distribution,  we set 
$$H^1_{\rm Riesz}=\{f\in L^1(dw): \|\mathbf R_jf\|_{L^1(dw)}<\infty \quad \text{\rm for all } j=1,2,...,N\}.$$
It was proved in \cite[Theorem 2.5]{ADH-JFAA} that the spaces $H^1_{\rm Dunkl}$ and $H^1_{\rm Riesz}$ coincide and the corresponding norms $\| f\|_{H^1_{\rm Dunkl}}$ and 
\begin{align*}
    \|f\|_{\rm Riesz}:=\| f\|_{L^1(dw)}+\sum_{j=1}^N \| \mathbf R_jf\|_{L^1(dw)}
\end{align*}
 are equivalent. 

\subsubsection{Characterization by atoms of Coifman--Weiss type}\label{sec:atomic}
Fix $1<q\leq \infty$. We say that a measurable function $a:\mathbb{R}^N \longmapsto \mathbb{C}$ is a $(1,q)$-atom if there is a ball $B$ such that $\text{supp}\, a\subseteq B$, $\|a\|_{L^{\infty}}\leq w(B)^{\frac{1}{q}-1}$, $\int_{\mathbb{R}^N} a(\mathbf{x})\, dw(\mathbf{x})=0$. A function $f$ belongs to $H^1_{(1,q)}$ if there are sequences  $\{\lambda_j\}_{j \in \mathbb{N}}$ of complex numbers and $(1,q)$-atoms $\{a_j\}_{j \in \mathbb{N}}$ such that $f=\sum_{j=0}^{\infty} \lambda_j a_j$ and $\sum_{j=0}^{\infty} |\lambda_j|<\infty$. Then 
\begin{align*}
    \|f\|_{H^1_{(1,q)}}:=\inf\Big\{ \sum_{j=0}^{\infty} |\lambda_j|\Big\},
\end{align*}
where the infimum is taken over all representation of $f$ as above. Now    \cite[Theorem 1.5]{DH-Studia} asserts that for all $1<q\leq \infty$ 
the spaces $H^1_{\rm Dunkl}$ and $H^1_{(1,q)}$
  coincide and the norms $\|f\|_{H^1_{\rm Dunkl}}$ and $\| f\|_{H^1_{(1,q)}}$ are equivalent. 

\subsection{
\texorpdfstring{${\rm BMO}(\mathbf X)$}{BMO(X)}
 and \texorpdfstring{${\rm VMO}(\mathbf X)$}{VMO(X)}   spaces - duality}\label{sec:BMO} 
For a locally measurable function $f$ and a Euclidean ball $B \subset \mathbb{R}^N$, let us denote 
\begin{equation*}
    f_{B}:=\frac{1}{w(B)}\int_B f(\mathbf{x})\,dw(\mathbf{x}).
\end{equation*}
 We set 
 \begin{align*}
      {\rm BMO}(\mathbf X)=\{b\in L^1_{\rm loc} (dw): \|b\|_{{\rm BMO}({\mathbf{X}})}<\infty\}, \ \ \| b\|_{{\rm BMO}({\mathbf{X}})}:=\sup_{B}\frac{1}{w(B)}\int_B |b(\mathbf x)- b_B|\, dw(\mathbf{x}).\index[Other]{$ {\rm BMO}(\mathbf X)$@$ {\rm BMO}(\mathbf X)$}
 \end{align*}
Since the Hardy space $H^1_{\rm Dunkl}$ admits decomposition into Coifman-Weiss atoms described  in Section \ref{sec:atomic}, Theorem B in \cite{CW} states that  its dual can be identified with  ${\rm BMO}(\mathbf{X})$ (cf.  \cite[pages 142-144]{Stein-Harmonic}). To be more precise,  if $b\in {\rm BMO}(\mathbf{X})$, then the mapping 
\begin{equation*}
    H^1_{\rm Dunkl} \ni  f\longmapsto \langle b,f\rangle=\sum_{j}\lambda_j\int_{\mathbb{R}^N} b(\mathbf x)a_j(\mathbf x)\, dw(\mathbf x)
\end{equation*} does not depend on decompositions of $f=\sum_j \lambda_j a_j$ into $(1,q)$-atoms and defines a bounded  linear functional, that is, 
$$|\langle b,f\rangle |\leq C_{q} \|b\|_{{\rm BMO}(\mathbf X)}\| f\|_{H^1_{(1,q)}}.$$ 
Conversely, every bounded linear functional on $H^1_{\rm Dunkl}$ is of this type  and its norm is comparable to $\| b\|_{{\rm BMO}(\mathbf X)}$. 

Further, {keeping the terminology of \cite[page 638]{CW},} the ${\rm VMO}(\mathbf X)$ space is defined as the closure of the space $C_c(\mathbb R^N)$  of compactly supported continuous functions in the ${\rm BMO}(\mathbf X)$-norm. Theorem 4.1 of \cite{CW} asserts that the Hardy  space $H^1_{\rm Dunkl}$ is dual to ${\rm VMO}(\mathbf X)$, which means that each continuous linear functional on ${\rm VMO}(\mathbf X)$ has the form 
$$\langle f,b\rangle = \int_{\mathbb{R}^N} f(\mathbf x)b(\mathbf x)\, dw(\mathbf x) \quad \text{for all } b\in C_c(\mathbb R^N),$$
where $f\in H^1_{\rm Dunkl}$ and $\| f\|_{H^1_{\rm Dunkl}}$ is equivalent to the norm of the functional.

\subsection{Properties of
\texorpdfstring{${\rm BMO}(\mathbf X)$}{BMO(X)}
} In this subsection we collect elementary properties of ${\rm BMO}(\mathbf X)$ we shall need later on. We start by the following  well-known inequalities stated in the lemma below. 

\begin{lemma}\label{lem:log}
There is a constant $C>0$  such that for all $b\in L^1_{{\rm loc}}(dw)$, $r_1>r>0$, $\mathbf{x},\mathbf{y} \in \mathbb{R}^N$, and $\sigma \in G$, we have:
\begin{equation*}
    |b_{B(\mathbf{x},r)}-b_{B(\mathbf{x},r_1)}| \leq C\log\Big(\frac{r_1}{r}+4\Big)\|b\|_{{\rm BMO}(\mathbf{X})},
\end{equation*}

\begin{equation}\label{eq:not-far}
    |b_{B(\mathbf{x},r)}-b_{B(\mathbf{y},r)}| \leq C\|b\|_{{\rm BMO}(\mathbf{X})} \quad \text{\rm provided } \|\mathbf x-\mathbf y\|\leq 2r,
\end{equation}

\begin{equation}\label{eq:log_sigma}
    |b_{B(\mathbf{x},r)}-b_{B(\sigma(\mathbf{x}),r)}| \leq C\log\left(\frac{\|\sigma(\mathbf{x})-\mathbf{x}\|}{r}+4\right)\|b\|_{{\rm BMO}(\mathbf{X})},
\end{equation}
(John--Nirenberg inequality) for any $1\leq s<\infty$ there is $C_s>0$ such that for all positive integers $j$ we have
\begin{equation}\label{eq:John}
   \Big(\frac{1}{w(B(\mathbf x,2^jr))} \int_{B(\mathbf x,2^jr)}|b(\mathbf y)-b_{B(\mathbf x,r)}|^s\, dw(\mathbf y) \Big)^{1/s}\leq C_s j \|b\|_{\rm BMO(\mathbf{X})}.
\end{equation}
\end{lemma}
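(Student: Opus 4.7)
All four inequalities are classical facts about $\mathrm{BMO}$ on a space of homogeneous type, and $\mathbf X$ was identified as such a space in the Preliminaries via the doubling property \eqref{eq:doubling} and the growth bound \eqref{eq:growth}. The plan is to derive the first three by a telescoping argument based on the doubling property, and the fourth from the classical John--Nirenberg inequality on spaces of homogeneous type combined with the first.

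For the first inequality, choose the integer $n\geq 0$ with $2^n r\leq r_1 < 2^{n+1}r$, set $r_k=2^kr$ for $k=0,1,\dots,n+1$, and also put $r_{n+2}=r_1$. For consecutive indices $k,k+1$ the ball $B(\mathbf x, r_k)$ is contained in $B(\mathbf x, r_{k+1})$ and the doubling property gives $w(B(\mathbf x,r_{k+1}))\leq Cw(B(\mathbf x,r_k))$, so
\begin{equation*}
|b_{B(\mathbf x,r_k)}-b_{B(\mathbf x,r_{k+1})}|\leq \frac{1}{w(B(\mathbf x,r_k))}\int_{B(\mathbf x,r_{k+1})}|b-b_{B(\mathbf x,r_{k+1})}|\,dw\leq C\|b\|_{\mathrm{BMO}(\mathbf X)}.
\end{equation*}
A telescoping sum of $n+2\leq C\log(r_1/r+4)$ such terms yields the claim. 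The second inequality \eqref{eq:not-far} follows from the same one-step argument applied to $B(\mathbf x,r)\cup B(\mathbf y,r)\subseteq B(\mathbf x,4r)$ together with its counterpart centered at $\mathbf y$, since doubling gives $w(B(\mathbf x,4r))\sim w(B(\mathbf x,r))\sim w(B(\mathbf y,r))$.

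For \eqref{eq:log_sigma}, set $R=\|\sigma(\mathbf x)-\mathbf x\|+r$ and insert $b_{B(\mathbf x,R)}$ and $b_{B(\sigma(\mathbf x),R)}$ into the difference. The first and third terms are bounded by $C\log(R/r+4)\|b\|_{\mathrm{BMO}(\mathbf X)}$ by the first inequality, while the middle term is controlled by \eqref{eq:not-far} because the two centers are within distance $2R$. This yields the announced logarithmic estimate.

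For the John--Nirenberg inequality \eqref{eq:John}, invoke the classical exponential decay estimate on spaces of homogeneous type: there are constants $c_1,c_2>0$ such that for every ball $B$ and every $\lambda>0$,
\begin{equation*}
w\bigl(\{\mathbf y\in B:\,|b(\mathbf y)-b_B|>\lambda\}\bigr)\leq c_1\exp(-c_2\lambda/\|b\|_{\mathrm{BMO}(\mathbf X)})\,w(B).
\end{equation*}
Integrating this tail bound in $\lambda^s$ shows that for each $s\in[1,\infty)$,
\begin{equation*}
\Bigl(\frac{1}{w(B)}\int_B|b-b_B|^s\,dw\Bigr)^{1/s}\leq C_s\|b\|_{\mathrm{BMO}(\mathbf X)}.
\end{equation*}
Apply this to $B=B(\mathbf x,2^jr)$ and then replace $b_{B(\mathbf x,2^jr)}$ by $b_{B(\mathbf x,r)}$ via the triangle inequality, using the first inequality of the lemma to bound the correction by $C\log(2^j+4)\|b\|_{\mathrm{BMO}(\mathbf X)}\leq Cj\|b\|_{\mathrm{BMO}(\mathbf X)}$. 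The only nontrivial ingredient is the exponential John--Nirenberg inequality, but that is standard on spaces of homogeneous type (see for instance the Coifman--Weiss reference~\cite{CW} already used here), so no genuinely new difficulty arises.
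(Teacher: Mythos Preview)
Your proof is correct and follows the standard telescoping/doubling argument one expects for these well-known BMO facts on spaces of homogeneous type. The paper actually gives no proof of this lemma at all, simply introducing it as ``well-known inequalities,'' so your write-up supplies exactly the kind of routine verification the paper omits.
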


\begin{proposition}\label{propo:log_app}
Let $\delta_1>0$. There is a constant $C>0$ such that for all $r>t>0$, $\mathbf{x} \in B(\mathbf{x}_0,r)$, and any locally integrable functions $f$ we have
\begin{equation}\label{eq:log_app}
\begin{split}
    \int\limits_{\mathcal O(B(\mathbf{x}_0,2r))^c}
    &\Big(1+\frac{\|\mathbf{x}-\mathbf{y}\|}{t}\Big)^{-\delta_1}
    \frac{1}{w(B(\mathbf{x},d(\mathbf{x},\mathbf{y})+t))}|f(\mathbf{y})-f_{B(\mathbf{x}_0,2r)}|\,dw(\mathbf{y}) \\
    &\leq C \left(\frac{t}{r}\right)^{\delta_1/4}\|f\|_{{\rm BMO}(\mathbf{X})}.
    \end{split}
\end{equation}
\end{proposition}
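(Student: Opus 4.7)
My plan is to reduce the bound to a convergent geometric series via a two-parameter dyadic decomposition that simultaneously tracks the Euclidean distance $\|\mathbf{x}-\mathbf{y}\|$ and the $G$-orbit distance $d(\mathbf{x},\mathbf{y})$. On the integration region one has $d(\mathbf{x}_0,\mathbf{y})\geq 2r$, and since each $\sigma\in G$ is a Euclidean isometry,
\begin{equation*}
d(\mathbf{x},\mathbf{y})=\min_{\sigma\in G}\|\sigma(\mathbf{x})-\mathbf{y}\|\geq \min_{\sigma}\|\sigma(\mathbf{x}_0)-\mathbf{y}\|-\|\mathbf{x}-\mathbf{x}_0\|\geq 2r-r=r,
\end{equation*}
so $\|\mathbf{x}-\mathbf{y}\|\geq d(\mathbf{x},\mathbf{y})\geq r>t$ everywhere on the domain.

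I partition the region into
\begin{equation*}
A_{j,k}=\{\mathbf{y}\in\mathcal{O}(B(\mathbf{x}_0,2r))^c:2^{j}r\leq\|\mathbf{x}-\mathbf{y}\|<2^{j+1}r,\ 2^{k}r\leq d(\mathbf{x},\mathbf{y})<2^{k+1}r\},\qquad 0\leq k\leq j.
\end{equation*}
On $A_{j,k}$ the prefactor obeys $(1+\|\mathbf{x}-\mathbf{y}\|/t)^{-\delta_1}\leq (t/r)^{\delta_1}2^{-j\delta_1}$ and $w(B(\mathbf{x},d(\mathbf{x},\mathbf{y})+t))\geq w(B(\mathbf{x},2^{k}r))$. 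The crucial volumetric observation is that $A_{j,k}\subseteq\bigcup_{\sigma\in G}B(\sigma(\mathbf{x}),2^{k+1}r)$, whose $dw$-measure is at most $|G|\,w(B(\mathbf{x},2^{k+1}r))$ by $G$-invariance of $dw$; combined with doubling this yields $\int_{A_{j,k}}dw(\mathbf{y})/w(B(\mathbf{x},d(\mathbf{x},\mathbf{y})+t))\leq C$.

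To handle the BMO oscillation I split
\begin{equation*}
|f(\mathbf{y})-f_{B(\mathbf{x}_0,2r)}|\leq|f(\mathbf{y})-f_{B(\mathbf{x},2^{j+1}r)}|+|f_{B(\mathbf{x},2^{j+1}r)}-f_{B(\mathbf{x}_0,2r)}|,
\end{equation*}
controlling the second summand by $C(j+1)\|f\|_{{\rm BMO}(\mathbf{X})}$ via Lemma~\ref{lem:log}. For the oscillatory part I pick $s>\mathbf{N}/\delta_1$ and apply H\"older together with the $L^s$-John--Nirenberg inequality on $B(\mathbf{x},2^{j+1}r)$; after dividing through by $w(B(\mathbf{x},2^{k}r))$ and invoking \eqref{eq:growth}, the contribution of $A_{j,k}$ is at most $C_{s}2^{(j-k)\mathbf{N}/s}\|f\|_{{\rm BMO}(\mathbf{X})}$.

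Assembling everything,
\begin{equation*}
\text{LHS}\leq C_{s}\|f\|_{{\rm BMO}(\mathbf{X})}\Big(\frac{t}{r}\Big)^{\delta_1}\sum_{j=0}^{\infty}2^{-j\delta_1}\sum_{k=0}^{j}\bigl(2^{(j-k)\mathbf{N}/s}+(j+1)\bigr),
\end{equation*}
and with $s>\mathbf{N}/\delta_1$ both geometric sums converge, producing the bound $C(t/r)^{\delta_1}\|f\|_{{\rm BMO}(\mathbf{X})}$, which, since $t<r$, is stronger than the required $C(t/r)^{\delta_1/4}\|f\|_{{\rm BMO}(\mathbf{X})}$. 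The main obstacle is precisely the mismatch between the Euclidean decay factor and the orbit-distance volume factor: a naive single-scale decomposition in $\|\mathbf{x}-\mathbf{y}\|$ alone produces a divergent $2^{j\mathbf{N}}$ blow-up through the $L^1$-BMO bound, and it is the refinement to two scales combined with the $L^s$-John--Nirenberg inequality that tames this growth.
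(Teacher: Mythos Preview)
Your proof is correct and in fact yields the sharper decay $(t/r)^{\delta_1}$. Your approach differs from the paper's: the paper performs a \emph{single} dyadic decomposition of the domain into orbit-annuli $V_n=\mathcal O(B(\mathbf x_0,2^{n+1}t))\setminus\mathcal O(B(\mathbf x_0,2^{n}t))$ at scale $t$ (with $n\ge\lfloor\log_2(r/t)\rfloor$), then on each $V_n$ splits $|f(\mathbf y)-f_{B(\mathbf x_0,2r)}|$ through the intermediate averages $f_{B(\sigma(\mathbf x_0),2^{n+1}t)}$ and $f_{B(\mathbf x_0,2^{n+1}t)}$, and trades part of the Euclidean decay for a factor $(4+\|\sigma(\mathbf x_0)-\mathbf x_0\|/t)^{-\delta_1/4}$ via a triangle-inequality estimate, which then absorbs the logarithmic growth coming from \eqref{eq:log_sigma}. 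Because only a quarter of the exponent survives this trade, the paper obtains $(t/r)^{\delta_1/4}$. Your two-parameter decomposition in both $\|\mathbf x-\mathbf y\|$ and $d(\mathbf x,\mathbf y)$, combined with the $L^s$ John--Nirenberg inequality for $s>\mathbf N/\delta_1$, sidesteps the need to split the exponent and keeps the full power $(t/r)^{\delta_1}$. The paper's route is slightly more hands-on with the $G$-orbit geometry (explicitly tracking $\|\sigma(\mathbf x_0)-\mathbf x_0\|$), while yours is more systematic and sharper; both are valid.
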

\begin{proof} We split the integral in \eqref{eq:log_app} as follows
\begin{align*}
     \int\limits_{\mathcal O(B(\mathbf{x}_0,2r))^c}
    &\Big(1+\frac{\|\mathbf{x}-\mathbf{y}\|}{t}\Big)^{-\delta_1}
    \frac{1}{w(B(\mathbf{x},d(\mathbf{x},\mathbf{y})+t))}|f(\mathbf{y})-f_{B(\mathbf{x}_0,2r)}|\,dw(\mathbf{y})\\& \leq \sum_{n=\lfloor\log_2(r/t)\rfloor}^{\infty}\int_{V_n} \ldots,
\end{align*}
where 
\begin{equation*}
    V_n:=(\mathcal O(B(\mathbf{x}_0,2r))^c) \cap \mathcal{O}(B(\mathbf{x}_0,2^{n+1}t)) \cap (\mathcal{O}(B(\mathbf{x}_0,2^{n}t)))^c \text{ for }n \in \mathbb{N}_0, \, n \geq \lfloor \log_2(r/t)\rfloor .
\end{equation*}

If $\sigma \in G$, $\mathbf{y} \in B(\sigma(\mathbf{x}_0),2^{n+1}t) \setminus B(\mathbf{x}_0,2r)$ and $\mathbf{x} \in B(\mathbf{x}_0,r)$,  then 
\begin{equation*}
   2^{n+1}t+2\|\mathbf{x}-\mathbf{y}\|\geq  2^{n+1}t+\|\mathbf{x}-\mathbf{y}\|+r\geq \|\sigma(\mathbf{x}_0)-\mathbf{y}\|+\|\mathbf{x}-\mathbf{y}\|+\|\mathbf{x}-\mathbf{x}_0\|\geq\|\sigma(\mathbf{x}_0)-\mathbf{x}_0\|
\end{equation*}
{ and, consequently, 
\begin{equation}\label{eq:logg}
    \Big(4+\frac{\| \mathbf x-\mathbf y\|}{t}\Big)^{-\delta_1/4}\leq C 2^{n\delta_1/4} \Big(4+\frac{\| \mathbf x_0-\sigma (\mathbf x_0)\|}{t}\Big)^{-\delta_1/4}.
\end{equation}
}
Moreover, since $n \geq \lfloor\log_2(r/t) \rfloor$, for $\mathbf{y} \in V_n$ and $\mathbf{x} \in B(\mathbf{x}_0,r)$, we have
$$w(B(\mathbf{x},d(\mathbf{x},\mathbf{y})+t)) \sim w(B(\sigma(\mathbf{x}_0),2^{n}t))\quad \text{\rm for all } \sigma\in G.$$
 Therefore,  applying~\eqref{eq:logg} Lemma~\ref{lem:log}, we obtain
\begin{equation*}
    \begin{split}
         &\int_{V_n}\Big(1+\frac{\|\mathbf{x}-\mathbf{y}\|}{t}\Big)^{-\delta_1}
    \frac{1}{w(B(\mathbf{x},d(\mathbf{x},\mathbf{y})+t))}|f(\mathbf{y})-f_{B(\mathbf{x}_0,2r)}|\,dw(\mathbf{y}) \\
         &\leq  C2^{-3n\delta_1/4}\sum_{\sigma \in G}\int_{B(\sigma(\mathbf{x}_0),2^{n+1}t){\setminus B(\mathbf x_0,2r)}}\Big(1+\frac{\|\mathbf{x}-\mathbf{y}\|}{t}\Big)^{-\delta_1/4}\frac{1}{w(B(\sigma(\mathbf{x}_0),2^{n}t))}|f(\mathbf{y})-f_{B(\mathbf{x}_0,2r)}|\,dw(\mathbf{y})\\
         &\leq C2^{-3n\delta_1/4}\sum_{\sigma \in G}\int_{B(\sigma(\mathbf{x}_0),2^{n+1}t)}\frac{1}{w(B(\sigma(\mathbf{x}_0),2^{n}t))}|f(\mathbf{y})-f_{B(\sigma(\mathbf{x}_0),2^{n+1}t)}|\,dw(\mathbf{y})\\
         &+C2^{-3n\delta_1/4}2^{n\delta_1/4}\sum_{\sigma \in G}\int_{B(\sigma(\mathbf{x_{0}}),2^{n+1}t)}\left(\frac{\|\sigma(\mathbf{x}_0)-\mathbf{x}_0\|}{t}+4\right)^{-\delta_1/4}\\
         & \hskip4cm \times \frac{1}{w(B(\sigma(\mathbf{x}_0),2^{n}t))}|f_{B(\sigma (\mathbf{x}_0),2^{n+1}t)}-f_{B(\mathbf{x}_0,2^{n+1}t)}|\,dw(\mathbf{y})\\
         &+C2^{-3n\delta_1/4}\sum_{\sigma \in G}\int_{B(\sigma(\mathbf{x}_0),2^{n+1}t)}\frac{1}{w(B(\sigma(\mathbf{x}_0),2^{n}t))}|f_{B(\mathbf{x_{0}},2^{n+1}t)}-f_{B(\mathbf{x}_0,2r)}|\,dw(\mathbf{y})\\&\leq C2^{-n\delta_1/2}(n+1)\|f\|_{{\rm BMO}(\mathbf{X})} \leq C2^{-n\delta_1/4}\|f\|_{{\rm BMO}(\mathbf{X})}. 
    \end{split}
\end{equation*}
Finally, 
\begin{equation*}
    \sum_{n=\lfloor \log_{2}(r/t)\rfloor}^{\infty}2^{-n\delta_1/4}\|f\|_{{\rm BMO}(\mathbf{X})} \leq C'' \left(\frac{{t}}{r}\right)^{\delta_1/4}\|f\|_{{\rm BMO}(\mathbf{X})},  
\end{equation*}
so~\eqref{eq:log_app} is proved.
\end{proof}

{The following simple lemma will be used in our further considerations. For the convenience of the reader, we present the proof.

\begin{lemma}\label{lem:BMO_compactly_supported}
    Let $1\leq p<\infty$. There is a constant $C_{J{\text{-}}N,p}>0$ such that for all $\mathbf{x}_0 \in \mathbb{R}^N$, $r>0$, and $f \in L^p(dw)$ such that $\supp f \subseteq B(\mathbf{x}_0,r)$ we have
    \begin{equation*}
        \|f\|_{L^p(dw)} \leq C_{J{\text{-}}N,p} w(B(\mathbf{x}_0,r))^{1/p}\| f\|_{{\rm BMO}(\mathbf X)}.
    \end{equation*}
\end{lemma}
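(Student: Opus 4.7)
The plan is to split $f$ on $B(\mathbf{x}_0,r)$ into its oscillation $f-f_{B(\mathbf{x}_0,r)}$ and its average $f_{B(\mathbf{x}_0,r)}$, control the oscillation directly by the $L^p$ John--Nirenberg inequality \eqref{eq:John}, and then extract a bound on the scalar $|f_{B(\mathbf{x}_0,r)}|$ itself by exploiting the fact that $f$ vanishes on a fixed proportion of a sufficiently enlarged ball.

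First I would record that, since $\supp f\subseteq B(\mathbf{x}_0,r)$ and $f\in L^p(dw)$, Hölder's inequality gives $f\in L^1(dw)$, so $f_{B(\mathbf{x}_0,R)}$ is defined for every $R>0$. Writing $f=(f-f_{B(\mathbf{x}_0,r)})+f_{B(\mathbf{x}_0,r)}$ on $B(\mathbf{x}_0,r)$ and using $(a+b)^p\leq 2^{p-1}(a^p+b^p)$, we get
\begin{equation*}
\|f\|_{L^p(dw)}^p
\leq 2^{p-1}\!\int_{B(\mathbf{x}_0,r)}\!|f-f_{B(\mathbf{x}_0,r)}|^p\,dw + 2^{p-1}\,w(B(\mathbf{x}_0,r))\,|f_{B(\mathbf{x}_0,r)}|^p.
\end{equation*}
The first integral is $\lesssim w(B(\mathbf{x}_0,r))\|f\|_{{\rm BMO}(\mathbf{X})}^p$ directly by \eqref{eq:John} with $j=1$ (restricting the resulting bound on $B(\mathbf{x}_0,2r)$ to $B(\mathbf{x}_0,r)$ and using doubling \eqref{eq:doubling}).

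The main point is therefore to show $|f_{B(\mathbf{x}_0,r)}|\leq C\|f\|_{{\rm BMO}(\mathbf{X})}$. For this I would choose an integer $k$ (depending only on the doubling/growth constants) such that the lower bound in \eqref{eq:growth}, namely $w(B(\mathbf{x}_0,2^k r))\geq C^{-1}2^{kN}w(B(\mathbf{x}_0,r))$, yields
\begin{equation*}
w\!\left(B(\mathbf{x}_0,2^k r)\setminus B(\mathbf{x}_0,r)\right)\geq \tfrac{1}{2}\,w(B(\mathbf{x}_0,2^k r)).
\end{equation*}
Since $f\equiv 0$ on $B(\mathbf{x}_0,2^k r)\setminus B(\mathbf{x}_0,r)$, on that annular region $|f-f_{B(\mathbf{x}_0,r)}|=|f_{B(\mathbf{x}_0,r)}|$. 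Applying the $L^p$ John--Nirenberg inequality \eqref{eq:John} with $s=p$ and $j=k$ to the ball $B(\mathbf{x}_0,r)$ and integrating over $B(\mathbf{x}_0,2^k r)$ gives
\begin{equation*}
|f_{B(\mathbf{x}_0,r)}|^p\,w\!\left(B(\mathbf{x}_0,2^k r)\setminus B(\mathbf{x}_0,r)\right)\leq C_p^{\,p}k^p\,w(B(\mathbf{x}_0,2^k r))\,\|f\|_{{\rm BMO}(\mathbf{X})}^p,
\end{equation*}
and the choice of $k$ forces $|f_{B(\mathbf{x}_0,r)}|\leq 2^{1/p}C_p k\,\|f\|_{{\rm BMO}(\mathbf{X})}$. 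Combining the two estimates proves the lemma.

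The only subtlety is the selection of $k$: it must be chosen so that the enlarged ball is strictly larger in measure than the support ball. The homogeneous-type lower growth bound in \eqref{eq:growth} makes this automatic, so the argument is self-contained and the constant depends only on $p$, $N$, and the doubling/growth constants of $dw$.
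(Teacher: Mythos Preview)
Your proof is correct and follows the same overall architecture as the paper's: split $f$ on $B(\mathbf{x}_0,r)$ into its oscillation and its average, handle the oscillation by John--Nirenberg \eqref{eq:John}, and reduce matters to bounding $|f_{B(\mathbf{x}_0,r)}|$ by $\|f\|_{{\rm BMO}(\mathbf X)}$.

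The only difference is how that average is controlled. You enlarge to $B(\mathbf{x}_0,2^kr)$, use the growth bound \eqref{eq:growth} to guarantee that the annulus where $f\equiv 0$ carries a fixed fraction of the measure, and then read off $|f_{B(\mathbf{x}_0,r)}|$ from \eqref{eq:John} on that annulus. The paper instead picks a ball $B(\mathbf y,r)$ with $\|\mathbf y-\mathbf x_0\|=2r$, so that $f_{B(\mathbf y,r)}=0$ (the ball lies outside the support up to a null set), and then applies \eqref{eq:not-far} directly to get $|f_{B(\mathbf{x}_0,r)}|=|f_{B(\mathbf{x}_0,r)}-f_{B(\mathbf y,r)}|\leq C\|f\|_{{\rm BMO}(\mathbf X)}$. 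The paper's route is a one-line application of a lemma already stated, while yours is self-contained and does not require \eqref{eq:not-far}; both yield the same constant dependence on the doubling data.
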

\begin{proof}
    The fact that $\supp f \subseteq B(\mathbf{x}_0,r)$ and \eqref{eq:not-far} give $|f_{B(\mathbf x_0,r)}|\leq C\|f\|_{{\rm BMO}(\mathbf X)}$. Hence, for $1\leq p<\infty$,  
\begin{equation*}
\begin{split}
\| f\|_{L^p(dw)} &\leq \Big(\int_{B(\mathbf x_0,r)}(|f(\mathbf x)-f_{B(\mathbf x_0,r)}|+|f_{B(\mathbf x_0,r)}|)^p\, dw(\mathbf x)\Big)^{1/p}\\
&\leq \Big(\int_{B(\mathbf x_0,r)}(|f(\mathbf x)-f_{B(\mathbf x_0,r)}|^p\, dw(\mathbf x)\Big)^{1/p} + C\| f\|_{{\rm BMO}(\mathbf X)} w(B(\mathbf x_0, r))^{1/p}\\
&\leq C_{J{\text{-}}N,p} w(B(\mathbf x_0, r))^{1/p} \| f\|_{{\rm BMO}(\mathbf X)}, 
\end{split} 
\end{equation*}
where in the last inequality we have used the John-Nirenberg inequality \eqref{eq:John}. 
\end{proof}
}

\subsection{\texorpdfstring{${\rm BMO}(\mathbf X)$}{BMO(X)} and Carleson measure}

\begin{theorem}\label{teo:Carleson}
Assume that
 $\varphi \in L^1(dw)$ is such that $\mathcal{F}\varphi \in L^1(dw)$ and the functions satisfy  the following properties:
\begin{equation*}
    \int_{\mathbb{R}^N}\varphi(\mathbf{x})\,dw(\mathbf{x})=0,
\end{equation*}
\begin{equation}\label{prop2}
    \sup_{\xi \in \mathbb{R}^N, \xi \neq 0}\int_0^{\infty}|\mathcal{F}\varphi(t\xi)|^2\frac{dt}{t}<\infty,
\end{equation}
there are constants $C>0$ and $\delta>0$ such that for all $\mathbf{x},\mathbf{y} \in \mathbb{R}^N$ and $t>0$ we have
\begin{equation}\label{prop3}
    |\varphi_t(\mathbf{x},\mathbf{y})| \leq C\Big(1+\frac{\|\mathbf{x}-\mathbf{y}\|}{t}\Big)^{-\delta}\frac{1}{w(B(\mathbf{x},d(\mathbf{x},\mathbf{y})+t))}.
\end{equation}
Then there is a constant $C>0$ such that for all $b\in {\rm BMO}(\mathbf{X})$ and all balls $B\subset \mathbb R^N$ we have 
\begin{equation*}
     \int_0^r\int_B |b*\varphi_t (\mathbf{x})|^2\frac{dw(\mathbf{x})\,dt}{t}\leq C^2\| b\|_{{\rm BMO}(\mathbf{X})}^2 w(B).
\end{equation*}
\end{theorem}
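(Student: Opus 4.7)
The plan is to run the classical Carleson/${\rm BMO}$ template on the space of homogeneous type $\mathbf X$, carefully adapted to the orbit geometry produced by the Dunkl translation. Fix $B=B(\mathbf{x}_0,r)$ and put $\tilde B = B(\mathbf{x}_0, 2r)$. The bound~\eqref{prop3} ensures $\tau_{\mathbf{x}}\varphi_t\in L^1(dw)$, so~\eqref{eq:int-preserv} together with the vanishing-mean hypothesis gives $b_{\tilde B}*\varphi_t\equiv 0$; hence I may replace $b$ by $b-b_{\tilde B}$ and split $b-b_{\tilde B}=\beta_1+\beta_2$ with $\beta_1=(b-b_{\tilde B})\chi_{\tilde B}$ and $\beta_2=(b-b_{\tilde B})\chi_{\tilde B^c}$, estimating the two contributions separately.

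For $\beta_1$ I would invoke Plancherel~\eqref{eq:Plancherel} together with $\mathcal F(\beta_1*\varphi_t)(\xi)=c_k\mathcal F\beta_1(\xi)\mathcal F\varphi(t\xi)$ and the Calder\'on-type condition~\eqref{prop2}: Fubini gives
\begin{equation*}
\int_0^\infty\!\int_{\mathbb{R}^N}|\beta_1*\varphi_t(\mathbf{x})|^2\,\frac{dw(\mathbf{x})\,dt}{t}=c_k^2\!\int_{\mathbb{R}^N}|\mathcal F\beta_1(\xi)|^2\!\int_0^\infty\!|\mathcal F\varphi(t\xi)|^2\frac{dt}{t}\,dw(\xi)\le M\|\beta_1\|_{L^2(dw)}^2,
\end{equation*}
where $M$ is the supremum in~\eqref{prop2}. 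The John--Nirenberg inequality~\eqref{eq:John} together with the doubling property~\eqref{eq:doubling} then bounds $\|\beta_1\|_{L^2(dw)}^2\le Cw(\tilde B)\|b\|_{{\rm BMO}(\mathbf{X})}^2\le C'w(B)\|b\|_{{\rm BMO}(\mathbf{X})}^2$.

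For $\beta_2$ I would insert~\eqref{prop3} into the Dunkl convolution integral: for $\mathbf{x}\in B$,
\begin{equation*}
|\beta_2*\varphi_t(\mathbf{x})|\le C\!\int_{\tilde B^c}\!|b(\mathbf{y})-b_{\tilde B}|\Big(1+\tfrac{\|\mathbf{x}-\mathbf{y}\|}{t}\Big)^{-\delta}\!\frac{dw(\mathbf{y})}{w(B(\mathbf{x},d(\mathbf{x},\mathbf{y})+t))},
\end{equation*}
and split $\tilde B^c=\mathcal O(\tilde B)^c\cup\bigcup_{\sigma\ne\mathrm{id}}(\sigma(\tilde B)\setminus\tilde B)$. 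Proposition~\ref{propo:log_app} with $\delta_1=\delta$ handles the piece over $\mathcal O(\tilde B)^c$, producing a bound $C(t/r)^{\delta/4}\|b\|_{{\rm BMO}(\mathbf{X})}$. On each piece $\sigma(\tilde B)\setminus\tilde B$ one has $d(\mathbf{x},\mathbf{y})\lesssim r$, so the volume factor is $\gtrsim w(B)^{-1}$, while $\|\mathbf{x}-\mathbf{y}\|$ is comparable to $\max(r,\|\mathbf{x}_0-\sigma(\mathbf{x}_0)\|)$; combining the Euclidean decay factor with the logarithmic control $|b_{\sigma(\tilde B)}-b_{\tilde B}|\lesssim\log(\|\mathbf{x}_0-\sigma(\mathbf{x}_0)\|/r+4)\|b\|_{{\rm BMO}(\mathbf{X})}$ provided by~\eqref{eq:log_sigma} and~\eqref{eq:not-far}, and using that $|G|<\infty$, yields the same bound. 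Therefore $|\beta_2*\varphi_t(\mathbf{x})|\le C(t/r)^{\delta/4}\|b\|_{{\rm BMO}(\mathbf{X})}$ on $B\times(0,r]$, and
\begin{equation*}
\int_0^r\!\int_B|\beta_2*\varphi_t|^2\,\frac{dw\,dt}{t}\le Cw(B)\|b\|_{{\rm BMO}(\mathbf{X})}^2\!\int_0^r(t/r)^{\delta/2}\frac{dt}{t}\lesssim w(B)\|b\|_{{\rm BMO}(\mathbf{X})}^2.
\end{equation*}

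The hardest step I anticipate is the middle-distance regime $\mathcal O(\tilde B)\setminus\tilde B$: there $d(\mathbf{x},\mathbf{y})$ stays at scale $r$ while $\|\mathbf{x}-\mathbf{y}\|$ can be arbitrarily large, so the estimate must delicately play the Euclidean decay $(1+\|\mathbf{x}-\mathbf{y}\|/t)^{-\delta}$ against the logarithmic ${\rm BMO}$-growth of $|b_{\sigma(\tilde B)}-b_{\tilde B}|$ and verify that summing over the reflection group does not spoil the $(t/r)^{\delta/4}$ gain on which convergence of the $dt/t$ integration rests; in particular the orbit-aware version of Proposition~\ref{propo:log_app} (and not merely an estimate over $(\tilde B)^c$) is essential.
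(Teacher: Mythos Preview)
Your three-way split (near ball, orbit complement, reflected copies) matches the paper's architecture, and the treatment of $\beta_1$ via Plancherel and of the orbit-complement piece via Proposition~\ref{propo:log_app} is exactly right. The gap is in the middle region $\sigma(\tilde B)\setminus\tilde B$. You assert that there ``$d(\mathbf x,\mathbf y)$ stays at scale $r$'' and hence ``the volume factor is $\gtrsim w(B)^{-1}$''; this is false. For $\mathbf x\in B$ and $\mathbf y\in\sigma(\tilde B)$ one has $\sigma^{-1}(\mathbf y)\in\tilde B$, so $d(\mathbf x,\mathbf y)\le\|\mathbf x-\sigma^{-1}(\mathbf y)\|$ can be arbitrarily small (even zero), and consequently $1/w(B(\mathbf x,d(\mathbf x,\mathbf y)+t))$ can be as large as $1/w(B(\mathbf x,t))$, not $1/w(B)$. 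With only the decay $(1+\|\mathbf x-\mathbf y\|/t)^{-\delta}$ for an arbitrary $\delta>0$, a pointwise bound on $\int_{\sigma(\tilde B)\setminus\tilde B}|b(\mathbf y)-b_{\tilde B}|\,|\varphi_t(\mathbf x,\mathbf y)|\,dw(\mathbf y)$ does not follow from the ingredients you list; in particular you never say what happens to the oscillating piece $b-b_{\sigma(\tilde B)}$, only to the constant shift $b_{\sigma(\tilde B)}-b_{\tilde B}$.

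The paper's remedy is precisely to \emph{not} treat that oscillating piece pointwise: it enlarges the ``near'' bucket to all of $\mathcal O(B^*)$, partitions it into pieces $U_j\subseteq\sigma_j(B^*)$, and for each $j$ writes $(b-b_{B^*})\chi_{U_j}=(b-b_{\sigma_j(B^*)})\chi_{U_j}+(b_{\sigma_j(B^*)}-b_{B^*})\chi_{U_j}$. The first term is pushed through Plancherel exactly as you did for $\beta_1$ (John--Nirenberg on $\sigma_j(B^*)$ and $G$-invariance of $dw$ give $\|(b-b_{\sigma_j(B^*)})\chi_{U_j}\|_{L^2(dw)}^2\lesssim w(B)\|b\|_{\rm BMO}^2$), while only the \emph{constant} second term is estimated pointwise via the Euclidean decay and~\eqref{eq:log_sigma}. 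In short: your $\beta_1$ should contain the re-centred oscillations on every reflected copy of the ball, not just on $\tilde B$ itself.
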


\begin{remark}\normalfont
    The inequality \eqref{eq:carleson1} means that the measure  
\begin{equation*}
   d \mu(\mathbf{x},t):=|b*\varphi_t (\mathbf{x})|^2\frac{dw(\mathbf{x})\,dt}{t} 
\end{equation*}
is a Carleson measure (related to $dw$) with its Carleson norm bounded by $C^2\| f\|_{{\rm BMO}(\mathbf X)}^2 $. 
\end{remark}

\begin{proof}[Proof of Theorem \ref{teo:Carleson}]
Fix a ball  $B=B(\mathbf{x}_0, r)$.  
Set $B^*=B(\mathbf{x}_0,6r)$.
We write all the elements of the group $G$ in a sequence  $\sigma_{0}={\text{\rm Id}}, \sigma_1,\sigma_2,\ldots,\sigma_{|G|-1}$. We inductively define  a partition of $\mathcal O(B^*)$ by the sets $U_j \subseteq \mathbb{R}^N$, $j=0,1,2,\ldots,|G|-1$:
\begin{equation*}
\begin{split}
&U_0:=B(\mathbf{x}_0,6r)=B^*,\\
   & U_1:=\{\mathbf{z}\in \mathbb{R}^N\;:\; \| \mathbf{z}-\mathbf{x}_0\|>6r, \ \|\mathbf{z}-\sigma_1(\mathbf{x}_0)\|\leq 6r\},\\
    &U_{j+1}:=\{\mathbf{z}\in \mathbb{R}^N\;:\; \| \mathbf{z}-\mathbf{x}_0\|>6r,\ \|\mathbf{z}-\sigma_{j+1}(\mathbf{x}_0)\|\leq 6r\}\setminus \left(\bigcup_{j_1=1}^jU_{j_1}\right) \text{ for }j \geq 1.
\end{split}
\end{equation*}  
We write
\begin{align*}
    b(\mathbf{y})&=b_{B^*}+(b(\mathbf{y})-b_{B^*})=b_{B^*}+(b(\mathbf{y})-b_{B^*})\chi_{(\mathcal O(B^*))^c}(\mathbf y)+\sum_{j=0}^{|G|-1} (b(\mathbf{y})-b_{B^*})\chi_{U_j}(\mathbf y)\\&=:f_1(\mathbf{y})+f_2(\mathbf{y})+\sum_{j=0}^{|G|-1} f_{\sigma_j}(\mathbf{y}).
\end{align*} 
It follows from~\eqref{eq:int-preserv} and our assumptions on $\varphi$ that $\int_{\mathbb{R}^N}\varphi_t(\mathbf{x},\mathbf{y})\,dw(\mathbf{y})=0$ for all $\mathbf{x} \in \mathbb{R}^N$.
Since $f_1$ is a constant function, $f_1*\varphi_t(\mathbf{x})=0$. 

Further, from property~\eqref{prop3} and Proposition~\ref{propo:log_app}, we conclude that for $\mathbf{x} \in B(\mathbf{x}_0,r)$ and $0<t\leq r$,  we have
\begin{align*}
&|\varphi_t*f_2(\mathbf{x})|=\left|\int_{\mathbb{R}^N}\varphi_t(\mathbf{x},\mathbf{y})f_2(\mathbf{y})\,{dw(\mathbf{y})}\right| \\&\leq C\int\limits_{\mathcal O(B(\mathbf{x}_0,2r))^c}
    \Big(1+\frac{\|\mathbf{x}-\mathbf{y}\|}{t}\Big)^{-\delta}
    \frac{1}{w(B(\mathbf{x},d(\mathbf{x},\mathbf{y})+t))}|f(\mathbf{y})-f_{B(\mathbf{x}_0,2r)}|\,dw(\mathbf{y}) \\&
\leq C \left(\frac{t}{r}\right)^{\delta/4} \| f\|_{{\rm BMO}(\mathbf X)}.
\end{align*}
So, 
\begin{equation*}
    \begin{split}
        \int_0^r\int_B |\varphi_t*f_2(\mathbf{x})|^2\frac{dw(\mathbf{x})dt}{t}\leq C^2\| b\|_{{\rm BMO}(\mathbf{X})}^2\int_0^r \int_B \Big(\frac{t}{r}\Big)^{\delta/4}\frac{dw(\mathbf{x})dt}{t}
    \end{split}\leq C^2\| b\|_{{\rm BMO}(\mathbf{X})}^2 w(B).
\end{equation*}
To deal with  $f_{\sigma_j}$, we assume that $U_j\ne \emptyset$.  We write 
$$ f_{\sigma_0}*\varphi_t (\mathbf{x})=((b-b_{B^*})\chi_{B^*})*\varphi_t(\mathbf{x}), $$
$$ f_{\sigma_j}*\varphi_t (\mathbf{x})=((b-b_{\sigma_j(B^*)})\chi_{U_j})*\varphi_t(\mathbf{x})+ ((b_{\sigma_j(B^*)}-b_{B^*})\chi_{U_j})*\varphi_t(\mathbf{x})\quad \text{for } j=1,2,..., |G|-1.$$
By Plancherel's equality~\eqref{eq:Plancherel}, the John-Nirenberg inequality~\eqref{eq:John}, and property~\eqref{prop2} ,
\begin{equation*}
    \begin{split}
        &\sum_{j=0}^{|G|-1} \int_0^r\int_B |((b-b_{\sigma_j(B^*)})\chi_{U_j})*\varphi_t(\mathbf{x})|^2\frac{dw(\mathbf{x})dt}{t}\\&\leq  \sum_{j=0}^{|G|-1} \int_0^\infty \int_{\mathbb R^N} |\mathcal F  ((b-b_{\sigma_j(B^*)})\chi_{U_j})(\xi)\mathcal F\varphi (t\xi)|^2 \frac{dw(\xi)dt}{t}\\
        &\leq  C
        \sum_{j=0}^{|G|-1} \| ((b-b_{\sigma_j(B^*)})\chi_{U_j})\|_{L^2(dw)}^2 \leq C'|G| \|b\|_{{\rm BMO}(\mathbf X)}^2 w(B).
    \end{split}
\end{equation*}

It remains to consider $(b_{\sigma_j(B^*)}-b_{B^*})\chi_{U_j}*\varphi_t$ for $j=1,2,...,|G|-1. $ To this end we note that by the definition of $U_j$  for $\mathbf y\in U_j$ and $\mathbf x\in B$, we have 
\begin{equation}\label{eq:const_term_1}
    \|\mathbf{y}-\mathbf{x}_0\|>6r, \quad \|\mathbf{x}-\mathbf{y}\|\sim (\| \sigma_j(\mathbf{x}_0)-\mathbf{x}_0\| +r).
\end{equation} So we conclude 
\begin{equation}\label{eq:const_term_2}
   | \chi_{U_j}*\varphi_t(\mathbf{x})|\leq C \int_{U_j} \Big(1+\frac{\|\mathbf{x}-\mathbf{y}\|}{t}\Big)^{-\delta}
    \frac{1}{w(B(\mathbf{x},d(\mathbf{x},\mathbf{y})+t))}\, dw(\mathbf{y})\leq C\left(\frac{t}{\| \sigma_j(\mathbf{x}_0)-\mathbf{x}_0\|+r}\right)^{\delta/2}. 
\end{equation}
Finally, using~\eqref{eq:const_term_1} and~\eqref{eq:const_term_2} together with and \eqref{eq:log_sigma} of Lemma~\ref{lem:log}, we arrive at 
\begin{equation*}
    \begin{split}
        \int_0^r\int_B |(b_{\sigma_j(B^*)}-b_{B^*})\chi_{U_j}*\varphi_t(\mathbf{x})|^2\frac{dw(\mathbf{x})dt}{t}\leq C^2\| b\|_{{\rm BMO}(\mathbf{X})}^2 w(B), 
    \end{split}
\end{equation*}
which completes the proof of Theorem \ref{teo:Carleson}.
\end{proof}

\begin{corollary}\label{coro:Carleson}
Assume that $\varphi\in C_c^\infty (B(0,1))$ and 
$ \int_{\mathbb{R}^N} \varphi (\mathbf{x})\, dw(\mathbf{x})=0.$
Then there is a constant $C>0$ such that for all $b\in {\rm BMO}(\mathbf{X})$ and all balls $B\subset \mathbb R^N$ we have 
\begin{equation}\label{eq:carleson1}
     \int_0^r\int_B |b*\varphi_t (\mathbf{x})|^2\frac{dw(\mathbf{x})\,dt}{t}\leq C^2\| b\|_{{\rm BMO}(\mathbf{X})}^2 w(B).
\end{equation}
\end{corollary}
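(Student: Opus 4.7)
The plan is to deduce the corollary directly from Theorem \ref{teo:Carleson} by verifying its four hypotheses for $\varphi \in C_c^\infty(B(0,1))$ with $\int \varphi\, dw = 0$. Since $\varphi$ is smooth and compactly supported, both $\varphi$ and $\mathcal{F}\varphi$ lie in the Schwartz class, hence in $L^1(dw)$. The vanishing mean condition is part of the hypothesis.

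For condition \eqref{prop2}, I would substitute $s = t\|\xi\|$ to obtain
\begin{equation*}
\int_0^\infty |\mathcal{F}\varphi(t\xi)|^2\,\frac{dt}{t} = \int_0^\infty \Big|\mathcal{F}\varphi\Big(s\tfrac{\xi}{\|\xi\|}\Big)\Big|^2\,\frac{ds}{s},
\end{equation*}
which depends only on $\xi/\|\xi\|$. Since $\mathcal{F}\varphi \in \mathcal{S}(\mathbb{R}^N)$ and $\mathcal{F}\varphi(0) = c_k^{-1}\int \varphi\, dw = 0$, Taylor's theorem gives $|\mathcal{F}\varphi(\eta)| \leq C\|\eta\|$ for $\|\eta\| \leq 1$, while Schwartz decay provides $|\mathcal{F}\varphi(\eta)| \leq C_M\|\eta\|^{-M}$ for large $\|\eta\|$ and any $M$. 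Splitting the integral at $s=1$ and using these two bounds yields uniform finiteness in $\xi \neq 0$.

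For the pointwise bound \eqref{prop3}, the key input is Theorem \ref{teo:translations_kernels}\eqref{translations_kernels:a} with $I = J = 0$ and some even $\kappa > \mathbf{N}$, which gives
\begin{equation*}
|\varphi_t(\mathbf{x},\mathbf{y})| \leq C_\kappa \|\varphi\|_{C^\kappa(\mathbb{R}^N)} \Big(1+\tfrac{\|\mathbf{x}-\mathbf{y}\|}{t}\Big)^{-1} w(B(\mathbf{x},t))^{-1}\chi_{[0,t]}(d(\mathbf{x},\mathbf{y})).
\end{equation*}
On the support $d(\mathbf{x},\mathbf{y}) \leq t$ one has $d(\mathbf{x},\mathbf{y})+t \leq 2t$, so the doubling property \eqref{eq:doubling} yields $w(B(\mathbf{x},t))^{-1} \leq C w(B(\mathbf{x}, d(\mathbf{x},\mathbf{y})+t))^{-1}$; outside the support $\varphi_t(\mathbf{x},\mathbf{y}) = 0$ (equivalently, use \eqref{eq:supp_transl} after scaling). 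This establishes \eqref{prop3} with $\delta = 1$.

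With all four hypotheses of Theorem \ref{teo:Carleson} verified, the estimate \eqref{eq:carleson1} follows immediately. There is no real obstacle here; the only point requiring minor care is the scaling argument for \eqref{prop2}, where one must use the mean-zero condition to obtain the linear vanishing of $\mathcal{F}\varphi$ at the origin.
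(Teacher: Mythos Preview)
Your proof is correct and follows exactly the same approach as the paper: the paper's proof simply states that the corollary follows from Theorem~\ref{teo:Carleson} and part~\eqref{translations_kernels:a} of Theorem~\ref{teo:translations_kernels}, and you have merely spelled out the verification of the remaining hypotheses (integrability, mean zero, and the multiplier condition~\eqref{prop2}) that the paper leaves implicit.
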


\begin{proof}
    The corollary follows  from  Theorem \ref{teo:Carleson} and part~\eqref{translations_kernels:a} of Theorem \ref{teo:translations_kernels}.
    
\end{proof}

Let 
\begin{equation*}
    \mathcal P_tf(\mathbf{x})=e^{-t\sqrt{-\Delta_k}} f(\mathbf{x})=f*p_t(\mathbf{x}),
\end{equation*}
     be the Dunkl-Poisson semigroup, where 
     $$ p(\mathbf{x})=c_{N,k}(1+\|\mathbf{x}\|^2)^{-(\mathbf  N+1)/2}$$
     is the $k$-Cauchy kernel 
     (see~\cite[Section 5]{ThangaveluXu},~\cite{Roesler-Voit}). Set 
     \begin{equation*}
         \tilde \nabla_k\mathcal P_tf(\mathbf{x})=(\partial_t \mathcal P_t f(\mathbf{x}), T_1\mathcal P_t f(\mathbf{x}),\dots,T_N\mathcal P_tf(\mathbf{x})).
     \end{equation*}

 \begin{corollary}\label{coro:Carleson_Poisson}
    There is a constant $C>0$ such that for all balls $B(\mathbf{x}_0,r)$ one has 
     $$ \int_0^r\int_{B(\mathbf{x}_0,r)} \|t\tilde\nabla_k \mathcal P_tf(\mathbf{x})\|^2 \, dw(\mathbf{x})\, \frac{dt}{t}\leq C w(B(\mathbf{x}_0,r))\| f\|^2_{\text{\rm BMO}  (\mathbf X)}.$$
 \end{corollary}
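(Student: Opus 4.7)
The plan is to decompose $\|t\tilde\nabla_k\mathcal P_tf(\mathbf x)\|^2=|t\partial_t\mathcal P_tf(\mathbf x)|^2+\sum_{j=1}^N|tT_j\mathcal P_tf(\mathbf x)|^2$ and to obtain the Carleson bound for each of the $N+1$ scalar terms by a single application of Theorem~\ref{teo:Carleson}; summing then yields the claim. Using the multiplier identities $\mathcal F(t\partial_t\mathcal P_tf)(\xi)=-t\|\xi\|e^{-t\|\xi\|}\mathcal Ff(\xi)$ and $\mathcal F(tT_j\mathcal P_tf)(\xi)=it\xi_je^{-t\|\xi\|}\mathcal Ff(\xi)$, each component is expressed as a Dunkl convolution $f*\psi^{(j)}_t$, where
\begin{equation*}
\psi^{(0)}:=(\partial_\tau p_\tau)|_{\tau=1},\qquad \psi^{(j)}:=(T_j p_\tau)|_{\tau=1}\ \text{for } j=1,\dots,N,
\end{equation*}
so that $\mathcal F\psi^{(0)}$ and $\mathcal F\psi^{(j)}$ are constant multiples of $-\|\xi\|e^{-\|\xi\|}$ and $i\xi_je^{-\|\xi\|}$ respectively; the scaling $\mathcal F(\psi^{(j)}_t)(\xi)=\mathcal F\psi^{(j)}(t\xi)$ delivers the representation.

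To apply Theorem~\ref{teo:Carleson}, I verify its hypotheses for each $\psi^{(j)}$. Using the Cauchy formula $p_t(\mathbf x)=c_{N,k}t(t^2+\|\mathbf x\|^2)^{-(\mathbf N+1)/2}$, together with the fact that $p_1$ is radial so that the difference part of $T_j$ vanishes and $T_jp_1=\partial_jp_1$, one sees that $\psi^{(j)}$ decays at least like $\|\mathbf x\|^{-(\mathbf N+1)}$ at infinity, hence lies in $L^1(dw)$; the exponential decay of $\mathcal F\psi^{(j)}$ gives $\mathcal F\psi^{(j)}\in L^1(dw)$. The mean-zero condition $\int\psi^{(j)}\,dw=c_k\mathcal F\psi^{(j)}(0)=0$ is immediate, and the integral condition~\eqref{prop2} reduces, via the substitution $u=t\|\xi\|$, to the uniform bound $\int_0^\infty u\,e^{-2u}\,du<\infty$.

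The technical heart of the argument is the pointwise bound~\eqref{prop3} for the translations $\psi^{(j)}_t(\mathbf x,\mathbf y)$, which coincide with $t\partial_tp_t(\mathbf x,\mathbf y)$ and $tT_j^{\mathbf x}p_t(\mathbf x,\mathbf y)$. My strategy is to exploit the subordination identity
\begin{equation*}
p_t(\mathbf x,\mathbf y)=\int_0^\infty\frac{t}{2\sqrt{\pi s^3}}\,e^{-t^2/(4s)}\,h_s(\mathbf x,\mathbf y)\,ds,
\end{equation*}
combined with the Gaussian bound~\eqref{eq:Gauss} of Theorem~\ref{theorem:heat} and, for the $T_j$-derivatives, an analogous gradient Gaussian bound for $\sqrt s\,T_j^{\mathbf x}h_s(\mathbf x,\mathbf y)$. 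Splitting the $s$-integral at $s\sim t^2+\|\mathbf x-\mathbf y\|^2$ and using $w(B(\mathbf x,\sqrt s))\lesssim(1+\sqrt s/(t+d(\mathbf x,\mathbf y)))^{\mathbf N}w(B(\mathbf x,t+d(\mathbf x,\mathbf y)))$ from~\eqref{eq:growth} gives a Poisson-type estimate
\begin{equation*}
|\psi^{(j)}_t(\mathbf x,\mathbf y)|\leq C\Big(1+\tfrac{\|\mathbf x-\mathbf y\|}{t}\Big)^{-1}\frac{1}{w(B(\mathbf x,t+d(\mathbf x,\mathbf y)))},
\end{equation*}
which is precisely~\eqref{prop3} with $\delta=1$.

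The principal obstacle is the gradient Gaussian bound on $T_j^{\mathbf x}h_s(\mathbf x,\mathbf y)$: Theorem~\ref{theorem:heat} as stated in the excerpt records only size and $\mathbf y$-Lipschitz estimates, so this ingredient must be supplied either by citing the authors' refined heat-kernel work \cite{DH-heat} or by a short auxiliary lemma in the spirit of~\eqref{eq:Gauss-Lipschitz}. Once the pointwise bound is in hand, Theorem~\ref{teo:Carleson} applied to each $\varphi=\psi^{(j)}$ and summation over $j=0,1,\dots,N$ complete the proof.
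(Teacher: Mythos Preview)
Your overall strategy---reduce to Theorem~\ref{teo:Carleson} applied to each of the $N+1$ kernels $\psi^{(j)}$---is exactly the paper's, and your verification of integrability, mean zero, and~\eqref{prop2} matches. The difference lies in how you obtain the pointwise translation bound~\eqref{prop3}. You go through the Poisson subordination formula and reduce to Gaussian bounds on $h_s(\mathbf x,\mathbf y)$ and on $T_j^{\mathbf x}h_s(\mathbf x,\mathbf y)$; as you correctly flag, the latter gradient bound is not recorded in Theorem~\ref{theorem:heat} and must be imported from elsewhere. The paper bypasses this entirely: for $\psi^{(0)}=q$ it cites a Poisson-kernel estimate already proved in~\cite{DH-Studia}, and for $\psi^{(j)}=T_jp=\partial_jp$ (radiality kills the difference part) it observes that $g=\partial_jp$ satisfies $|\partial^Ig(\mathbf x)|\le C_I(1+\|\mathbf x\|)^{-\mathbf N-2-|I|}$ and then invokes \cite[Theorem~4.1]{DH-JFAA}, a general bound on Dunkl translations of functions with such decay, to get~\eqref{prop3} directly. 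So the paper's route is shorter and self-contained within the cited Dunkl-translation machinery, while your subordination argument is more classical in spirit but costs an extra heat-kernel gradient estimate that you would still need to supply or cite.
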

 \begin{proof}
     Set $q(\mathbf{x})=\partial_t p_t(\mathbf{x})_{\big|t=1}=c_{N,k} (1+ \|\mathbf{x}\|^2)^{-(\mathbf N+3)/2}(\|\mathbf{x}\|^2-\mathbf N)$. 

 It is obvious that 
 $$ 0=\int_{\mathbb{R}^N} q(\mathbf{x})\, dw(\mathbf{x})=\int_{\mathbb{R}^N} T_j p(\mathbf{x})\, dw(\mathbf{x}).$$
 Since $\mathcal Fp(\xi)={ c_k^{-1}}\exp(-\|\xi\|)$, we have 
  $\mathcal Fq(\xi)=-{c_k^{-1}} \|\xi\|\exp(-\|\xi\|)$, $ \mathcal F(T_jp)(\xi)=i{c_k^{-1}}\xi_j \exp(-\|\xi\|)$. 
 So \eqref{prop2} is satisfied for $q(\mathbf{x})$ and $T_jp(\mathbf{x}).$ For estimate \eqref{prop3} for $q(\mathbf{x})$, see~\cite[(3.12), Proposition 3.6]{DH-Studia}. To see \eqref{prop3}
 for $t\tau_{\mathbf x}(T_jp_t)(-\mathbf y)=tT_{j,\mathbf x} p_t(\mathbf x,\mathbf y)$, we set $g(\mathbf{x})=T_jp(\mathbf{x})=\partial_j p(\mathbf{x})$ (because $p(\mathbf x)$ is a radial function) and note that 
 $$ |\partial^I g(\mathbf{x})|\leq C_I (1+\|\mathbf{x}\|)^{-\mathbf N-2-|I|} \quad \text{for all } I\in\mathbb N_0^N. $$
 Now Theorem 4.1 of \cite{DH-JFAA} asserts that 
 $$ |tT_{j,\mathbf{x}}p_t(\mathbf{x},\mathbf{y})|=|g_t(\mathbf{x},\mathbf{y})|\leq C_I \Big(1+\frac{\|\mathbf{x}-\mathbf{y}\|}{t}\Big)^{-1} \Big(1+\frac{d(\mathbf{x},\mathbf{y})}{t}\Big)^{-1}\frac{1}{w(B(\mathbf{x},d(\mathbf{x},\mathbf{y})+t))}.$$
 The proof is complete.  \end{proof}

\begin{remark}\normalfont
    
It was proved in~\cite[Proposition 7.4]{JL} that if $b\in L^1_{\rm loc}(dw)$ is such that
\begin{equation*}
    \int_{\mathbb R^N} |b(\mathbf x)|(1+\|\mathbf x\|)^{-\mathbf N-1}<\infty \text{\ \  and\  \ }\| t\tilde \nabla_{k} \mathcal P_t b(\mathbf x)\|^2 dw(\mathbf x)\frac{dt}{t} \text{ is a Carleson measure},
\end{equation*}
then $b$ belongs to ${\rm BMO}(\mathbf X)$. Corollary~\ref{coro:Carleson_Poisson} asserts that the inverse inclusion holds. 
\end{remark}


\section{Chang--Fefferman decomposition} 

\subsection{Calder\'on reproducing formula}\label{sub:reproducing} 
\index[Other]{$\mathcal{Q}$@$\mathcal{Q}$}
\index[Other]{$\mathcal{Q}_{2^j}$@$\mathcal{Q}_{2^j}$}
For $j \in \mathbb{Z}$, let
\begin{align*}
    \mathcal{Q}_{2^j}=\left\{\prod_{\ell=1}^{N}[2^{j}n_\ell,2^{j}(n_\ell+1))\::\: (n_1,n_2,...,n_N) \in \mathbb{Z}^N\right\}
\end{align*}
denote the decomposition of $\mathbb R^N$ into the dyadic cubes of side-length $\ell (Q)=2^j$. We shall denote by $\mathbf{z}_Q$ the center of the cube $Q$. Then $\mathcal Q=\bigcup_{j\in\mathbb Z}\mathcal Q_{2^j}$ forms the set of all dyadic cubes. 

Assume that  $\psi \in \mathcal S(\mathbb R^N)$ is such that $\int_{\mathbb{R}^N} \psi(\mathbf{x})\, dw(\mathbf{x})=0$.  It follows from the Plancherel's equality~\eqref{eq:Plancherel} that if  $f\in L^2(dw)$, then  the function 
 $F(\mathbf{x},t)=\psi_t*f(\mathbf{x})$ belongs to the tent space  $T_2^2(\mathbf X)=L^2(\mathbb R^N\times (0,\infty), \frac{dw(\mathbf{x})\,dt}{t})$  and 
 \begin{equation}\label{eq:f_to_F}
     \|F\|_{T^2_2(\mathbf X)}\leq C_\psi\| f\|_{L^2(dw)}. 
 \end{equation} 
 Conversely, the mapping 
 \begin{equation}\label{eq:projection} T_2^2 \ni F\mapsto \lim_{\varepsilon \to 0} \int_\varepsilon^{\varepsilon^{-1} } \psi_t * F(\cdot , t)(\mathbf{x})\frac{dt}{t}=: \mathbf P_{\psi}(F)(\mathbf x)
 \end{equation}
 is a bounded operator from $T_2^2(\mathbf X)$ into $L^2(dw)$. The convergence is in $L^2(dw)$.

Set
 \begin{equation}\label{eq:M_1}
     M_1=8 \lceil \mathbf{N} +1\rceil.
 \end{equation}
 {For further purposes, we fix $N_0, N_1>0$ satisfying  
\begin{equation}\label{eq:constantsN}
2\mathbf N<N_0, \quad     N_0+1< N_1 \leq (4M_1+N-1)/2.
\end{equation} 
}
From now on, we fix  real--valued radial functions  ${\boldsymbol \phi}, {\boldsymbol \eta}\in C_c^\infty (B(0,1/4))$ such that 
$${\boldsymbol \phi}=\Delta_k^{M_1}{\boldsymbol \eta}, \quad |\mathcal F{\boldsymbol \eta} (\xi)|\leq C \| \xi\|^2\ \ \text{\rm  for all } \xi \in \mathbb{R}^N,$$ 
and  
$$ \int_0^\infty (\mathcal F{\boldsymbol \phi})(t\xi)^4\,\frac{dt}{t}=1, \text{  for all  } \xi\ne 0.$$ 
Then $L^2(dw)\ni f\mapsto {\boldsymbol \phi}_t*{\boldsymbol \phi}_t*f\in T_2^2(\mathbf X)$ is an isometric injection and the following  Calder\'on reproducing formula holds: 
\begin{equation*}
     f(\mathbf{x})=\lim_{n\to\infty} \int_{2^{-n}}^{2^n} ({\boldsymbol \phi}_t*{\boldsymbol \phi}_t)*({\boldsymbol \phi}_t*{\boldsymbol \phi}_t*f)(\mathbf{x})\,\frac{dt}{t},
\end{equation*}
where the convergence is in $L^2(dw)$. Furthermore, 
 for $f\in L^2(dw)$, we have  
 \begin{equation}\label{eq:decomp_f_Q}
     \begin{split}
         f(\mathbf{x})&=\lim_{n\to\infty} \int_{2^{-n}}^{2^n}{\boldsymbol \phi}_t^{*4} *f(\mathbf{x})\,\frac{dt}{t}\\& =\lim_{n \to \infty}\int_{2^{-n}}^{2^n}\int_{\mathbb{R}^N}({\boldsymbol \phi}_t*{\boldsymbol \phi}_t)(\mathbf{x},\mathbf{y})({\boldsymbol \phi}_t * {\boldsymbol \phi}_t*f)(\mathbf{y})\,dw(\mathbf{y})\,\frac{dt}{t}\\
         &=\sum_{j\in\mathbb Z} \sum_{Q^j\in \mathcal Q_{2^{-j}}} \int_{2^{-j}}^{2^{-j+1}} \int_{Q^j}({\boldsymbol \phi}_t*{\boldsymbol \phi}_t)(\mathbf{x},\mathbf{y})({\boldsymbol \phi}_t*{\boldsymbol \phi}_t*f)(\mathbf{y})\, dw(\mathbf{y})\,\frac{dt}{t}\\
         &=:\sum_{j\in\mathbb Z} \sum_{Q^j\in \mathcal Q_{2^{-j}}} f_{\{Q^j\}}(\mathbf x),
     \end{split}
 \end{equation} 
 where the convergence  is   unconditional in $L^2(dw)$. 

\subsection{Chang--Fefferman decomposition}\label{sec:Chang} 

 For $Q^j\in \mathcal Q_{2^{-j}}$, $j \in \mathbb{Z}$, we set \index[Other]{$\lambda_{Q^j}$@$\lambda_{Q^j}$}
 \begin{equation}\label{eq:lambda}
      \lambda_{Q^j}:=\Big(w(Q^j)^{-1}\int_{2^{-j}}^{2^{-j+1}}\int_{Q^j} |{\boldsymbol \phi}_t^{*2}*f(\mathbf{y})|^2\, dw(\mathbf{y})\,\frac{dt}{t}\Big)^{1/2},
 \end{equation}
 \index[Other]{$a_{Q^j}$@$a_{Q^j}$}
\begin{equation}\label{eq:A_Q}
     a_{Q^j}(\mathbf{x}):=
     \lambda_{Q^j}^{-1} f_{\{Q^j\}} (\mathbf x)=
     \lambda_{Q^j}^{-1}\int_{2^{-j}}^{2^{-j+1}}\int_{Q^j}{\boldsymbol \phi}_t^{*2}(\mathbf{x},\mathbf{y})({\boldsymbol \phi}_t^{*2}*f)(\mathbf{y})\, dw(\mathbf{y})\,\frac{dt}{t}
 \end{equation}
 provided $\lambda_{Q^j}\ne 0$, otherwise we put $a_{Q^j} \equiv 0$. 
 
  Combining ~\eqref{eq:decomp_f_Q}--\eqref{eq:A_Q}, we obtain the generalization to the Dunkl setting of the  Chang--Fefferman decomposition:
 \begin{equation*}
     f(\mathbf{x})=\sum_{j\in\mathbb Z} \sum_{Q^j\in \mathcal Q_{2^{-j}}} \lambda_{Q^j} a_{Q^j}(\mathbf{x}),
 \end{equation*}
 where the convergence is unconditional in $L^2(dw)$.  Moreover, it follows from \eqref{eq:f_to_F} and the boundedness of $\mathbf P_{{\boldsymbol \phi}^{*2}}$ from $T^2_2(\mathbf X)$ to $L^2(dw)$ (see \eqref{eq:projection}) that  there is a constant $C_{\boldsymbol \phi}>0$ such that for any sub-collection  $\mathcal Q'\subseteq \mathcal Q$ and any $f\in L^2(dw)$, we have 
 \begin{equation}
     \label{eq:subcollection}
     \Big\| \sum_{Q\in \mathcal Q'} \lambda_Q a_Q\Big\|_{L^2(dw)} \leq C_{\boldsymbol \phi} \| f\|_{L^2(dw)}. 
 \end{equation}

 Observe that 
 $$ a_{Q^j}=\Delta_k^{2M_1}\tilde a_{Q^j},$$
 where 
 \begin{equation}\label{eq:atom_decomp}
     \widetilde a_{Q^j}(\mathbf x)=\lambda^{-1}_{Q^j}\int_{2^{-j}}^{2^{-j+1}}\int_{Q^j}t^{4M_1} ({\boldsymbol \eta}_t* {\boldsymbol \eta}_t)(\mathbf{x},\mathbf{y}) ({\boldsymbol \phi}_t^{*2}*f)(\mathbf{y})\, dw(\mathbf{y})\frac{dt}{t}.
 \end{equation}

The remaining part of the section is devoted for studying properties of the Chang--Fefferman decomposition in the case where $f$ is a compactly supported $\text{\rm BMO}(\mathbf X)$-function. Clearly, by the John-Niremberg inequality~\eqref{eq:John}, such a function belongs to $L^2(dw)$.

\subsection{Support properties}

{ For a cube $Q$ (not necessarily  dyadic) and $s>0$, let $sQ$ \index[Other]{$s{Q}$@$s{Q}$}denote the cube with the same center $\mathbf z_Q$  as $Q$ and sides parallel to the axes of the length  $s\ell  (Q)$. We set }\index[Other]{$Q^{\diamond}$@$Q^{\diamond}$}
\begin{align}\label{def:diamond}
    Q^{\diamond}:=\mathcal O(B(\mathbf z_Q, 2\sqrt{N}\ell (Q))).
\end{align} 

\begin{proposition}\label{prop:changg}
    Consider the Chang--Fefferman decomposition of a compactly supported ${\rm BMO}(\mathbf X)$-function $f$ (see Subsection~\ref{sec:Chang}). Then for all $Q\in\mathcal Q$, $a_Q$, and $\widetilde{a}_Q$, we have 

    \begin{equation*}
        \text{\rm supp}\, a_Q\subseteq Q^{\diamond}, \quad  \text{\rm supp}\, \widetilde a_Q\subseteq Q^{\diamond}. 
    \end{equation*} 
    Moreover, {there is a constant {$C_{10}\geq 1$} such that for all  compactly supported $f \in {\rm BMO}(\mathbf{X})$ and $Q \in \mathcal{Q}$, we have}
\begin{equation}\label{eq:sum_lambda2}
     \sum_{\substack{  P^{\diamond} \cap Q^{\diamond}\ne \emptyset, \\  \ell (P)\leq \ell (Q)} }
 \lambda_P^2w(P)\leq C_{10}w(Q)\| f\|^2_{{\rm BMO}(\mathbf X)}.
\end{equation}
     In particular, 
      \begin{equation}
          \label{eq:lambda_bound}
        0\leq   {\lambda_Q}\leq C_{10}\| f\|_{{\rm BMO}(\mathbf X)}. 
      \end{equation}
\end{proposition}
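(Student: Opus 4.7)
The plan is to handle the three claims in order: first the support properties (which rely essentially on radial symmetry of ${\boldsymbol \phi},{\boldsymbol \eta}$ and \eqref{eq:supp_transl}); then the Carleson-type sum \eqref{eq:sum_lambda2}, using Corollary~\ref{coro:Carleson} applied to ${\boldsymbol \phi}^{*2}$; then \eqref{eq:lambda_bound} by specializing the sum to $P=Q$.

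\textbf{Support.} I would use that ${\boldsymbol \phi}$ is radial and supported in $B(0,1/4)$. Since the Dunkl transform preserves radial symmetry and $\mathcal F({\boldsymbol \phi}_t^{*2})=c_k(\mathcal F{\boldsymbol \phi}_t)^2$, the function ${\boldsymbol \phi}_t^{*2}$ is radial. Writing ${\boldsymbol \phi}_t^{*2}(\mathbf x)=\int {\boldsymbol \phi}_t(\mathbf y)\tau_{\mathbf x}{\boldsymbol \phi}_t(-\mathbf y)\,dw(\mathbf y)$ and applying \eqref{eq:supp_transl} to the radial ${\boldsymbol \phi}_t$, the integrand vanishes unless $\|\mathbf y\|\leq t/4$ and $d(\mathbf x,\mathbf y)\leq t/4$, forcing $\|\mathbf x\|=d(\mathbf x,0)\leq t/2$; hence $\supp({\boldsymbol \phi}_t^{*2})\subseteq B(0,t/2)$, and the same holds for ${\boldsymbol \eta}_t^{*2}$. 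Applying \eqref{eq:supp_transl} once more to the radial function ${\boldsymbol \phi}_t^{*2}$ yields $({\boldsymbol \phi}_t^{*2})(\mathbf x,\mathbf y)=0$ whenever $d(\mathbf x,\mathbf y)>t/2$. For $Q=Q^j$ with $\ell(Q)=2^{-j}$ and $t\in[2^{-j},2^{-j+1}]$, the nonvanishing of the integrand in \eqref{eq:A_Q} requires $d(\mathbf x,\mathbf y)\leq t/2\leq \ell(Q)$ for some $\mathbf y\in Q$, whence $\|\sigma(\mathbf x)-\mathbf z_Q\|\leq \ell(Q)+\tfrac{\sqrt N}{2}\ell(Q)\leq 2\sqrt N\,\ell(Q)$ for some $\sigma\in G$, i.e.\ $\mathbf x\in Q^{\diamond}$. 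The argument for $\widetilde a_Q$ is identical, using ${\boldsymbol \eta}_t^{*2}$ in place of ${\boldsymbol \phi}_t^{*2}$.

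\textbf{The Carleson sum.} I would rewrite \eqref{eq:lambda} as $\lambda_P^2 w(P)=\int_{\ell(P)}^{2\ell(P)}\int_P|{\boldsymbol \phi}_t^{*2}*f(\mathbf y)|^2\,dw(\mathbf y)\frac{dt}{t}$. Dyadic cubes of fixed scale are disjoint, and the Whitney intervals $[\ell(P),2\ell(P)]$ are disjoint across scales, so the sum in \eqref{eq:sum_lambda2} equals an integral over a region in $\mathbb R^N\times(0,2\ell(Q)]$. A short orbit computation shows that for every $P$ appearing (with $P^{\diamond}\cap Q^{\diamond}\neq\emptyset$ and $\ell(P)\leq\ell(Q)$) one has $P\subseteq \mathcal O(B(\mathbf z_Q,C_1\ell(Q)))$ for some $C_1$ depending only on $N$: indeed, choosing $\sigma_P,\sigma_Q\in G$ with $\|\sigma_P(\mathbf z_P)-\sigma_Q(\mathbf z_Q)\|\leq 2\sqrt N(\ell(P)+\ell(Q))$ and using that the Weyl group preserves norms, one gets $d(\mathbf y,\mathbf z_Q)\leq C_1\ell(Q)$. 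Hence the sum is bounded by $\sum_{\sigma\in G}\int_0^{2\ell(Q)}\int_{B(\sigma(\mathbf z_Q),C_1\ell(Q))}|{\boldsymbol \phi}_t^{*2}*f|^2\,dw\,\frac{dt}{t}$. Now I apply Corollary~\ref{coro:Carleson} to the function $\Phi:={\boldsymbol \phi}^{*2}$, which is radial, $C^\infty$, supported in $B(0,1/2)\subseteq B(0,1)$, and satisfies $\int \Phi\,dw = c_k\mathcal F\Phi(0)=c_k(\mathcal F{\boldsymbol \phi})(0)^2=0$ (since $\mathcal F{\boldsymbol \phi}=(-\|\xi\|^2)^{M_1}\mathcal F{\boldsymbol \eta}$ vanishes at $0$). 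Using doubling \eqref{eq:doubling} to replace $w(B(\sigma(\mathbf z_Q),C_1\ell(Q)))$ by $w(Q)$ completes \eqref{eq:sum_lambda2}. Finally, \eqref{eq:lambda_bound} is obtained by keeping only the term $P=Q$ on the left-hand side of \eqref{eq:sum_lambda2}.

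The main technical care is in the support argument for ${\boldsymbol \phi}_t^{*2}$, since Dunkl convolution does not generally preserve compact support; this is the one place where radiality is essential. The remainder of the proof is a clean Chang--Fefferman-style tent computation transferred to the homogeneous space $\mathbf X$, where the only Dunkl-specific ingredient is summing the $|G|$ orbit images of the containing ball before invoking the Carleson estimate.
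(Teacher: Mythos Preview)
Your proof is correct and follows essentially the same approach as the paper: the support claim via \eqref{eq:supp_transl}, the Carleson sum by reducing to an integral over $\mathcal O(B(\mathbf z_Q,C\ell(Q)))\times(0,2\ell(Q)]$ and invoking Corollary~\ref{coro:Carleson} for $\varphi={\boldsymbol\phi}^{*2}$, and \eqref{eq:lambda_bound} by specializing to $P=Q$. Your support argument is in fact more explicit than the paper's (which simply cites \eqref{eq:supp_transl}), since you spell out why radiality of ${\boldsymbol\phi}$ forces ${\boldsymbol\phi}_t^{*2}$ to be compactly supported---a genuine subtlety in the Dunkl setting.
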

\begin{proof}
    The first assertion follows from~\eqref{eq:A_Q} and the fact that ${\boldsymbol \phi}_t^{*2}(\mathbf x,\mathbf y)=0$ for $\mathbf x\notin Q^{\diamond}$, ${\mathbf{y}}\in Q$ and $t\leq 2\ell (Q)$ (see~\eqref{eq:supp_transl}). 

 In order to prove~\eqref{eq:sum_lambda2}, we fix $Q\in \mathcal Q$ and  consider all the cubes $P\in\mathcal Q$ such that $\ell (P)\leq \ell (Q)$, $P^{\diamond}\cap Q^{\diamond}\ne \emptyset$. Then,  using \eqref{eq:lambda}, we get
 \begin{equation*}
     \begin{split}
          \sum_{ \substack{   P^{\diamond} \cap Q^{\diamond}\ne \emptyset,\\  \ell (P)\leq \ell (Q)}}
 \lambda_P^2w(P)&
 \leq \sum_{2^{-j}\leq \ell (Q)} \ \sum_{ \substack{P \in\mathcal Q_{2^{-j}},\\ P^{\diamond}\cap Q^{\diamond}\ne \emptyset}} \int_{2^{-j}}^{2^{-j+1} }\int_{P} |{\boldsymbol \phi}_t^{*2}*f(\mathbf{y})|^2\, dw(\mathbf{y})\,\frac{dt}{t}\\
 &\leq C' \int_0^{2\ell (Q)}\int_{(6Q)^{\diamond}}|{\boldsymbol \phi}_t^{*2}*f(\mathbf{y})|^2\, dw(\mathbf{y})\,\frac{dt}{t}\\
 &\leq C''|G|w(Q)\| f\|_{{\rm BMO}(\mathbf X)}^2,
     \end{split}
 \end{equation*} 
 where in the last inequality we have used Corollary \ref{coro:Carleson} and the doubling property~\eqref{eq:doubling}. 
\end{proof}

\subsection{Size and regularity properties}
 {In this section we derive estimates for the Dunkl derivatives of the functions $a_Q$ and $\tilde a_Q$ from the Chang--Fefferman decomposition of any $L^2$ functions $f$ (see Subsection \ref{sec:Chang}). We also prove  Lipschitz estimates and the cancellation property. We want to emphasize   that the constants $C, C_I, C_I'$ in Propositions \ref{prop:size_a}  and \ref{propo:regularity_a} do not depend on $f$. }

\begin{proposition}\label{prop:size_a}
Let $I \in \mathbb{N}_0^N$. There are  constants $C_I,C_I'>0$ such that for all
 {$a_Q$ and $\widetilde a_Q$ - functions from the Chang--Fefferman decomposition of an $L^2(dw)$-function $f$  (see Subsection \ref{sec:Chang}), and  for all $\mathbf{x} \in \mathbb{R}^N$} we have
 \begin{equation}\label{eq:a_Qbound_1}  
     |T^I \tilde a_{Q}(\mathbf x)| \leq C_I \ell(Q)^{4M_1-|I|}\Big(1+\frac{\| \mathbf x-\mathbf z_{Q}\|}{\ell(Q)}\Big)^{-1}\chi_{[0,2\sqrt{N}\ell (Q)]}(d(\mathbf x,\mathbf z_Q)),
 \end{equation}
 \begin{equation}\label{eq:a_Qbound}        
     |T^Ia_{Q}(\mathbf x)| \leq C_I'\ell (Q)^{-|I|} \Big(1+\frac{\| \mathbf x-\mathbf z_{Q}\|}{\ell(Q)}\Big)^{-1}\chi_{[0,2\sqrt{N}\ell (Q)]}(d(\mathbf x,\mathbf z_Q)).
     \end{equation}
\end{proposition}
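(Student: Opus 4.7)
The plan is to differentiate under the integral sign in the definitions \eqref{eq:atom_decomp} and \eqref{eq:A_Q} and then to bound the kernels $T^I_{\mathbf x}(\boldsymbol{\eta}_t*\boldsymbol{\eta}_t)(\mathbf x,\mathbf y)$ and $T^I_{\mathbf x}(\boldsymbol{\phi}_t^{*2})(\mathbf x,\mathbf y)$ pointwise by means of Theorem~\ref{teo:translations_kernels}\eqref{translations_kernels:a}. To invoke that theorem we need to write these convolutions in the form $\varphi_t$ with $\varphi \in C_c^\infty(B(0,1))$. Since $\boldsymbol{\eta},\boldsymbol{\phi}$ are supported in $B(0,1/4)$ and radial, the support property \eqref{eq:supp_transl} of the Dunkl translation shows that $\Psi:=\boldsymbol{\eta}*\boldsymbol{\eta}$ and $\Phi:=\boldsymbol{\phi}*\boldsymbol{\phi}$ are smooth functions supported in $B(0,1/2)\subset B(0,1)$; moreover a direct Fourier computation yields $\boldsymbol{\eta}_t*\boldsymbol{\eta}_t=\Psi_t$ and $\boldsymbol{\phi}_t^{*2}=\Phi_t$.

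Applying Theorem~\ref{teo:translations_kernels}\eqref{translations_kernels:a} (with $J=0$ and $\kappa$ large enough in terms of $|I|$ and $\mathbf N$) thus gives
\begin{equation*}
|T^I_{\mathbf x}(\boldsymbol{\eta}_t*\boldsymbol{\eta}_t)(\mathbf x,\mathbf y)|+|T^I_{\mathbf x}\boldsymbol{\phi}_t^{*2}(\mathbf x,\mathbf y)|
\leq C_I\, t^{-|I|}\Big(1+\tfrac{\|\mathbf x-\mathbf y\|}{t}\Big)^{-1}w(B(\mathbf x,t))^{-1}\chi_{[0,t]}(d(\mathbf x,\mathbf y))
\end{equation*}
uniformly for $t\in[\ell(Q),2\ell(Q)]$. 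For $\mathbf y\in Q$ one has $\|\mathbf y-\mathbf z_Q\|\leq \tfrac{\sqrt N}{2}\ell(Q)$, so by the triangle inequality $d(\mathbf x,\mathbf y)\leq t$ implies $d(\mathbf x,\mathbf z_Q)\leq 2\sqrt N\ell(Q)$, and $\|\mathbf x-\mathbf y\|\geq \tfrac12\|\mathbf x-\mathbf z_Q\|$ once $\|\mathbf x-\mathbf z_Q\|\geq \sqrt N\ell(Q)$; hence
\begin{equation*}
\Big(1+\tfrac{\|\mathbf x-\mathbf y\|}{t}\Big)^{-1}\chi_{[0,t]}(d(\mathbf x,\mathbf y))\leq C\Big(1+\tfrac{\|\mathbf x-\mathbf z_Q\|}{\ell(Q)}\Big)^{-1}\chi_{[0,2\sqrt N\ell(Q)]}(d(\mathbf x,\mathbf z_Q)).
\end{equation*}
Moreover, the $G$-invariance of $dw$ combined with doubling \eqref{eq:doubling} gives $w(B(\mathbf x,t))^{-1}\leq C w(Q)^{-1}$ on the support above.

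Substituting these bounds into \eqref{eq:atom_decomp} and \eqref{eq:A_Q}, pulling the $\mathbf y$- and $t$-independent factors outside, and pulling the constant $\ell(Q)^{4M_1}$ out of $\tilde a_Q$ leaves an inner integral of $|\boldsymbol{\phi}_t^{*2}*f(\mathbf y)|$ against $dw(\mathbf y)\,dt/t$ over $Q\times[\ell(Q),2\ell(Q)]$. By Cauchy--Schwarz and the very definition \eqref{eq:lambda} of $\lambda_Q$,
\begin{equation*}
\int_{\ell(Q)}^{2\ell(Q)}\!\!\int_Q |\boldsymbol{\phi}_t^{*2}*f(\mathbf y)|\,dw(\mathbf y)\,\tfrac{dt}{t}
\leq (\log 2)^{1/2}w(Q)^{1/2}\bigl(\lambda_Q^2 w(Q)\bigr)^{1/2}=C\,\lambda_Q w(Q),
\end{equation*}
which precisely cancels the $\lambda_Q^{-1}w(Q)^{-1}$ in front and produces \eqref{eq:a_Qbound_1} and \eqref{eq:a_Qbound}.

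The one step requiring a little care (and the main technical point) is the geometric reduction from $\mathbf y\in Q$ to $\mathbf z_Q$: the cutoff $\chi_{[0,t]}(d(\mathbf x,\mathbf y))$ and the size $w(B(\mathbf x,t))^{-1}$ must be transferred to quantities depending only on $\mathbf z_Q$ and $\ell(Q)$, which is handled by the orbit-distance triangle inequality and the $G$-invariance and doubling of $w$; the rest of the argument is essentially a calibrated Cauchy--Schwarz against the definition of $\lambda_Q$, and the identical strategy handles both $a_Q$ (no extra $t$-factor) and $\widetilde a_Q$ (factor $t^{4M_1}\sim\ell(Q)^{4M_1}$).
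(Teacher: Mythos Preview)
Your proof is correct and follows essentially the same approach as the paper: apply Theorem~\ref{teo:translations_kernels}\eqref{translations_kernels:a} to the compactly supported functions $\boldsymbol{\eta}*\boldsymbol{\eta}$ and $\boldsymbol{\phi}*\boldsymbol{\phi}$, replace $\mathbf y\in Q$ by $\mathbf z_Q$ via the triangle inequality and doubling, and finish with Cauchy--Schwarz against the definition~\eqref{eq:lambda} of $\lambda_Q$. The paper's proof is slightly terser (it invokes the kernel bound directly without spelling out that $\boldsymbol{\eta}_t*\boldsymbol{\eta}_t=(\boldsymbol{\eta}*\boldsymbol{\eta})_t$ has the required support, and it writes $w(B(\mathbf y,t))^{-1}$ rather than $w(B(\mathbf x,t))^{-1}$), but the argument is identical.
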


\begin{proof}
     Using~\eqref{eq:atom_decomp} and part~\eqref{translations_kernels:a} of Theorem~\ref{teo:translations_kernels} and~\eqref{eq:lambda}, we get 
 \begin{equation*}
     \begin{split}
         |T^I\tilde a_{Q}(\mathbf{x})|&\leq C_I\lambda_{Q}^{-1}\int_{\ell(Q)}^{2\ell(Q)}\int_{Q}t^{4M_1 -|I|}w(B(\mathbf{y},t))^{-1} \Big(1+\frac{\|\mathbf{x}-\mathbf{y}\|}{t}\Big)^{-1} |{\boldsymbol \phi}_t^{*2}*f(\mathbf{y})|\, dw(\mathbf{y})\frac{dt}{t}\\
         &\ \ \ \ \times \chi_{[0,2\sqrt{N} \ell (Q)]}(d(\mathbf x, \mathbf z_Q))\\
         & \leq C_I\lambda_{Q}^{-1}\ell(Q)^{4 M_1-|I|}\Big(1+\frac{\| \mathbf{x}-\mathbf{z}_{Q}\|}{\ell(Q)}\Big)^{-1} \chi_{[0,2\sqrt{N}\ell (Q)]}(d(\mathbf x,\mathbf z_Q))\\
         &\quad \times \Big( w(B(Q))^{-1}\int_{\ell(Q)}^{2\ell(Q)}\int_{Q}|{\boldsymbol \phi}_t^{*2}*f(\mathbf{y})|^2\, dw(\mathbf{y})\frac{dt}{t}\Big)^{1/2}\\
         &= C_I \ell(Q)^{4M_1-|I|}\Big(1+\frac{\| \mathbf{x}-\mathbf{z}_{Q}\|}{\ell(Q)}\Big)^{-1}\chi_{[0,2\sqrt{N} \ell(Q)]}(d(\mathbf{x},\mathbf{z}_{Q})),
     \end{split}
 \end{equation*}
{where in the last inequality we have used the Cauchy--Schwarz inequality.} This proves~\eqref{eq:a_Qbound_1}. The proof of~\eqref{eq:a_Qbound} follows the same pattern. 
\end{proof}

\begin{proposition}\label{propo:regularity_a}
    There is a constant $C>0$ such that for all $a_Q$ - functions from the Chang--Fefferman decomposition of an $L^2(dw)$-function $f$ (see Subsection \ref{sec:Chang}) and  for all $\mathbf x,\mathbf{x}' \in \mathbb{R}^N$ are such that $d(\mathbf x,\mathbf x')\leq 2\sqrt{N}\ell (Q)$  we have
   \begin{equation}\label{eq:a_Qminus}
      |a_{Q}(\mathbf x)-a_Q(\mathbf x')| \leq C \frac{\|\mathbf x-\mathbf x'\|}{\ell(Q)}\Big(1+\frac{\| \mathbf x-\mathbf z_{Q}\|}{\ell(Q)}\Big)^{-1}\chi_{[0,8\sqrt{N}\ell (Q)]}(d(\mathbf x,\mathbf z_Q)).
      \end{equation}
\end{proposition}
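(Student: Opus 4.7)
My plan mimics the proof of the size bound \eqref{eq:a_Qbound}, but using the Lipschitz estimate of the kernel $\boldsymbol{\phi}_t^{*2}$ in place of its pointwise bound. Starting from
\[
a_Q(\mathbf{x})-a_Q(\mathbf{x}')=\lambda_Q^{-1}\int_{\ell(Q)}^{2\ell(Q)}\!\int_Q\bigl[\boldsymbol{\phi}_t^{*2}(\mathbf{x},\mathbf{y})-\boldsymbol{\phi}_t^{*2}(\mathbf{x}',\mathbf{y})\bigr](\boldsymbol{\phi}_t^{*2}*f)(\mathbf{y})\,dw(\mathbf{y})\,\frac{dt}{t},
\]
I would bound the bracketed kernel difference and close by Cauchy--Schwarz in $(\mathbf{y},t)$ together with the definition \eqref{eq:lambda} of $\lambda_Q$, which cancels the factor $\lambda_Q^{-1}$. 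As a preliminary reduction: if $d(\mathbf{x},\mathbf{z}_Q)>8\sqrt{N}\,\ell(Q)$, then the triangle inequality for the orbit distance together with $d(\mathbf{x},\mathbf{x}')\le 2\sqrt{N}\,\ell(Q)$ forces $d(\mathbf{x}',\mathbf{z}_Q)>6\sqrt{N}\,\ell(Q)$, so both $a_Q(\mathbf{x})$ and $a_Q(\mathbf{x}')$ vanish by the support part of \eqref{eq:a_Qbound}. I may therefore assume $d(\mathbf{x},\mathbf{z}_Q)\le 8\sqrt{N}\,\ell(Q)$; under this assumption, $G$-invariance and doubling of $w$ yield $w(B(\mathbf{x},t))\sim w(B(\mathbf{x}',t))\sim w(B(\mathbf{y},t))\sim w(Q)$ uniformly for $\mathbf{y}\in Q$ and $t\in[\ell(Q),2\ell(Q)]$.

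I would then split on the ratio $\|\mathbf{x}-\mathbf{x}'\|/\ell(Q)$. In the regime $\|\mathbf{x}-\mathbf{x}'\|\ge\ell(Q)$, I apply \eqref{eq:a_Qbound} to $a_Q(\mathbf{x})$ and $a_Q(\mathbf{x}')$ separately: the first term is absorbed since $\|\mathbf{x}-\mathbf{x}'\|/\ell(Q)\ge 1$, while for the second the Euclidean triangle inequality gives
\[
1+\tfrac{\|\mathbf{x}-\mathbf{z}_Q\|}{\ell(Q)}\le 2\Bigl(1+\tfrac{\|\mathbf{x}'-\mathbf{z}_Q\|}{\ell(Q)}\Bigr)\tfrac{\|\mathbf{x}-\mathbf{x}'\|}{\ell(Q)},
\]
converting the $\mathbf{x}'$-centred weight into the $\mathbf{x}$-centred one.

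In the remaining regime $\|\mathbf{x}-\mathbf{x}'\|<\ell(Q)$, I would apply part~\eqref{translations_kernels:c} of Theorem~\ref{teo:translations_kernels} with $I=0$ to $\varphi=\boldsymbol{\phi}^{*2}\in C_c^\infty(B(0,1/2))$. This is legal because $\boldsymbol{\phi}_t^{*2}=(\boldsymbol{\phi}^{*2})_t$, as one sees on the Fourier--Dunkl side from $\mathcal{F}(\boldsymbol{\phi}_t*\boldsymbol{\phi}_t)(\xi)=c_k\,\mathcal{F}\boldsymbol{\phi}(t\xi)^2=\mathcal{F}((\boldsymbol{\phi}^{*2})_t)(\xi)$. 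Combined with the weight simplifications above and the elementary lower bound $t+\|\mathbf{x}-\mathbf{y}\|\gtrsim \ell(Q)+\|\mathbf{x}-\mathbf{z}_Q\|$ (obtained by case-splitting on whether $\|\mathbf{x}-\mathbf{z}_Q\|\le\sqrt{N}\,\ell(Q)$, using $\|\mathbf{y}-\mathbf{z}_Q\|\le\sqrt{N}\,\ell(Q)/2$), this produces
\[
|\boldsymbol{\phi}_t^{*2}(\mathbf{x},\mathbf{y})-\boldsymbol{\phi}_t^{*2}(\mathbf{x}',\mathbf{y})|\le C\,\frac{\|\mathbf{x}-\mathbf{x}'\|}{\ell(Q)+\|\mathbf{x}-\mathbf{z}_Q\|}\,w(Q)^{-1}.
\]
Pulling the $\mathbf{x}$-dependent factor out and applying Cauchy--Schwarz in $(\mathbf{y},t)$ collapses the remaining integral to $\lambda_Q\,w(Q)^{1/2}\cdot(w(Q)\log 2)^{1/2}$, cancelling $\lambda_Q^{-1}$ and producing exactly $C\,\|\mathbf{x}-\mathbf{x}'\|/(\ell(Q)+\|\mathbf{x}-\mathbf{z}_Q\|)$.

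The main obstacle is the interplay between the Euclidean distance (in the conclusion) and the orbit distance (in the hypothesis and in the support factor $\chi_{[0,8\sqrt{N}\ell(Q)]}$): since $\mathbf{x}$ may have small $d(\mathbf{x},\mathbf{z}_Q)$ but large $\|\mathbf{x}-\mathbf{z}_Q\|$ when it lies near an orbit image of $\mathbf{z}_Q$, obtaining the clean lower bound $t+\|\mathbf{x}-\mathbf{y}\|\gtrsim \ell(Q)+\|\mathbf{x}-\mathbf{z}_Q\|$ uniformly in $\mathbf{y}\in Q$ is where this friction is resolved; the rest is the standard Cauchy--Schwarz argument used for \eqref{eq:a_Qbound}.
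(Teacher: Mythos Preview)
Your proposal is correct and follows essentially the same route as the paper: reduce to $d(\mathbf{x},\mathbf{z}_Q)\le 8\sqrt{N}\,\ell(Q)$ via the support of $a_Q$, apply the Lipschitz estimate of part~\eqref{translations_kernels:c} of Theorem~\ref{teo:translations_kernels} to $\boldsymbol{\phi}^{*2}$, simplify the weights to $w(Q)^{-1}$, and close with Cauchy--Schwarz against the definition of $\lambda_Q$. Your case split on $\|\mathbf{x}-\mathbf{x}'\|/\ell(Q)$ is harmless but unnecessary, since part~\eqref{translations_kernels:c} carries no smallness hypothesis on $\|\mathbf{x}-\mathbf{x}'\|$; the paper simply applies it directly.
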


\begin{proof}
     Let us note that if $d(\mathbf x,\mathbf x')\leq 2\sqrt{N}\ell (Q)$, $\mathbf y\in Q$, $\ell (Q)\leq t \leq 2\ell (Q)$, then ${\boldsymbol \phi}^{*2}_t(\mathbf x,\mathbf y)=0={\boldsymbol \phi}^{*2}_t(\mathbf x',\mathbf y)$ for $d(\mathbf x,\mathbf z_Q)>8\sqrt{N}\ell(Q)$ (see~\eqref{eq:supp_transl}). Hence, applying part~\eqref{translations_kernels:c} of Theorem \ref{teo:translations_kernels}, we conclude that 
 \begin{equation*}\label{eq:a_Q_lip}
    \begin{split}
        |a_{Q}&(\mathbf{x})-a_{Q}(\mathbf{x}')| \leq \lambda_{Q}^{-1} \int_{\ell (Q)}^{2\ell Q)}\int_{Q}|{\boldsymbol \phi}_t^{*2}(\mathbf{x},\mathbf{y})-{\boldsymbol \phi}_t^{*2}(\mathbf{x}',\mathbf{y})||{\boldsymbol \phi}_t^{*2}*f(\mathbf{y})|\, dw(\mathbf{y})\frac{dt}{t}\\
        &\leq C\frac{\lambda_{Q}^{-1}}{w(Q)}
        \int_{\ell (Q)}^{2\ell (Q)}\int_{Q}\frac{\|\mathbf{x}-\mathbf{x}'\|}{\ell (Q)}\Big(1+\frac{\|\mathbf{x}-\mathbf{y}\|}{\ell (Q)}\Big)^{-1} |{\boldsymbol \phi}_t^{*2}*f(\mathbf{y})|\, dw(\mathbf{y})\frac{dt}{t}.\\
    \end{split}
\end{equation*}
Since $1+\|\mathbf x-\mathbf y\|/\ell (Q)\sim 1+\|\mathbf x-\mathbf z_Q\|/\ell (Q)$ for $\mathbf y\in Q$, we obtain~\eqref{eq:a_Qminus} by applying the Cauchy-Schwarz inequality and~\eqref{eq:lambda}. 
\end{proof}

\begin{proposition}\label{propo:integral_0_a}
 Let
 {$a_Q$ and $\widetilde a_Q$ be the functions from the Chang--Fefferman decomposition of an $L^2(dw)$-function $f$}. Then for all  $I \in \mathbb{N}_0^N$, we have 
\begin{equation}\label{eq:integ_zero} \int_{\mathbb{R}^N} T^I\widetilde a_Q(\mathbf y)\, dw(\mathbf y)=\int_{\mathbb{R}^N} T^I a_Q(\mathbf y)\, dw(\mathbf y)=0.
\end{equation} 
\end{proposition}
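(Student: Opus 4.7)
The plan is to reduce everything to values of Dunkl transforms at the origin. First, by the smoothness of $\boldsymbol{\phi},\boldsymbol{\eta}$ combined with Proposition~\ref{prop:changg}, both $a_Q$ and $\widetilde a_Q$ are $C^\infty$ functions supported in the compact set $Q^{\diamond}$. Hence for every multi-index $I$ the functions $T^I a_Q$ and $T^I \widetilde a_Q$ lie in $L^1(dw)$, and the elementary identity $\int g\, dw = c_k\, \mathcal F g(0)$ (immediate from $E(0,\mathbf{x})=1$) reduces the claim to showing that $\mathcal F(T^I a_Q)(0) = \mathcal F(T^I \widetilde a_Q)(0) = 0$.

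Next I would invoke the Dunkl multiplier identity $\mathcal F(T^I g)(\xi) = (i\xi)^I \mathcal F g(\xi)$, which follows from $T^{(\mathbf x)}_j E(-i\xi,\mathbf x) = -i\xi_j E(-i\xi,\mathbf x)$ together with skew-symmetry of $T_j$. For $|I| > 0$ the monomial $(i\xi)^I$ already vanishes at $\xi=0$ and the proof is finished, so only the case $|I|=0$ remains, i.e., one has to verify $\mathcal F a_Q(0) = 0 = \mathcal F \widetilde a_Q(0)$.

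For the remaining case I would rewrite both functions as $t$-integrals of Dunkl convolutions,
\[
a_Q = \lambda_Q^{-1} \int_{\ell(Q)}^{2\ell(Q)} F_t * \boldsymbol{\phi}_t^{*2}\, \frac{dt}{t}, \qquad
\widetilde a_Q = \lambda_Q^{-1} \int_{\ell(Q)}^{2\ell(Q)} t^{4M_1}\, F_t * (\boldsymbol{\eta}_t * \boldsymbol{\eta}_t)\, \frac{dt}{t},
\]
where $F_t := \chi_Q \cdot (\boldsymbol{\phi}_t^{*2} * f) \in L^2(dw)$ is compactly supported (this reformulation uses the symmetry $\boldsymbol{\phi}_t^{*2}(\mathbf x,\mathbf y)=\boldsymbol{\phi}_t^{*2}(\mathbf y,\mathbf x)$ coming from radiality of $\boldsymbol{\phi}$). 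Applying $\mathcal F$, pulling it inside the $t$-integral by Fubini, and using the convolution rule $\mathcal F(g*h) = c_k\, \mathcal F g\cdot \mathcal F h$, the desired vanishing at $\xi = 0$ follows from $\mathcal F \boldsymbol{\phi}(0) = 0$ (immediate from $\boldsymbol{\phi} = \Delta_k^{M_1}\boldsymbol{\eta}$) and from $\mathcal F \boldsymbol{\eta}(0) = 0$ (a direct consequence of the stipulated bound $|\mathcal F \boldsymbol{\eta}(\xi)| \leq C\|\xi\|^2$).

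I do not anticipate a serious obstacle: the only mildly delicate point is justifying the interchange of $\mathcal F$ with the $t$-integral, but this is immediate since the integrand is bounded uniformly on the compact interval $[\ell(Q),2\ell(Q)]$ and has compact spatial support inside $Q^{\diamond}$.
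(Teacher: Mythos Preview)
Your argument is correct and reaches the same conclusion, but it takes a more Fourier-analytic route than the paper. The paper's proof is purely spatial: it first verifies absolute integrability of the triple integral defining $T^I\widetilde a_Q$ (resp.\ $T^I a_Q$), then applies Fubini to integrate in $\mathbf x$ first and invokes directly the kernel cancellation $\int_{\mathbb R^N} T^I_{\mathbf x}(\boldsymbol\eta_t*\boldsymbol\eta_t)(\mathbf x,\mathbf y)\,dw(\mathbf x)=0$ for every fixed $\mathbf y$, which encodes precisely the skew-symmetry of $T_j$ together with the mean-zero property of $\boldsymbol\eta$ and the integral-preserving property \eqref{eq:int-preserv} of Dunkl translations. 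Your approach instead passes to the Dunkl side, where the multiplier identity $\mathcal F(T^Ig)(\xi)=(i\xi)^I\mathcal F g(\xi)$ disposes of the case $|I|>0$ in one line, and the convolution factorisation $a_Q=\lambda_Q^{-1}\int F_t*\boldsymbol\phi_t^{*2}\,dt/t$ handles $|I|=0$. Both arguments ultimately rest on the same two ingredients (skew-symmetry of $T_j$ and $\mathcal F\boldsymbol\eta(0)=\mathcal F\boldsymbol\phi(0)=0$); the paper's version is uniform in $I$ and avoids the small detour through the symmetry $\boldsymbol\phi_t^{*2}(\mathbf x,\mathbf y)=\boldsymbol\phi_t^{*2}(\mathbf y,\mathbf x)$, while yours makes the spectral mechanism more transparent and separates the $|I|>0$ case as a triviality.
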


\begin{proof}
    We note that for each multi-index $I \in \mathbb{N}_0^N$,
 \begin{equation*}\begin{split}
 \lambda^{-1}_{Q}&\int_{\mathbb R^N}\int_{\ell (Q)}^{2\ell (Q)} \int_{Q} t^{4M_1}\Big|T_{\mathbf{x}}^I({\boldsymbol \eta}_t* {\boldsymbol \eta}_t)(\mathbf{x},\mathbf{y}) ({\boldsymbol \phi}_t^{*2}*f)(\mathbf{y})\Big|\, dw(\mathbf{y})\,\frac{dt}{t}\,dw(\mathbf{x})\\
 &\leq C_I \frac{\ell (Q)^{4M_1-|I|}}{ \lambda_{Q}} 
 \int_{\ell (Q)}^{2 \ell (Q)} \int_{Q} \Big| ({\boldsymbol \phi}_t^{*2}*f)(\mathbf{y})\Big|\, dw(\mathbf{y})\frac{dt}{t}\leq C'_Iw(Q) \ell (Q)^{4M_1-|I|}<\infty.
 \end{split}
 \end{equation*}
 Hence   from the Fubini theorem and   the fact that $\int_{\mathbb{R}^N} T^I_{\mathbf x}{\boldsymbol \eta}_t(\mathbf x,\mathbf y)\, dw({\mathbf{x}})=0$ for all $I \in \mathbb{N}_0^{N}$ and $\mathbf y\in \mathbb R^N$, we conclude
 $$\int_{\mathbb{R}^N} T^I \widetilde a_Q(\mathbf x)\, dw(\mathbf x)=0.$$ 
 The proof of the second equality in~\eqref{eq:integ_zero} can be handled in much  the same way. 
\end{proof}

\subsection{Actions of singular integrals on \texorpdfstring{$a_Q$}{aQ}  }

\begin{proposition}\label{prop:action} 
     Let $\kappa$ be an even positive integer, $\kappa >\mathbf N+4M_1+1$. There exists $C>0$ such that for all  $0<M\leq 4M_1+N-1$, all the functions $a_Q$ from the Chang--Fefferman decomposition of any  $L^2(dw)$ function (see Subsection~\ref{sec:Chang}),  and all regular kernels $\boldsymbol S$ of order zero, we have 

     \begin{equation}\label{eq:size_a_Q}
     |\mathbf Sa_Q(\mathbf x)|\leq C \big(\boldsymbol c_1+\|S_0\|_{C^{\kappa}(\mathbb{R}^N)}\big)  \Big(1+\frac{\|\mathbf x-\mathbf z_Q\|}{\ell (Q)}\Big)^{-1}\Big(1+\frac{d(\mathbf x,\mathbf z_Q)}{\ell (Q)}\Big)^{-M}, 
\end{equation}
      \begin{equation}\begin{split}\label{eq:Holder_Sa_Q}
     |\mathbf S      & a_Q(\mathbf x) -\mathbf Sa_Q(\mathbf x')|\\
     &\leq C \Big(\boldsymbol c_1+\|S_0\|_{C^{\kappa}(\mathbb{R}^N)}\Big) \frac{\|\mathbf x -\mathbf x'\|}{\ell (Q)}  \Big(1+\frac{\|\mathbf x-\mathbf z_Q\|}{\ell (Q)}\Big)^{-1}\Big(1+\frac{d(\mathbf x,\mathbf x')}{\ell (Q)}\Big)^M\Big(1+\frac{d(\mathbf x,\mathbf z_Q)}{\ell (Q)}\Big)^{-M}.
 \end{split}\end{equation} 
 Moreover, 
  \begin{equation}\label{eq:a_Q_zero}
     \int_{\mathbb{R}^N} \mathbf Sa_Q(\mathbf x)\, dw(\mathbf x)=0. 
 \end{equation}
\end{proposition}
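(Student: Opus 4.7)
My plan is to reduce the three statements to pointwise estimates on convolutions of the building blocks ${\boldsymbol \phi}_t^{*2}$ against the Littlewood--Paley pieces $S_j$ of the kernel $\mathbf S$. Substituting the Chang--Fefferman formula \eqref{eq:A_Q} for $a_Q$ and swapping orders,
\begin{equation*}
\mathbf Sa_Q(\mathbf x)=\lambda_Q^{-1}\int_{\ell(Q)}^{2\ell(Q)}\int_Q K_t(\mathbf x,\mathbf y')\,({\boldsymbol \phi}_t^{*2}*f)(\mathbf y')\,dw(\mathbf y')\,\frac{dt}{t},
\end{equation*}
where $K_t(\mathbf x,\mathbf y'):=\mathbf S({\boldsymbol \phi}_t^{*2}(\cdot,\mathbf y'))(\mathbf x)$; hence the pointwise behaviour of $\mathbf Sa_Q$ is governed by the pointwise behaviour of $K_t$ at scale $t\sim\ell(Q)$.

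For the size bound \eqref{eq:size_a_Q}, write $\mathbf S=\mathbf c_1\,\mathrm{Id}+\sum_{j\in\mathbb Z}\mathbf S_j$ via \eqref{eq:distr_S} and estimate $S_j*{\boldsymbol \phi}_t^{*2}$ using Theorem~\ref{teo:bounds_conv}. When $2^j\leq t$, I apply part~\eqref{teo_a} with $\psi={\boldsymbol \phi}^{*2}$ and $\phi=S_0$ (using $\int S_0\,dw=0$), producing the factor $2^j/t$. When $2^j>t$, I apply part~\eqref{teo_b} to the factorisation ${\boldsymbol \phi}^{*2}=\Delta_k^{2M_1}({\boldsymbol \eta}^{*2})$ (so $m=4M_1$), yielding the large gain $(t/2^j)^{4M_1}$. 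Both bounds carry a decay factor $(1+\|\mathbf x-\mathbf y'\|/(t\vee 2^j))^{-1}$ together with the cut-off $\chi_{[0,2(t\vee 2^j)]}(d(\mathbf x,\mathbf y'))$. Summing the resulting geometric series in $j$, and using the doubling \eqref{eq:growth} to compare volumes, yields
\begin{equation*}
|K_t(\mathbf x,\mathbf y')|\leq C(\mathbf c_1+\|S_0\|_{C^{\kappa}})\,\frac{1}{w(B(\mathbf x,\ell(Q)+d(\mathbf x,\mathbf y')))}\Big(1+\tfrac{\|\mathbf x-\mathbf y'\|}{\ell(Q)}\Big)^{-1}\Big(1+\tfrac{d(\mathbf x,\mathbf y')}{\ell(Q)}\Big)^{-M}
\end{equation*}
for any $M\leq 4M_1+N-1$. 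Finally, the Cauchy--Schwarz inequality in $(t,\mathbf y')$ combined with \eqref{eq:lambda} absorbs $\lambda_Q^{-1}$ (the two remaining $w(Q)^{1/2}$ factors compare to the volume factor above), and on $\text{supp}(a_Q)\subseteq Q^{\diamond}$ the quantities $\|\mathbf x-\mathbf y'\|$ and $d(\mathbf x,\mathbf y')$ are comparable to $\|\mathbf x-\mathbf z_Q\|$ and $d(\mathbf x,\mathbf z_Q)$ respectively, delivering \eqref{eq:size_a_Q}.

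The Hölder bound \eqref{eq:Holder_Sa_Q} splits into two regimes. If $\|\mathbf x-\mathbf x'\|\leq\ell(Q)$, I repeat the dyadic strategy with Theorem~\ref{teo:bounds_conv}~\eqref{teo_c} replacing \eqref{teo_a} (respectively \eqref{teo_d} replacing \eqref{teo_b}); this introduces the extra factor $\|\mathbf x-\mathbf x'\|/\ell(Q)$. If $\|\mathbf x-\mathbf x'\|>\ell(Q)$, the ratio $\|\mathbf x-\mathbf x'\|/\ell(Q)\geq 1$ and the claim follows from the size bound \eqref{eq:size_a_Q} at $\mathbf x$ and $\mathbf x'$ together with the elementary inequality $(1+d(\mathbf x',\mathbf z_Q)/\ell(Q))^{-M}\leq(1+d(\mathbf x,\mathbf x')/\ell(Q))^M(1+d(\mathbf x,\mathbf z_Q)/\ell(Q))^{-M}$ obtained from the triangle inequality in the orbit distance (and its Euclidean analogue for the $(1+\|\cdot\|/\ell(Q))^{-1}$ factor).

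The cancellation \eqref{eq:a_Q_zero} rests on the factorisation $a_Q=\Delta_k^{2M_1}\tilde a_Q$: since $\tilde a_Q\in C_c^\infty(\mathbb R^N)$ by Proposition~\ref{prop:changg}, its Dunkl transform is Schwartz, so $\mathcal F a_Q(\xi)=(-\|\xi\|^2)^{2M_1}\mathcal F\tilde a_Q(\xi)$ vanishes to order $4M_1$ at the origin. Multiplying by the bounded $\theta$ produces $\mathcal F(\mathbf Sa_Q)$ continuous at $0$ with value $0$. Since \eqref{eq:size_a_Q} with any $M>\mathbf N$ (permitted by \eqref{eq:constantsN}) places $\mathbf Sa_Q$ in $L^1(dw)$, the identity $\int g\,dw=c_k\mathcal F g(0)$ valid for $g,\mathcal F g\in L^1(dw)$ forces \eqref{eq:a_Q_zero}.

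The main obstacle is the simultaneous bookkeeping of the two distances: Theorem~\ref{teo:bounds_conv} supplies only a single power of $(1+\|\mathbf x-\mathbf y'\|/t)^{-1}$ in the Euclidean norm, yet the proposition demands arbitrary polynomial decay of order $M\leq 4M_1+N-1$ in the orbit distance $d$. Extracting the latter requires carefully combining the support cut-off with the $(t/2^j)^{4M_1}$ gain from part~\eqref{teo_b} \emph{before} summing the dyadic decomposition of $\mathbf S$, and is precisely why the parameter $M_1$ in \eqref{eq:M_1} must be chosen sufficiently large.
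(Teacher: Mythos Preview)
Your strategy for the size and H\"older bounds \eqref{eq:size_a_Q}--\eqref{eq:Holder_Sa_Q} is exactly the paper's: after scaling to $\ell(Q)=1$, decompose $\mathbf S=\boldsymbol c_1\mathrm{Id}+\sum_j\mathbf S_j$, apply Theorem~\ref{teo:bounds_conv}\eqref{teo_a}/\eqref{teo_c} for $2^j\le t$ and \eqref{teo_b}/\eqref{teo_d} for $2^j>t$ (using ${\boldsymbol\phi}^{*2}=\Delta_k^{2M_1}({\boldsymbol\eta}^{*2})$), then sum the geometric series and finish with Cauchy--Schwarz against $\lambda_Q$. One bookkeeping remark: your intermediate bound on $K_t$ with the volume factor $w(B(\mathbf x,\ell(Q)+d(\mathbf x,\mathbf y')))^{-1}$ together with decay exponent $M\le 4M_1+N-1$ is slightly misstated---the extra $N$ in the admissible range of $M$ comes precisely from converting $w(B(\mathbf x,2^j))^{-1}$ to $w(B(\mathbf z_Q,\ell(Q)))^{-1}\sim w(Q)^{-1}$ via \eqref{eq:growth} on the support of the cut-off (where $d(\mathbf x,\mathbf z_Q)\lesssim 2^j$), as the paper does. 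With your normalisation the honest exponent is $4M_1-1$; after the comparison $w(Q)\lesssim w(B(\mathbf x,\ell(Q)+d))$ you recover the full $4M_1+N-1$. The final bounds are unaffected.

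For the cancellation \eqref{eq:a_Q_zero} you take a genuinely different route from the paper. The paper observes that $a_Q\in H^1_{\rm Dunkl}$ (by Propositions~\ref{prop:size_a} and~\ref{propo:integral_0_a}), invokes the boundedness of $\mathbf S$ on $H^1_{\rm Dunkl}$ from \cite{DzHe-JFA}, and uses that every $H^1_{\rm Dunkl}$ function integrates to zero. Your argument is more elementary and self-contained: $\mathbf Sa_Q\in L^1(dw)$ by \eqref{eq:size_a_Q} with $M>\mathbf N$, so $\mathcal F(\mathbf Sa_Q)$ is continuous; it agrees a.e.\ with $\theta\,\mathcal Fa_Q$, and since $\theta$ is bounded while $\mathcal Fa_Q(\xi)\to\mathcal Fa_Q(0)=c_k^{-1}\!\int a_Q\,dw=0$ (Proposition~\ref{propo:integral_0_a}), one gets $\mathcal F(\mathbf Sa_Q)(0)=0$. (The vanishing to order $4M_1$ that you invoke through $a_Q=\Delta_k^{2M_1}\tilde a_Q$ is more than needed---first-order vanishing suffices once continuity is known.) Both arguments are correct; yours avoids the external reference to $H^1$-boundedness at the cost of a small Fourier computation.
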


\begin{proof}
    By scaling we may assume that $\ell (Q)=1$. Further, thanks to Proposition~\ref{prop:size_a} and~\eqref{eq:distr_S}, it suffices to consider 
    \begin{equation*}
        \begin{split}
            \sum_{j\in \mathbb Z} \int_{\mathbb{R}^N} S_j(\mathbf x,\mathbf y)a_Q(\mathbf y)\, dw(\mathbf y)=\sum_{j\in \mathbb Z} \mathbf S_ja_Q(\mathbf x).
        \end{split}
    \end{equation*}
    From the definition of $a_Q$ (see~\eqref{eq:A_Q}), we get 
    \begin{equation}\label{eq:SjaQ}
        \mathbf S_ja_Q(\mathbf x)=\lambda_Q^{-1} \int_1^2\int_Q (S_j*{\boldsymbol \phi}_t^{*2})(\mathbf x,\mathbf y)({\boldsymbol \phi}_t^{*2} *f)(\mathbf y)\, dw(\mathbf y)\,\frac{dt}{t}. 
    \end{equation}
    In order to prove~\eqref{eq:size_a_Q}, we split the summation  into two parts. 
    
    Part 1: $j\leq 0$. Then, by part~\eqref{teo_a} of Theorem \ref{teo:bounds_conv}, we conclude that for $1\leq t\leq 2$ and $\mathbf y\in Q$, one has 
    \begin{align*}
    |(S_j*{\boldsymbol \phi}_t^{*2})(\mathbf x,\mathbf y)|\leq C\| S_0\|_{C^\kappa(\mathbb{R}^N)} w(B(\mathbf x,1))^{-1}2^{j} \Big(1+\| \mathbf x-\mathbf z_Q\|\Big)^{-1}\chi_{[0,4\sqrt{N}]}(d(\mathbf x,\mathbf z_Q)).    
    \end{align*}
    
Recall that $w(B(\mathbf x,1))\sim w(B(\mathbf z_Q,1))$ for $d(\mathbf x,\mathbf z_Q)\leq 4\sqrt{N}$ (see~\eqref{eq:measure} and~\eqref{eq:growth}). Hence, using~\eqref{eq:SjaQ}, the Cauchy-Schwarz inequality,  and~\eqref{eq:lambda}, we obtain 
 \begin{equation}\label{eq:Sa_j_minus} |\mathbf S_ja_Q(\mathbf x)|\leq   C\| S_0\|_{C^\kappa(\mathbb{R}^N)}2^{j} \Big(1+\| \mathbf x-\mathbf z_Q\|\Big)^{-1}\chi_{[0,4\sqrt{N}]}(d(\mathbf x,\mathbf z_Q)).
 \end{equation}

  Part 2: $j>  0$. We remark that $w(B(\mathbf x,2^j))\sim w(B(\mathbf z_Q,2^j))$ if $d(\mathbf x,\mathbf z_Q)\leq 4\sqrt{N}2^j$ (see~\eqref{eq:measure} and~\eqref{eq:growth}). So, from the part~\eqref{teo_b} of Theorem~\ref{teo:bounds_conv} and ~\eqref{eq:growth}, we deduce  that for $1\leq t\leq 2$ and $\mathbf y\in Q$, one has 
    \begin{equation*}
    \begin{split}
        |(S_j*{\boldsymbol \phi}_t^{*2})(\mathbf x,\mathbf y)|&\leq C\| S_0\|_{C^\kappa(\mathbb{R}^N)} w(B(\mathbf z_Q,2^j))^{-1}2^{-4M_1j} \Big(1+\frac{\| \mathbf x-\mathbf z_Q\|}{2^j}\Big)^{-1}\chi_{[0,4\sqrt{N}2^j]}(d(\mathbf x,\mathbf z_Q))\\
        &\leq C \| S_0\|_{C^\kappa(\mathbb{R}^N)} w(B(\mathbf z_Q,1))^{-1}2^{-(4M_1+N)j} \Big(1+\frac{\| \mathbf x-\mathbf z_Q\|}{2^j}\Big)^{-1}\chi_{[0,4\sqrt{N}2^j]}(d(\mathbf x,\mathbf z_Q)).
    \end{split}\end{equation*}
    Consequently, by~\eqref{eq:SjaQ},  \eqref{eq:lambda}, and the Cauchy-Schwarz inequality, we get  
    \begin{equation}\label{eq:Saj_plus}
        \begin{split}
             |\mathbf S_ja_Q(\mathbf x)|&\leq   C\| S_0\|_{C^\kappa(\mathbb{R}^N)} 2^{-(4M_1+N)j} \Big(1+\frac{\| \mathbf x-\mathbf z_Q\|}{2^j}\Big)^{-1}\chi_{[0,4\sqrt{N}2^j]}(d(\mathbf x,\mathbf z_Q))\\
            & \leq   C\| S_0\|_{C^\kappa(\mathbb{R}^N)} 2^{-(4M_1+N-1)j} \big(1+\| \mathbf x-\mathbf z_Q\|\big)^{-1}\chi_{[0,4\sqrt{N}2^j]}(d(\mathbf x,\mathbf z_Q)).
             \\
        \end{split}
    \end{equation}
    Summing up~\eqref{eq:Sa_j_minus} and~\eqref{eq:Saj_plus}, we get the desired estimate~\eqref{eq:size_a_Q}. 

    We now turn for proving~\eqref{eq:Holder_Sa_Q}. We may assume that $\| \mathbf x-\mathbf x'\|\leq 2$, otherwise~\eqref{eq:Holder_Sa_Q} follows from~\eqref{eq:size_a_Q}.  Thanks to Proposition~\ref{propo:regularity_a}, it suffices to deal with the sum
   \begin{equation*}
        \begin{split}
         \sum_{j\in \mathbb Z} \big(\mathbf S_ja_Q(\mathbf x)-\mathbf S_ja_Q(\mathbf x')\big)=   \sum_{j\in \mathbb Z} \int_{\mathbb{R}^N} \big(S_j(\mathbf x,\mathbf y)a_Q(\mathbf y)-S_j(\mathbf x',\mathbf y)a_Q(\mathbf y)\big)\, dw(\mathbf y). 
        \end{split}
    \end{equation*}
    Part 1: $j\leq 0$. Recall that $\int_{\mathbb{R}^N} S_0(\mathbf x)\, dw(\mathbf x)=0$.  Using part~\eqref{teo_c} of Theorem~\ref{teo:bounds_conv}, for $1\leq t\leq 2$ and $\mathbf y\in Q$, we have 
   \begin{equation*}
       \begin{split}
          |S_j*{\boldsymbol \phi}_t^{*2}(\mathbf x,\mathbf y)&- S_j*{\boldsymbol \phi}_t^{*2}(\mathbf x',\mathbf y)|\\ 
          &\leq C\|S_0\|_{C^\kappa(\mathbb{R}^N)} w(B(\mathbf x,1))^{-1}2^j\|\mathbf x-\mathbf x'\|\Big(1+\|\mathbf x-\mathbf z_Q\|\Big)^{-1} \chi_{[0,8\sqrt{N}]}(d(\mathbf x,\mathbf z_Q)). 
       \end{split}
   \end{equation*}
   Utilizing~\eqref{eq:SjaQ} together with the Cauchy-Schwarz inequality, we obtain  
   \begin{equation}\label{eq:Sa_minus1}
       |\mathbf S_ja_Q(\mathbf x)-\mathbf S_ja_Q(\mathbf x')|\leq C \| S_0\|_{C^\kappa(\mathbb{R}^N)}2^j \| \mathbf x-\mathbf x'\|(1+\|\mathbf x-\mathbf z_Q\|)^{-1} \chi_{[0,8\sqrt{N} ]}(d(\mathbf x,\mathbf z_Q)). 
   \end{equation}

   Part 2: $j>0$. Recall that ${\boldsymbol \phi}*{\boldsymbol \phi}=\Delta_k^{2M_1}({\boldsymbol \eta}*{\boldsymbol \eta})$ and $\text{supp}\, ({\boldsymbol \eta}*{\boldsymbol \eta})\subseteq B(0,1/2)$. Applying the part~\eqref{teo_d} of Theorem~\ref{teo:bounds_conv}, we conclude that for $1\leq t\leq 2$ and $\mathbf y\in Q$, one has 
   \begin{equation*}
       \begin{split}
            |S_j*& {\boldsymbol \phi}_t^{*2}(\mathbf x,\mathbf y)- S_j*{\boldsymbol \phi}_t^{*2}(\mathbf{x}',\mathbf{y})|\\
            &\leq C \|S_0\|_{C^\kappa(\mathbb{R}^N)}2^{-4M_1j}\frac{\|\mathbf x-\mathbf x'\|}{2^j} \Big(1+\frac{\|\mathbf x-\mathbf z_Q\|}{2^j}\Big)^{-1}w(B(\mathbf x,2^{j}))^{-1} \chi_{[0,2^{j+3}\sqrt{N}]}(d(\mathbf x,\mathbf z_Q))\\
            &\leq  C \|S_0\|_{C^\kappa(\mathbb{R}^N)}2^{-(4M_1+N)j}\frac{\|\mathbf x-\mathbf x'\|}{2^j} \Big(1+\frac{\|\mathbf x-\mathbf z_Q\|}{2^j}\Big)^{-1}w(B(\mathbf x,1))^{-1} \chi_{[0,2^{j+3}\sqrt{N}]}(d(\mathbf x,\mathbf z_Q)),\\
       \end{split}
   \end{equation*}
where in the last inequality we have used~\eqref{eq:growth}. Hence, for any $0<M\leq 4M_1+N$, we have 
\begin{equation}\label{eq:Sa_minus2}
    \begin{split}
        |\mathbf S_j a_Q(\mathbf x)-\mathbf S_ja_Q(\mathbf x')|\leq C  \|S_0\|_{C^\kappa(\mathbb{R}^N)}2^{-Mj}\|\mathbf x-\mathbf x'\| (1+\|\mathbf x-\mathbf z_Q\|)^{-1} \chi_{[0,2^{j+4}\sqrt{N}]}(d(\mathbf x,\mathbf z_Q)).
    \end{split}
\end{equation}
Now using~\eqref{eq:a_Qminus}  and  summing  the inequalities~\eqref{eq:Sa_minus1} and~\eqref{eq:Sa_minus2}, we get 
\begin{equation*}
    \begin{split}
          |\mathbf Sa_Q(\mathbf x)-\mathbf Sa_Q({\mathbf{x}}')|& \leq C\boldsymbol c_1\|\mathbf x-\mathbf x'\|(1+\|\mathbf x-\mathbf z_Q\|)^{-1} \chi_{[0,8\sqrt{N}]}(d(\mathbf x, \mathbf z_Q))\\
          & \hskip 1cm + C\| S_0\|_{C^\kappa(\mathbb{R}^N)}\sum_{j\leq 0} 2^j\|\mathbf x-\mathbf x'\|(1+\| \mathbf x-\mathbf z_Q\|)^{-1} \chi_{[0, 8\sqrt{N}]} (d(\mathbf x,\mathbf z_Q))\\
          &\hskip 1cm + C  \| S_0\|_{C^\kappa(\mathbb{R}^N)}\sum_{2^j> d(\mathbf x,\mathbf z_Q)/128\sqrt{N}} 2^{-jM}\|\mathbf x-\mathbf x'\|(1+\| \mathbf x-\mathbf z_Q\|)^{-1} \\
         & \leq  C \big(\boldsymbol c_1+\|S_0\|_{C^{\kappa}(\mathbb{R}^N)}\big)\|\mathbf x-{\mathbf x'}\|(1+\|\mathbf x-\mathbf z_Q\|)^{-1} (1+d(\mathbf x,\mathbf z_Q))^{-M},
    \end{split}
\end{equation*}
which finishes the proof of~\eqref{eq:Holder_Sa_Q}. 

To get~\eqref{eq:a_Q_zero}, we note that by Propositions  \ref {prop:size_a} and \ref{propo:integral_0_a} the functions $a_Q$ belong to the Hardy space $H_{\rm Dunkl}^1$. Hence  \cite[Theorem 1.2]{DzHe-JFA} asserts that  $\mathbf Sa_Q\in H^1_{\rm Dunkl}$.   So~\eqref{eq:a_Q_zero} follows, since all the functions from $H^1_{\rm Dunkl}$ have integral zero (see Section \ref{sec:atomic}). 
\end{proof}

\begin{corollary}\label{cor:real_im} 
    Let $a_Q'(\mathbf x)=\text{\rm Re}\, a_Q(\mathbf x)$, $a_Q''(\mathbf x)=\text{\rm Im}\, a_Q(\mathbf x)$. Then  Proposition~\ref{prop:action} holds if we replace $a_Q$ either by $a_Q'$ or {by} $a_Q''$.  
\end{corollary}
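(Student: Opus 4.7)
The plan is to reduce Corollary~\ref{cor:real_im} to Proposition~\ref{prop:action} via the decomposition
\begin{equation*}
a_Q'=\tfrac{1}{2}(a_Q+\overline{a_Q}),\qquad a_Q''=\tfrac{1}{2i}(a_Q-\overline{a_Q}),
\end{equation*}
combined with the $\mathbb{C}$-linearity of the multiplier operator $\mathbf{S}$. Thus $\mathbf{S}a_Q'=\tfrac{1}{2}(\mathbf{S}a_Q+\mathbf{S}\overline{a_Q})$ and $\mathbf{S}a_Q''=\tfrac{1}{2i}(\mathbf{S}a_Q-\mathbf{S}\overline{a_Q})$, so it suffices to establish \eqref{eq:size_a_Q}, \eqref{eq:Holder_Sa_Q}, and \eqref{eq:a_Q_zero} for $\mathbf{S}\overline{a_Q}$ with the same form of the constant, after which the corollary follows from the triangle inequality.

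The key step is to show that $\overline{a_Q[f]}$ is itself the function $a_Q$ associated by the Chang--Fefferman decomposition to $\overline{f}\in L^2(dw)$. For this I will first verify that the kernel ${\boldsymbol{\phi}}_t^{*2}(\mathbf{x},\mathbf{y})$ is real-valued. Because ${\boldsymbol{\phi}}$ is a real-valued radial Schwartz function, $\mathcal{F}{\boldsymbol{\phi}}$ is real and radial, hence $\mathcal{F}({\boldsymbol{\phi}}*{\boldsymbol{\phi}})=c_k(\mathcal{F}{\boldsymbol{\phi}})^2$ is real and radial, so ${\boldsymbol{\phi}}^{*2}$ (and therefore each dilate ${\boldsymbol{\phi}}_t^{*2}$) is real-valued and radial. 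The R\"osler formula~\eqref{eq:translation-radial} then expresses $\tau_{\mathbf{x}}{\boldsymbol{\phi}}_t^{*2}(-\mathbf{y})$ as an integral of a real-valued function against the probability measure $\mu_{\mathbf{x}}$, which is real. With this in hand, direct inspection of \eqref{eq:lambda} and \eqref{eq:A_Q} yields $\lambda_Q[\overline{f}]=\lambda_Q[f]$ and $a_Q[\overline{f}]=\overline{a_Q[f]}$.

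Once this identification is in place, Proposition~\ref{prop:action} applies verbatim to $\overline{a_Q}$ (viewed as an $a_Q$ in the Chang--Fefferman decomposition of $\overline{f}$) with the \emph{same} constant $C$, since the constant depends only on the kernel $\boldsymbol{S}$ and on $\kappa$, not on the input $L^2$ function. Combining the bounds for $\mathbf{S}a_Q$ and $\mathbf{S}\overline{a_Q}$ by the triangle inequality gives the size estimate \eqref{eq:size_a_Q} and the regularity estimate \eqref{eq:Holder_Sa_Q} for $\mathbf{S}a_Q'$ and $\mathbf{S}a_Q''$; the cancellation \eqref{eq:a_Q_zero} is immediate from linearity, since $\int\mathbf{S}a_Q\,dw=\int\mathbf{S}\overline{a_Q}\,dw=0$.

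I expect no serious obstacle. The only subtlety is the reality of ${\boldsymbol{\phi}}_t^{*2}(\mathbf{x},\mathbf{y})$, because Dunkl translations need not commute with complex conjugation on general inputs, but R\"osler's representation settles this immediately for the radial kernel at hand, and the rest is linearity.
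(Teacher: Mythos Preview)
Your proof is correct and is essentially the natural argument the paper leaves implicit (the corollary is stated without proof). The only nontrivial point is the reality of the kernel ${\boldsymbol\phi}_t^{*2}(\mathbf{x},\mathbf{y})$, which you handle correctly via R\"osler's formula~\eqref{eq:translation-radial}; once this is established, your identification $\overline{a_Q[f]}=a_Q[\overline f]$ together with the uniformity of the constant $C$ in Proposition~\ref{prop:action} (it is independent of the $L^2$ input) gives the result by linearity and the triangle inequality.
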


\subsection{Main lemma}
Following Uchiyama~\cite{Uchiyama} we treat {$\mathbb C^{d+1}$ as a $2(d+1)$-dimensional } vector space over $\mathbb R$ with the identification { 
\begin{align*}
     \mathbb C^{d+1}\ni \overrightarrow {\boldsymbol v}\mapsto V( \overrightarrow {\boldsymbol v}) =(\text{Re}\,v_0, \text{Im}\,v_0,  \text{Re}\,v_1,\text{Im}\,v_1,  \text{Re}\,v_2,\text{Im}\,v_2,\dots , \text{Re}\,v_d,\text{Im}\,v_d)\in\mathbb R^{2d+2},
\end{align*}
\begin{align*}
    \overrightarrow {\boldsymbol v}=(v_0, v_1,v_2,\dots ,v_d). 
\end{align*}
Then $\| \overrightarrow {\boldsymbol v}\|=\| V(\overrightarrow {\boldsymbol v})\|$ and $\langle V(\overrightarrow {\boldsymbol v}), V(\overrightarrow {\boldsymbol w })\rangle_{\mathbb R^{2d+2}} =\text{Re} \, \langle {\overrightarrow {\boldsymbol v}} , {\overrightarrow {\boldsymbol w}}\rangle_{\mathbb C^{d+1}} =\text{Re} \, \langle  {\overrightarrow {\boldsymbol w}} , {\overrightarrow {\boldsymbol v}}\rangle_{\mathbb C^{d+1}}$. Here $ \langle {\overrightarrow {\boldsymbol v}} ,  {\overrightarrow {\boldsymbol w}}\rangle_{\mathbb C^{d+1}}=\sum_{j=0}^d v_j\overline{ w_j}$ denotes the inner product in the unitary space $\mathbb C^{d+1}$.  
}
\

{
\begin{lemma}(cf. \cite[Lemma 2.3]{Uchiyama}\label{cor:b_Q_ort})
    Assume that a system  $\overrightarrow S={ (\boldsymbol S^{\{0\}}}, \boldsymbol S^{\{1\}},\boldsymbol S^{\{2\}},\dots,\boldsymbol S^{\{d\}})$ of regular kernels of order zero satisfies the condition~\eqref{eq:triangle_new}. Then there is a constant $C_{11}\geq 1$ such that for any $a_Q$ from the Chang--Fefferman decomposition (see Subsection~\ref{sec:Chang}) and for any unit vector $\overrightarrow{\boldsymbol \nu}\in\mathbb C^{d+1}$ there is a function   $\overrightarrow{b_Q} =({b_Q^{\{0\}}, } b_Q^{\{1\}},b_Q^{\{2\}},\dots , b_Q^{\{d\}})$  (which takes values in $\mathbb C^{d+1}$) such that for all $0<M\leq  (4M_1+N-1)/2$ and $\mathbf{x},\mathbf{x}' \in \mathbb{R}^N$, we have
    \begin{equation}\label{eq:b_Q_bound}
        \|\overrightarrow{b_Q}(\mathbf x)\|\leq C_{11} \left(1+\frac{\|\mathbf{x}-\mathbf{z}_Q\|}{\ell(Q)}\right)^{-1}\left(1+\frac{d(\mathbf{x},\mathbf{z}_Q)}{\ell(Q)}\right)^{-2M};
\end{equation}

\begin{equation*}
\begin{split}
&\|\overrightarrow{b_Q}(\mathbf x)  -\overrightarrow{b_Q}(\mathbf{x}')\| \\
&\leq C_{11} \frac{\|\mathbf{x}-\mathbf{x}'\|}{\ell(Q)}\left(1+\frac{\|\mathbf{x}-\mathbf{z}_Q\|}{\ell(Q)}\right)^{-1}\left(1+\frac{d(\mathbf{x},\mathbf{x}')}{\ell(Q)}\right)^{2M}\left(1+\frac{d(\mathbf{x},\mathbf{z}_Q)}{\ell(Q)}\right)^{-2M}; 
\end{split}
\end{equation*} 
\begin{equation}
    \label{eq:int_b_Q_zero}
    \int_{\mathbb{R}^N} \overrightarrow{b_Q}(\mathbf y)\, dw(\mathbf y)=0; 
\end{equation}
\begin{equation}\label{eq:b_Q_id}
    \overrightarrow{S^*}\circ \overrightarrow{b_Q}(\mathbf x){ :=\sum_{j=0}^d \mathbf S^{\{j\} *} b_Q^{\{j\}} } =a_Q(\mathbf x), \quad \langle V(\overrightarrow{b_Q}(\mathbf x)),V(\overrightarrow{\boldsymbol \nu})\rangle_{\mathbb R^{2d+2}} =0. \index[Other]{$\overrightarrow{S^*}\circ $@$\overrightarrow{S^*}\circ $}
\end{equation}

\end{lemma}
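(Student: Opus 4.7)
The plan is to construct $\overrightarrow{b_Q}$ as the image of $a_Q$ under the auxiliary multipliers provided by Proposition~\ref{propo:real_complex}. Given the unit vector $\overrightarrow{\boldsymbol\nu}\in\mathbb C^d$, I will take the symbols $\Theta_j(\xi,\overrightarrow{\boldsymbol\nu})$, $j=1,\dots,d$, from that proposition; they are homogeneous of degree zero, smooth away from the origin, satisfy~(\ref{eq:Uchi1}) and~(\ref{eq:Uchi2}), and have derivatives bounded uniformly in $\overrightarrow{\boldsymbol\nu}$. By the discussion in Subsection~3.1, each $\Theta_j$ is the Dunkl transform of a regular kernel $\boldsymbol T^{\{j\}}$ of order zero, whose associated $C^\kappa$-norm $\|T_0^{\{j\}}\|_{C^\kappa}$ is uniformly bounded. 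I simply put
\begin{equation*}
b_Q^{\{j\}}(\mathbf x):=\mathbf T^{\{j\}}a_Q(\mathbf x),\qquad \overrightarrow{b_Q}:=(b_Q^{\{1\}},\dots,b_Q^{\{d\}}).
\end{equation*}

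With this definition, the size bound~(\ref{eq:b_Q_bound}) and the Lipschitz inequality are immediate consequences of~(\ref{eq:size_a_Q}) and~(\ref{eq:Holder_Sa_Q}) of Proposition~\ref{prop:action} applied with $\mathbf S=\mathbf T^{\{j\}}$ and with the parameter $M$ there replaced by $2M$; since~(\ref{eq:constantsN}) guarantees $2M\le 4M_1+N-1$, we stay within the admissible range and all constants remain uniform in $\overrightarrow{\boldsymbol\nu}$. The zero-integral property~(\ref{eq:int_b_Q_zero}) follows from~(\ref{eq:a_Q_zero}). For the reconstruction identity in~(\ref{eq:b_Q_id}) one computes that $\sum_j \mathbf S^{\{j\}*}\circ\mathbf T^{\{j\}}$ is a Dunkl multiplier with symbol $\sum_j \overline{\theta_j(\xi)}\,\Theta_j(\xi,\overrightarrow{\boldsymbol\nu})\equiv 1$ by~(\ref{eq:Uchi1}); this composition is therefore the identity on $L^2(dw)$ and in particular sends $a_Q$ to $a_Q$.

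The delicate step is the orthogonality in~(\ref{eq:b_Q_id}), and it is the only place where Uchiyama's condition~(\ref{eq:triangle}) is used non-trivially through~(\ref{eq:Uchi2}). Here I will use that $a_Q$ can be taken to be real-valued: since $\boldsymbol\phi$ is real and radial, R\"osler's formula~(\ref{eq:translation-radial}) makes the kernel $\phi_t^{*2}(\mathbf x,\mathbf y)$ real, so real BMO functions produce real Chang--Fefferman atoms, and by splitting a complex $f$ into real and imaginary parts the proof of Theorem~\ref{UchiyamaDecomposition0} reduces to that situation. Writing $m(\xi):=\sum_j \overline{\nu_j}\,\Theta_j(\xi,\overrightarrow{\boldsymbol\nu})$, condition~(\ref{eq:Uchi2}) is precisely $m(\xi)+\overline{m(-\xi)}=0$. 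For real $a_Q$ the identity $\overline{\mathcal F a_Q(\xi)}=\mathcal F a_Q(-\xi)$ (from $\overline{E(-i\xi,\mathbf x)}=E(i\xi,\mathbf x)$; see~(\ref{eq:conjE})), combined with the substitution $\xi\mapsto-\xi$, yields $\mathcal F(\overline{\mathbf M a_Q})(\xi)=\overline{m(-\xi)}\mathcal F a_Q(\xi)=-\mathcal F(\mathbf M a_Q)(\xi)$, where $\mathbf M$ is the Dunkl multiplier with symbol $m$. Thus $\mathbf M a_Q$ is purely imaginary, so
\begin{equation*}
\langle V(\overrightarrow{b_Q}(\mathbf x)),V(\overrightarrow{\boldsymbol\nu})\rangle_{\mathbb R^{2d}}=\operatorname{Re}\Big(\sum_{j=1}^d b_Q^{\{j\}}(\mathbf x)\,\overline{\nu_j}\Big)=\operatorname{Re}\bigl(\mathbf M a_Q(\mathbf x)\bigr)=0,
\end{equation*}
which gives the required orthogonality. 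Apart from this step, the remaining properties of $\overrightarrow{b_Q}$ are routine bookkeeping of the singular integral estimates encoded in Proposition~\ref{prop:action}.
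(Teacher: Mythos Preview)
Your construction and the paper's are built on the same idea: apply to $a_Q$ the Dunkl multipliers with symbols $\Theta_j(\xi,\overrightarrow{\boldsymbol\nu})$ from Proposition~\ref{propo:real_complex}, then read off size, Lipschitz, zero-integral and reconstruction from Proposition~\ref{prop:action} and~\eqref{eq:Uchi1}. Your verification of the orthogonality for real $a_Q$ via $m(\xi)+\overline{m(-\xi)}=0$ is correct and is exactly the mechanism the paper exploits.

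The one difference is organizational. As written, your argument proves the lemma only for \emph{real} $a_Q$ and then asks the reader to reduce Theorem~\ref{UchiyamaDecomposition0} to real $f$. The paper instead proves the lemma for arbitrary (complex) $a_Q$ by a small twist of your construction: it splits $a_Q=a_Q'+ia_Q''$ into real and imaginary parts and sets
\[
b_Q^{\{j\}}=\mathbf Z_{\overrightarrow{\boldsymbol\nu}}^{\{j\}}a_Q'+i\,\mathbf Z_{i\overrightarrow{\boldsymbol\nu}}^{\{j\}}a_Q'',
\]
where $\mathbf Z_{\overrightarrow{\boldsymbol\nu}}^{\{j\}}$, $\mathbf Z_{i\overrightarrow{\boldsymbol\nu}}^{\{j\}}$ are the multipliers with symbols $\Theta_j(\cdot,\overrightarrow{\boldsymbol\nu})$ and $\Theta_j(\cdot,i\overrightarrow{\boldsymbol\nu})$. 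Reconstruction uses~\eqref{eq:Uchi1} for both $\overrightarrow{\boldsymbol\nu}$ and $i\overrightarrow{\boldsymbol\nu}$, and the orthogonality uses~\eqref{eq:Uchi2} twice (once for each unit vector); the bounds come from Corollary~\ref{cor:real_im}. This keeps the lemma self-contained and avoids touching the downstream argument, at the cost of invoking Proposition~\ref{propo:real_complex} for two unit vectors instead of one. Your reduction is equally valid; it simply shifts the real/imaginary split from inside the lemma to the level of Theorem~\ref{UchiyamaDecomposition0}.
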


}

\begin{proof}[Proof of Lemma \ref{cor:b_Q_ort}] The proof follows \cite[proof of Lemma 2.3]{Uchiyama}.
For any unit vector $\overrightarrow{\boldsymbol \nu}\in\mathbb C^{d+1}$, let $\Theta_j(\xi,\overrightarrow{\boldsymbol \nu})$ be as in Proposition \ref{propo:real_complex}. 
Each $\Theta_j(\xi,\overrightarrow{\boldsymbol \nu})$ uniquely corresponds to a regular kernel $\boldsymbol Z^{\{j\}}_{\overrightarrow{\boldsymbol \nu}}$ of order zero. 
For $a_Q$ from the Chang--Fefferman decomposition, set   $a_Q'(\mathbf x)=\text{\rm Re}\, a_Q(\mathbf x)$, $a_Q''(\mathbf x)=\text{\rm Im}\, a_Q(\mathbf x)$. Put  
\begin{equation}\label{eq:ZaQ}
    \begin{split}
        b_Q^{\{j\}} & = \mathbf Z_{\overrightarrow{\boldsymbol \nu}}^{\{j\}} a'_Q+ i \mathbf Z_{i\overrightarrow{\boldsymbol \nu}}^{\{j\}} a''_Q
        = \mathcal F^{-1} \Big( \Theta_j(\xi, \overrightarrow{\boldsymbol \nu} )\mathcal Fa'_Q(\xi) + i  \Theta_j(\xi,i \overrightarrow{\boldsymbol \nu} )\mathcal Fa''_Q(\xi)\Big).
    \end{split}
\end{equation}
It follows from Corollary \ref{cor:real_im} and Proposition \ref{prop:action} that~\eqref{eq:b_Q_bound} --~\eqref{eq:int_b_Q_zero} are satisfied. 

We turn to verify  the first equality in~\eqref{eq:b_Q_id}. To this end, using \eqref{eq:ZaQ}, we write 
\begin{equation*}
    \begin{split}
        \mathcal F\Big(\sum_{j=0}^d\mathbf S^{\{j\}*}b_Q^{\{j\}}\Big)(\xi )&=  \sum_{j=0}^d \overline{\theta_j(\xi)} \Theta_j(\xi,\overrightarrow{\boldsymbol \nu})\mathcal F a_Q'(\xi)+i\sum_{j=0}^d \overline{\theta_j(\xi)} \Theta_j(\xi,i\overrightarrow{\boldsymbol \nu})\mathcal F a_Q''(\xi)\\
        &= \mathcal Fa_Q'(\xi) +i \mathcal Fa_Q''(\xi)=\mathcal Fa_Q(\xi),
    \end{split}
\end{equation*}
where in the second equality we have used~\eqref{eq:Uchi1} for $\overrightarrow{\boldsymbol \nu}$ and $i\overrightarrow{\boldsymbol \nu}$. 

In order to verify the second equality  in~\eqref{eq:b_Q_id} we note that $2\text{\rm Re} (\mathcal F^{-1}g)=\mathcal F^{-1} (g+\overline{\reallywidecheck g})$ for any reasonable function/kernel $g$, where, according to our notation, $\reallywidecheck g(\mathbf x)=g(-\mathbf x)$.  We also recall that $g$ is real-valued if and only if $\overline{\mathcal Fa}={ \reallywidecheck{\mathcal F a}}$. Hence, by~\eqref{eq:ZaQ}, 
\begin{equation*}
    \begin{split}
      &  2 \langle V(\overrightarrow{b_Q}(\mathbf x)),V(\overrightarrow{\boldsymbol \nu})\rangle_{\mathbb R^{2d+2}}  =2 \text{\rm Re} \,\langle \overrightarrow{b_Q}(\mathbf x) , \overrightarrow{\boldsymbol \nu} \rangle_{\mathbb C^{d+1}}
      = 2\text{\rm Re} \Big(\sum_{j=0}^d \overline{\nu_j} b_Q^{\{j\}}
       (\mathbf x)\Big)\\
       &= 2\text{\rm Re }\,\mathcal F^{-1} \Big( \sum_{j=0}^d \overline{\nu_j} \Theta_j (\xi, \overrightarrow{\boldsymbol \nu})\mathcal Fa'_Q(\xi) + i \overline{\nu_j} \Theta_j (\xi, i\overrightarrow{ \boldsymbol \nu})\mathcal Fa''_Q(\xi)\Big)(\mathbf x)\\
       &= \mathcal F^{-1} \Big( \sum_{j=0}^d \overline{\nu_j} \Theta_j (\xi, \overrightarrow{\boldsymbol \nu})\mathcal Fa'_Q(\xi) +  \nu_j\overline{ \Theta_j (-\xi,  \overrightarrow{\boldsymbol \nu})\mathcal Fa'_Q(-\xi)}\Big)(\mathbf x)\\
       & \ \ +  \mathcal F^{-1} \Big( \sum_{j=0}^d i \overline{\nu_j} \Theta_j (\xi,i\overrightarrow{ \boldsymbol \nu})\mathcal Fa''_Q(\xi) -  i\nu_j\overline{ \Theta_j (-\xi, i \overrightarrow{\boldsymbol \nu})\mathcal Fa''_Q(-\xi)}\Big)(\mathbf x)\\
       &= \mathcal F^{-1} \Big( \sum_{j=0}^d \Big(\overline{\nu_j} \Theta_j (\xi, \overrightarrow{\boldsymbol \nu}) +  \nu_j\overline{ \Theta_j (-\xi,  \overrightarrow{\boldsymbol \nu})} \Big)\mathcal Fa'_Q(\xi)\Big)(\mathbf x)\\
       & \ \ +  \mathcal F^{-1} \Big( \sum_{j=0}^d \Big( i \overline{\nu_j} \Theta_j (\xi,i \overrightarrow{\boldsymbol \nu}) -  i\nu_j\overline{ \Theta_j (-\xi, i \overrightarrow{\boldsymbol \nu})} \Big)\mathcal F a''_Q(\xi)\Big)(\mathbf x).
    \end{split}
\end{equation*}
It easily follows from~\eqref{eq:Uchi2} that 
$$  \sum_{j=0}^d \Big(\overline{\nu_j} \Theta_j (\xi, \overrightarrow{\boldsymbol \nu}) +  \nu_j\overline{ \Theta_j (-\xi,  \overrightarrow{\boldsymbol \nu})} \Big) = 0 = \sum_{j=0}^d \Big( i \overline{\nu_j} \Theta_j (\xi,i \overrightarrow{\boldsymbol \nu}) -  i\nu_j\overline{ \Theta_j (-\xi, i \overrightarrow{\boldsymbol \nu})} \Big).$$
Thus the second inequality in~\eqref{eq:b_Q_id} is established. 
\end{proof}


\subsection{Auxiliary functions}\label{sec:auxiliary}
{ Recall that   $N_0>2\mathbf N$ (see~\eqref{eq:constantsN}).} For a compactly supported ${\rm BMO}(\mathbf X)$-function $f$ and for $j,m\in\mathbb Z$, we define 
\begin{equation}\label{eq:tau_def}
    \boldsymbol \tau_j(\mathbf x) =\boldsymbol \tau_j(f)(\mathbf x) :=\sum_{Q\in \mathcal Q_{2^{-j}}} \lambda_Q \Big(1+\frac{d(\mathbf z_Q,\mathbf x)}{2^{-j}}\Big)^{-N_0},\index[Other]{$ \boldsymbol \tau_j(\mathbf{x})$@$ \boldsymbol \tau_j(\mathbf x)=\boldsymbol \tau_j(f)(\mathbf{x})$}
\end{equation}
\begin{equation}\label{eq:sigma_def} \boldsymbol\sigma_m (\mathbf x)=\boldsymbol\sigma_m(f)(\mathbf x)=\sum_{j\leq m} \Big(\frac{9}{10}\Big)^{m -j}\boldsymbol\tau_{j}(\mathbf x) =\sum_{j=0}^\infty \Big(\frac{9}{10}\Big)^j\boldsymbol \tau_{m-j}(\mathbf x).\index[Other]{$\boldsymbol\sigma_m(\mathbf{x})$@$\boldsymbol\sigma_m(\mathbf{x})=\boldsymbol\sigma_m(f)(\mathbf{x})$}\end{equation}
where $\lambda_Q$ are the coefficients from the Chang--Fefferman decomposition of $f$ (see~\eqref{eq:lambda}).

Then 
\begin{equation}\label{eq:sigSum} \boldsymbol\sigma_m (\mathbf x)=\frac{9}{10}\boldsymbol\sigma_{m-1}(\mathbf{x})+\boldsymbol\tau_m(\mathbf x).
\end{equation} 

It can be easily proved using~\eqref{eq:finite_integral} and~\eqref{eq:doubling} that for ${K} >N$ there is a constant $C_K>0$ such that for all $j\in\mathbb Z$ and $\mathbf x\in\mathbb R^N$, one has 
\begin{equation} \label{eq:PQ_2}
\sum_{Q\in \mathcal Q_{2^{-j}}}  \Big(1+\frac{d(\mathbf z_Q,\mathbf x)}{2^{-j}}\Big)^{-{K}}\leq C_K.
\end{equation}

\begin{lemma}\label{lem:sum_square}
{There is a constant $C>0$ such that for any compactly  supported  ${\rm BMO}(\mathbf{X})$-function $f$ and all $\mathbf{x} \in \mathbb{R}^N$, we have}
    $$\boldsymbol\tau_j(\mathbf x)^2\leq C\sum_{P\in \mathcal Q_{2^{-j}}} \lambda_P^2\Big(1+\frac{d(\mathbf x, \mathbf z_P)}{2^{-j}}\Big)^{-N_0},$$
where $\lambda_P$ are the coefficients from the Chang-Fefferman decomposition of $f$.  \end{lemma}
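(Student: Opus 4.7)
The plan is to apply the Cauchy--Schwarz inequality to the defining sum of $\boldsymbol\tau_j(\mathbf x)$, splitting the weight $(1+d(\mathbf z_Q,\mathbf x)/2^{-j})^{-N_0}$ symmetrically as a product of two copies of $(1+d(\mathbf z_Q,\mathbf x)/2^{-j})^{-N_0/2}$, and then absorbing one of the resulting factors using~\eqref{eq:PQ_2}.

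More concretely, I would first write
\begin{equation*}
\boldsymbol\tau_j(\mathbf x)=\sum_{Q\in\mathcal Q_{2^{-j}}}\Big[\lambda_Q\Big(1+\tfrac{d(\mathbf z_Q,\mathbf x)}{2^{-j}}\Big)^{-N_0/2}\Big]\cdot\Big[\Big(1+\tfrac{d(\mathbf z_Q,\mathbf x)}{2^{-j}}\Big)^{-N_0/2}\Big],
\end{equation*}
and then apply Cauchy--Schwarz to obtain
\begin{equation*}
\boldsymbol\tau_j(\mathbf x)^2\le\Big(\sum_{P\in\mathcal Q_{2^{-j}}}\lambda_P^2\Big(1+\tfrac{d(\mathbf z_P,\mathbf x)}{2^{-j}}\Big)^{-N_0}\Big)\cdot\Big(\sum_{Q\in\mathcal Q_{2^{-j}}}\Big(1+\tfrac{d(\mathbf z_Q,\mathbf x)}{2^{-j}}\Big)^{-N_0}\Big).
\end{equation*}
By the assumption~\eqref{eq:constantsN} one has $N_0>2\mathbf N>N$, so estimate~\eqref{eq:PQ_2} applies with $K=N_0$ and bounds the second factor on the right by a constant $C_{N_0}$, independent of $j$ and $\mathbf x$. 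This yields the claimed inequality with $C=C_{N_0}$.

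No serious obstacle is expected: the only point to verify is that $N_0>N$, which is guaranteed by the standing choice~\eqref{eq:constantsN}, and that the orbit-distance $d(\mathbf z_Q,\mathbf x)$ plays no special role here beyond being used inside the bound~\eqref{eq:PQ_2} already proved.
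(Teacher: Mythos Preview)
Your proof is correct and is essentially the same as the paper's: the paper expands $\boldsymbol\tau_j(\mathbf x)^2$ as a double sum and applies $ab\leq(a^2+b^2)/2$ to $\lambda_P\lambda_Q$, which by symmetry yields exactly the factorization you obtain via Cauchy--Schwarz, and then invokes~\eqref{eq:PQ_2} with $K=N_0$ just as you do. Your presentation is arguably slightly cleaner, but the underlying argument is identical.
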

\begin{proof}
    Because $ab \leq (a^2+b^2)/{2}$ for all $a,b \in \mathbb{R}$, we have 
    \begin{equation*}
    \begin{split}
        \boldsymbol\tau_j(\mathbf x)^2&=\sum_{(P, Q) \in \mathcal Q_{2^{-j}}\times \mathcal Q_{2^{-j}}} \lambda_P \lambda_Q \Big(1+\frac{d(\mathbf z_P,\mathbf x)}{2^{-j}}\Big)^{-N_0}\Big(1+\frac{d(\mathbf z_Q,\mathbf x)}{2^{-j}}\Big)^{-N_0}\\&\leq \frac{1}{2}
        \sum_{(P, Q) \in \mathcal Q_{2^{-j}}\times \mathcal Q_{2^{-j}}}
        \lambda_P^2 \Big(1+\frac{d(\mathbf z_P,\mathbf x)}{2^{-j}}\Big)^{-N_0}\Big(1+\frac{d(\mathbf z_Q,\mathbf x)}{2^{-j}}\Big)^{-N_0}\\ 
        &\ \ +\frac{1}{2}\sum_{(P, Q) \in \mathcal Q_{2^{-j}}\times \mathcal Q_{2^{-j}}}
        \lambda_Q^2 \Big(1+\frac{d(\mathbf z_P,\mathbf x)}{2^{-j}}\Big)^{-N_0}\Big(1+\frac{d(\mathbf z_Q,\mathbf x)}{2^{-j}}\Big)^{-N_0}\\
        & = 
    \sum_{P\in \mathcal Q_{2^{-j}}}\lambda_P^2 \Big(1+\frac{d(\mathbf z_P,\mathbf x)}{2^{-j}}\Big)^{-N_0}\sum_{Q \in\mathcal Q_{2^{-j}}}\Big(1+\frac{d(\mathbf z_Q,\mathbf x)}{2^{-j}}\Big)^{-N_0}.
    \end{split}\end{equation*}
 Now, thanks to ~\eqref{eq:PQ_2}, the proof is finished.
\end{proof}

\begin{lemma}[Christ-Geller\texorpdfstring{~\cite{CHG}}{CHG}]\label{lem:Ch-Gell12}
There is a constant $C_{12} \geq 1$ such that for all compactly supported $f\in {\rm BMO}(\mathbf X)$ {and for all $j,m\in\mathbb Z$,} one has 
    \begin{equation}\label{eq:tau_infty}\|\boldsymbol\tau_j\|_\infty \leq C_{12}\| f\|_{{\rm BMO}(\mathbf X)},
    \end{equation}
    \begin{equation}\label{eq:sigma_infty}\|\boldsymbol\sigma_m \|_\infty \leq C_{12}\| f\|_{{\rm BMO}(\mathbf X)},
    \end{equation}
    \begin{equation}\label{eq:sigma_tau}\int_{{ Q^{\diamond}}} \sum_{2^{j}\geq \ell (Q)^{-1}} \boldsymbol\sigma_j(\mathbf x)\boldsymbol\tau_j(\mathbf x)dw(\mathbf x)\leq C_{12} w(Q)\| f\|_{{\rm BMO}(\mathbf X)}^2 \quad \text{for all } Q\in\mathcal Q.
    \end{equation}
\end{lemma}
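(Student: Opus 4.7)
The first two bounds are quick. For \eqref{eq:tau_infty}, factor the uniform bound $\lambda_Q \leq C_{10}\|f\|_{{\rm BMO}(\mathbf X)}$ from \eqref{eq:lambda_bound} out of the sum in \eqref{eq:tau_def} and apply \eqref{eq:PQ_2} with exponent $K=N_0>N$. For \eqref{eq:sigma_infty}, expand $\boldsymbol\sigma_m = \sum_{k\geq 0}(9/10)^k \boldsymbol\tau_{m-k}$ as in \eqref{eq:sigma_def}, apply \eqref{eq:tau_infty} to each $\boldsymbol\tau_{m-k}$, and sum the geometric series $\sum_{k\geq 0}(9/10)^k = 10$.

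\textbf{Reduction of \eqref{eq:sigma_tau}.} Set $j_0 = \lceil \log_2 \ell(Q)^{-1}\rceil$. My first goal is the $L^2$-Carleson estimate
\[
(\star)\qquad \int_{Q^\diamond}\sum_{j\geq j_0}\boldsymbol\tau_j(\mathbf x)^2\,dw(\mathbf x) \leq C w(Q)\|f\|_{{\rm BMO}(\mathbf X)}^2.
\]
Granting $(\star)$, the analogous bound for $\boldsymbol\sigma_j^2$ follows from Cauchy--Schwarz on the convex combination $\boldsymbol\sigma_j = \sum_{k\geq 0}(9/10)^k \boldsymbol\tau_{j-k}$, giving $\boldsymbol\sigma_j^2 \leq 10\sum_{k\geq 0}(9/10)^k \boldsymbol\tau_{j-k}^2$. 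Summing over $j \geq j_0$ and reindexing produces $\sum_{k\geq 0}(9/10)^k\sum_{n\geq j_0-k}\boldsymbol\tau_n^2$; the $k$ extra terms $n\in\{j_0-k,\dots,j_0-1\}$ are absorbed by the pointwise bound \eqref{eq:tau_infty} at a cost of $k\,C\|f\|^2_{{\rm BMO}(\mathbf X)} w(Q^\diamond)$, and $\sum_k (9/10)^k(k+1)<\infty$. Combining the resulting bound for $\int_{Q^\diamond}\sum_j\boldsymbol\sigma_j^2\,dw$ with $(\star)$ via $\boldsymbol\sigma_j\boldsymbol\tau_j \leq (\boldsymbol\sigma_j^2+\boldsymbol\tau_j^2)/2$ yields \eqref{eq:sigma_tau}.

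\textbf{Proof of $(\star)$.} Apply Lemma \ref{lem:sum_square} and swap the order of summation to bound the left-hand side of $(\star)$ by
\[
C\sum_{\substack{P\in\mathcal Q\\ \ell(P)\leq \ell(Q)}} \lambda_P^2 \int_{Q^\diamond}\Big(1+\frac{d(\mathbf z_P,\mathbf x)}{\ell(P)}\Big)^{-N_0}dw(\mathbf x),
\]
and split the $P$'s into two families. In the \emph{near} family, $d(\mathbf z_P,\mathbf z_Q)\leq c\ell(Q)$; I would choose a dyadic cube $\widetilde Q\supset Q$ with $\ell(\widetilde Q)$ comparable to $\ell(Q)$ so that $P^\diamond\cap \widetilde Q^\diamond\neq\emptyset$ for every such $P$. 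The inner integral is at most $C w(P)$ by \eqref{eq:finite_integral}, and \eqref{eq:sum_lambda2} applied to $\widetilde Q$ controls $\sum \lambda_P^2 w(P)\leq C w(Q)\|f\|^2_{{\rm BMO}(\mathbf X)}$. In the \emph{far} family, $d(\mathbf z_P,\mathbf z_Q)>c\ell(Q)$; the orbit distance $d(\cdot,\cdot)$ is a genuine metric (by an elementary argument using the orthogonality of reflections), so for $\mathbf x\in Q^\diamond$ the triangle inequality gives $d(\mathbf z_P,\mathbf x)\geq \tfrac12 d(\mathbf z_P,\mathbf z_Q)$ and the inner integral is at most $C w(Q)(1+d(\mathbf z_P,\mathbf z_Q)/\ell(P))^{-N_0}$. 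Bounding $\lambda_P\leq C_{10}\|f\|_{{\rm BMO}(\mathbf X)}$ and grouping far cubes in dyadic annuli $d(\mathbf z_P,\mathbf z_Q)\sim 2^s\ell(Q)$ at each scale $2^{-j}\leq \ell(Q)$, the number of cubes per annulus is $O\big((2^s\ell(Q)/2^{-j})^N\big)$, and the total far contribution becomes $C\|f\|^2_{{\rm BMO}(\mathbf X)}w(Q)\sum_{s,j}(2^{s+j}\ell(Q))^{N-N_0}$, a double geometric sum that converges thanks to $N_0>2\mathbf N\geq 2N$ from \eqref{eq:constantsN}.

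\textbf{Main obstacle.} The crux is the far family in $(\star)$: neither the Carleson estimate \eqref{eq:sum_lambda2} nor the uniform bound $\lambda_P\leq C_{10}\|f\|_{{\rm BMO}(\mathbf X)}$ alone suffices, and one has to combine both with the kernel decay to absorb the Euclidean counting of dyadic cubes in shells summed over arbitrarily fine scales $2^{-j}\to 0$. The choice $N_0>2\mathbf N$ built into \eqref{eq:constantsN} is exactly what makes this double sum convergent. The remaining steps are bookkeeping in the spirit of \cite{CHG}, the only novelty being the systematic appearance of the orbit distance $d(\cdot,\cdot)$ in place of the Euclidean norm.
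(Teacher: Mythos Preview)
Your proposal is correct. The proofs of \eqref{eq:tau_infty} and \eqref{eq:sigma_infty} match the paper exactly, and your reduction of \eqref{eq:sigma_tau} to the Carleson estimate $(\star)$ is equivalent to the paper's (the paper expands $\boldsymbol\sigma_j=\sum_n(9/10)^n\boldsymbol\tau_{j-n}$ directly inside $\int\sum_j\boldsymbol\tau_j\boldsymbol\sigma_j$ and bounds the cross terms $\int_{Q^\diamond}\sum_{j\ge\ell}\boldsymbol\tau_j\boldsymbol\tau_{j-n}$ by Cauchy--Schwarz and \eqref{eq:tautau}, picking up a factor $(1+n)^{1/2}$ rather than your $(1+k)$; both are killed by $(9/10)^n$).

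The one genuine difference is in the proof of $(\star)$ itself. You split the cubes $P$ into near and far families, invoking \eqref{eq:sum_lambda2} only for the near ones and handling the far ones by the crude bound $\lambda_P\le C_{10}\|f\|_{\rm BMO(\mathbf X)}$ plus a cube count in orbit annuli. The paper avoids the split: it factors the kernel $(1+d(\mathbf z_P,\mathbf x)/\ell(P))^{-N_0}$ as a product of two $-N_0/2$ powers, uses one factor together with the triangle inequality for $d$ to pull out a weight $(1+d(\mathbf z_Q,\mathbf z_{Q'})/\ell(Q))^{-N_0/2}$ depending only on the ancestor $Q'\in\mathcal Q_{\ell(Q)}$ of $P$, integrates the other factor to get $w(P)$, and then applies \eqref{eq:sum_lambda2} to \emph{every} $Q'$ before summing over $Q'$ with the remaining decay. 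The paper's route is a bit shorter and uses \eqref{eq:sum_lambda2} uniformly rather than only locally; your route is more hands-on but entirely valid (and in fact only needs $N_0>N$, not $N_0>2\mathbf N$, for the double geometric sum).
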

\begin{proof} {The inequality~\eqref{eq:lambda_bound} combined with~\eqref{eq:PQ_2} and the definition of $\boldsymbol \tau_j$ imply~\eqref{eq:tau_infty}.} Further, to prove~\eqref{eq:sigma_infty}  we use~\eqref{eq:sigma_def} and  ~\eqref{eq:tau_infty} and  we get  
   \begin{equation*}
       \begin{split}
           \boldsymbol\sigma_m(\mathbf x)\lesssim \sum_{j\leq m} \Big(\frac{9}{10}\Big)^{m-j}\| f\|_{{\rm BMO}(\mathbf X)}\lesssim\| f\|_{{\rm BMO}(\mathbf X)}.
       \end{split}
   \end{equation*}
 {We now turn to prove~\eqref{eq:sigma_tau}. We adapt  the proof of Christ-Geller \cite{CHG} p. 558. 
  Fix $Q\in\mathcal Q$. Let $\ell\in\mathbb Z$ be such that $\ell (Q)=2^{-\ell} $. For $P\in\mathcal Q$ such that $\ell(P)\leq \ell (Q)$, let $\tilde Q(P)\in\mathcal Q_{2^{-\ell}}$ be a unique cube which contains $P$, that is, $P\subset \tilde Q(P)$.  
  If $\mathbf x\in {  Q^{\diamond}}$, then by  the triangle inequality for the distance $d(\mathbf x,\mathbf y)$, we have 
   $$\Big(1+\frac{d(\mathbf z_P,\mathbf x)}{\ell (P)}\Big)^{-1} \leq C \Big(1+\frac{d(\mathbf z_Q,\mathbf z_{\tilde Q(P)})}{\ell (Q)}\Big)^{-1}. $$
 Hence,  applying Lemma \ref{lem:sum_square} and using the inequality 
 $$ \int_{\mathbb{R}^N} \Big(1+\frac{d(\mathbf z_P, \mathbf x)}{\ell (P)}\Big)^{-N_0/2}\, dw(\mathbf x) \leq C w(P), $$
we get 
   \begin{equation*}
   \begin{split}
      & \int_{Q^{\diamond} } \sum_{j= \ell}^{\infty} 
      \boldsymbol\tau_j(\mathbf x)^2\, dw(\mathbf x)\\
      &\lesssim \int_{Q^{\diamond}} \sum_{j= \ell}^\infty  \sum_{P\in \mathcal Q_{2^{-j}}}  \lambda_P^2  \Big(1+\frac{d(\mathbf z_Q, \mathbf z_{\tilde Q(P)})}{\ell (Q)}\Big)^{-N_0/2}\Big(1+\frac{d(\mathbf z_P, \mathbf x)}{\ell (P)}\Big)^{-N_0/2}\, dw(\mathbf x)\\
      &\lesssim \sum_{j= \ell}^\infty \sum_{P\in \mathcal Q_{2^{-j}}}  \lambda_P^2 w(P)  \Big(1+\frac{d(\mathbf z_Q, \mathbf z_{\tilde Q(P)})}{\ell (Q)}\Big)^{-N_0/2}\\
     & = \sum_{Q'\in\mathcal Q_{2^{-\ell}}} \sum_{P\in\mathcal Q, \, P\subseteq Q'} \lambda_P^2 w(P)  \Big(1+\frac{d(\mathbf z_Q, \mathbf z_{ Q'})}{\ell (Q)}\Big)^{-N_0/2}.\\
      \end{split}
      \end{equation*}
 Now we utilize ~\eqref{eq:sum_lambda2} and obtain  
 \begin{equation}\label{eq:tau_sum3}
   \begin{split}
       \int_{Q^{\diamond}} \sum_{j=\ell}^\infty \boldsymbol\tau_j(\mathbf x)^2\, dw(\mathbf x) &\lesssim \sum_{Q'\in\mathcal Q_{2^{-\ell}}}  \Big(1+\frac{d(\mathbf z_Q, \mathbf z_{Q'})}{\ell (Q')}\Big)^{-N_0/2} w(Q')\| f\|_{{\rm BMO}(\mathbf{X})}^2\\ 
       &\lesssim \| f\|^2_{{{\rm BMO}(\mathbf X)}} \int_{\mathbb R^N} \Big(1+\frac{d(\mathbf z_Q, \mathbf x)}{\ell (Q)}\Big)^{-N_0/2}\, dw(\mathbf x)\\
       &\lesssim w(Q)\| f\|^2_{{{\rm BMO}(\mathbf X)}}.
       \end{split}
   \end{equation}

 }

   If $n\geq 0$, then by~\eqref{eq:tau_sum3} and~\eqref{eq:tau_infty}, we get  
   \begin{equation}\begin{split}\label{eq:tautau} \int_{Q^{\diamond}}\sum_{j= \ell -n}^{\infty} \boldsymbol\tau ^2_{j}(\mathbf x)\, dw (\mathbf x)
   & \leq \int_{Q^{\diamond}}\sum_{j=\ell}^{\infty} \boldsymbol\tau_{j}^2(\mathbf x)\, dw(\mathbf x)+ \int_{{Q^{\diamond}}}\sum_{j= \ell -n}^{\ell -1} \boldsymbol\tau_{j}^2(\mathbf x)\, dw(\mathbf x)\\
   & \leq Cw(Q)(1+n)\|f\|^2_{{\rm BMO}(\mathbf X)}.
   \end{split} \end{equation} 
Thus, using the Cauchy-Schwarz inequality and then~\eqref{eq:tau_sum3}  and~\eqref{eq:tautau}, we obtain 
 $$\int_{Q^{\diamond}}\sum_{j= \ell}^{\infty}  \boldsymbol\tau_{j}(\mathbf x)\boldsymbol\tau_{j-n}(\mathbf x)\, dw (\mathbf x)\leq C w(Q)(1+n)^{1/2} \| f\|_{{\rm BMO}(\mathbf X)}^2.$$ 
 Finally, 
 \begin{equation*}
     \begin{split}
         \int_{Q^{\diamond}} \sum_{j=\ell }^{\infty} \boldsymbol\tau_j(\mathbf x)\boldsymbol\sigma_j(\mathbf x)\, dw(\mathbf x)
         &= \int_{Q^{\diamond}} \sum_{j= \ell }^{\infty} \sum_{n=0}^{\infty} \boldsymbol\tau_j({\mathbf{x}}) \Big(\frac{9}{10}\Big)^n\boldsymbol\tau_{j-n}(\mathbf x)\, dw(\mathbf x)\\
         &\leq C\sum_{m\geq 0} \Big(\frac{9}{10}\Big)^n w(Q)(1+n)^{1/2}\| f\|_{{\rm BMO}(\mathbf X)}^2\\
         &\leq C' w(Q)\| f\|_{{\rm BMO}(\mathbf X)}^2. 
     \end{split}
 \end{equation*}
The proof of Lemma \ref{lem:Ch-Gell12} is complete  by taking as $C_{12}$ the largest constant in the proved inequalities. \end{proof}
  \begin{lemma}[cf. \cite{CHG}, Lemma 3.4]\label{lem:Ch-Gell}
     There is a constant $C>0$ such that for any function   $f\in {\rm BMO}(\mathbf X)$ which is  supported in a ball  $B(\mathbf{x}_0,r)$, $r <2^{-j}$,  one has 
      \begin{equation*}
          \Big|\sum_{Q\in\mathcal Q_{2^{-j}}}\lambda_Q a_Q({\mathbf{x}})\Big|\leq C \frac{r^{{N}}}{2^{-j{N}}}\| f\|_{{\rm BMO}(\mathbf X)}.
      \end{equation*}
 \end{lemma}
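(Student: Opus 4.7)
The plan is to exploit the reproducing identity already established in~\eqref{eq:decomp_f_Q}, namely
\begin{equation*}
\sum_{Q\in\mathcal{Q}_{2^{-j}}} \lambda_Q\, a_Q(\mathbf x) \;=\; \int_{2^{-j}}^{2^{-j+1}} ({\boldsymbol \phi}_t^{*4}*f)(\mathbf x)\,\frac{dt}{t},
\end{equation*}
which reduces the claim to a pointwise bound on $({\boldsymbol \phi}_t^{*4}*f)(\mathbf x)$ that is uniform in $t\in[2^{-j},2^{-j+1}]$. Since ${\boldsymbol \phi}\in C_c^\infty(B(0,1/4))$, the function $\varphi:={\boldsymbol \phi}^{*4}$ lies in $C_c^\infty(B(0,1))$, so Theorem~\ref{teo:translations_kernels}\eqref{translations_kernels:a} (with $I=J=0$ and $\kappa$ sufficiently large) delivers the kernel size estimate
\begin{equation*}
|{\boldsymbol \phi}_t^{*4}(\mathbf x,\mathbf y)| \;\leq\; C\, w(B(\mathbf x,t))^{-1}\,\chi_{[0,t]}(d(\mathbf x,\mathbf y)).
\end{equation*}

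Next, I would exploit that $f$ is supported in $B(\mathbf x_0,r)$ with $r<2^{-j}\leq t$. The support constraint $d(\mathbf x,\mathbf y)\leq t$ combined with $\mathbf y\in B(\mathbf x_0,r)$ forces $d(\mathbf x,\mathbf x_0)\leq t+r\leq 2t$, so $({\boldsymbol \phi}_t^{*4}*f)(\mathbf x)$ vanishes identically outside $\mathcal O(B(\mathbf x_0,2t))$ and the bound is trivial there. On this set, choosing $\sigma\in G$ with $\|\sigma\mathbf x-\mathbf x_0\|\leq 2t$ and combining $G$-invariance with the doubling property~\eqref{eq:doubling} of $dw$ yields $w(B(\mathbf x,t))=w(B(\sigma\mathbf x,t))\sim w(B(\mathbf x_0,t))$. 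The $L^1$ estimate for compactly supported BMO functions from Lemma~\ref{lem:BMO_compactly_supported} (applied with $p=1$) gives $\|f\|_{L^1(dw)}\leq C\,w(B(\mathbf x_0,r))\|f\|_{{\rm BMO}(\mathbf X)}$, so
\begin{equation*}
|({\boldsymbol \phi}_t^{*4}*f)(\mathbf x)| \;\leq\; \frac{C\|f\|_{L^1(dw)}}{w(B(\mathbf x_0,t))} \;\leq\; C\,\frac{w(B(\mathbf x_0,r))}{w(B(\mathbf x_0,t))}\,\|f\|_{{\rm BMO}(\mathbf X)} \;\leq\; C\Big(\frac{r}{t}\Big)^{N}\|f\|_{{\rm BMO}(\mathbf X)},
\end{equation*}
where the final step is the lower-growth half of~\eqref{eq:growth}, valid because $r\leq t$.

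Finally, integrating against $dt/t$ over $[2^{-j},2^{-j+1}]$ costs only a factor of $\log 2$, and $t\sim 2^{-j}$ throughout the range, producing the claimed bound $C\,r^{N}/(2^{-j})^{N}\,\|f\|_{{\rm BMO}(\mathbf X)}$. I do not anticipate a genuine obstacle: the argument is a clean combination of the kernel size bound, the $G$-doubling geometry, and the BMO-to-$L^1$ control from Lemma~\ref{lem:BMO_compactly_supported}. The only minor care needed is the geometric propagation of support through the orbit distance $d$ and the comparability $w(B(\mathbf x,t))\sim w(B(\mathbf x_0,t))$ on the effective support of ${\boldsymbol \phi}_t^{*4}*f$.
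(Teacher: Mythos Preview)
Your argument is correct and uses the same core ingredients as the paper --- the kernel size bound from Theorem~\ref{teo:translations_kernels}\eqref{translations_kernels:a}, the $L^1$ control from Lemma~\ref{lem:BMO_compactly_supported}, and the lower volume growth in~\eqref{eq:growth} --- but organizes them differently. The paper does not recombine the sum into $\int_{2^{-j}}^{2^{-j+1}}({\boldsymbol\phi}_t^{*4}*f)\,dt/t$; instead it bounds each coefficient $\lambda_Q$ individually by estimating $|{\boldsymbol\phi}_t^{*2}(\mathbf y,\cdot)|\leq C\,w(B(\mathbf x_0,\ell(Q)))^{-1}$ on the support of $f$, then invokes the uniform bound $|a_Q|\leq C$ from~\eqref{eq:a_Qbound} together with the bounded overlap of the sets $Q^\diamond$. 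Your route is cleaner for this particular statement, since it sidesteps both the cube-by-cube coefficient estimate and the overlap argument; the paper's route, on the other hand, yields the intermediate information $\lambda_Q\leq C(r/2^{-j})^{N}\|f\|_{{\rm BMO}(\mathbf X)}$ for each relevant cube, which is not needed here but is in the spirit of the Chang--Fefferman bookkeeping used elsewhere in the paper.
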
 
 \begin{proof}  
By Lemma~\ref{lem:BMO_compactly_supported},  we have
\begin{equation}\label{eq:(4.17)_for_L1}
        \|f\|_{L^1(dw)} \leq Cw(B(\mathbf{x}_0,r))\| f\|_{{\rm BMO}(\mathbf X)}.
    \end{equation}

 Recall that $\supp {\boldsymbol \phi}_t^{*2}(\mathbf{y},\cdot) \subseteq \mathcal{O}(B(\mathbf{y},t/2))$ (see~\eqref{eq:supp_transl}).  Thus  for $Q\in\mathcal Q_{2^{-j}}$, 
      \begin{equation}\label{eq:lambda_1}
      \begin{split}
      \lambda_{Q} &=\Big(w(Q)^{-1}\int_{2^{-j}}^{2^{-j+1}}\int_{Q} \left|\int_{\mathcal{O}(B(\mathbf{y},t/2)) \cap B(\mathbf{x}_0,r)}{\boldsymbol \phi}_t^{*2}(\mathbf{y},\mathbf{x})f(\mathbf{x})\,dw(\mathbf{x})\right|^2\, dw(\mathbf{y})\frac{dt}{t}\Big)^{1/2}
      \end{split}
 \end{equation} 
(see~\eqref{eq:lambda} for the definition of $\lambda_Q$).  Note also that $\lambda_Q=0$ if  $Q^{\diamond} \cap B(\mathbf{x}_0,r)=\emptyset$, so we can assume that $Q^{\diamond} \cap B(\mathbf{x}_0,r) \neq \emptyset$. {Thus, in this case, thanks to~\eqref{eq:growth}  and the assumption $r\leq \ell(Q)$,} for all  $\ell (Q)\leq t\leq 2\ell (Q)$, and $\mathbf y\in Q$,  we have 
$$w(B(\mathbf{y},t/2)) \sim w(B(\mathbf{x}_0,\ell(Q)))\sim w(Q).$$ 
     Consequently, by part~\eqref{translations_kernels:a} of Theorem~\ref{teo:translations_kernels}, for 
     $ \mathbf y\in Q$, $\mathbf x\in Q^{\diamond} \cap B(\mathbf{x}_0,r)$,  we get   
     \begin{equation*}
      |{\boldsymbol \phi}_t^{*2}(\mathbf{y},\mathbf{x})| \leq \frac{C}{w(B(\mathbf{y},4t))} \leq \frac{C}{w(B(\mathbf{x}_0,\ell(Q)))} .  
     \end{equation*}
     Therefore, from~\eqref{eq:lambda_1},~\eqref{eq:(4.17)_for_L1},  and~\eqref{eq:growth}, we conclude that 
     \begin{align*}
         \lambda_Q  &\leq C w(B(\mathbf x_0, 2^{-j}))^{-1} \| f\|_{L^1(dw)}\leq C \frac{w(B(\mathbf{x}_0,r))}{w(B(\mathbf{x}_0,\ell(Q)))}\|f\|_{{\rm BMO}(\mathbf X)}  \leq C\frac{r^{{N}}}{2^{-j{N}}}\|f\|_{{\rm BMO}(\mathbf X)}.
     \end{align*}
   { Recall that $|a_Q(\mathbf x)|\leq C$ (see~\eqref{eq:a_Qbound})}. Since $\text{supp}\, a_Q\subseteq Q^{\diamond}$ and the sets $Q^{\diamond}$, where $Q$ runs over  $\mathcal Q_{2^{-j}}$,  have bounded overlapping property with an overlapping constant independent of $j$, we obtain the lemma. 
 \end{proof}


 \section{Constructive Fefferman-Stein decomposition of \texorpdfstring{${\rm BMO}(\mathbf X$)}{BMO(X)}  functions} 

This section is devoted for proving Theorem \ref{UchiyamaDecomposition0}. 
 The main step of the proof  is the following theorem.
\begin{theorem}\label{KeyTheorem}
 Assume that $\overrightarrow S=( \delta_0, \boldsymbol S^{\{1\}},\boldsymbol S^{\{2\}},...,\boldsymbol S^{\{d\}})$ is a system of regular kernels of order 0 satisfying ~\eqref{eq:triangle}.
 Then there are constants  $A_0, A\geq 1$ and $  0<\varepsilon_0 <1$ such that {for all $r>0$}, any   ${\rm BMO}(\mathbf X)$-function $f$
 supported in  the ball $B(0, r)$ with  $\| f\|_{{\rm BMO}(\mathbf X)}=\varepsilon<\varepsilon_0$,  can be decomposed into 
\begin{equation}\label{eq:decom_need} f=\Big(\sum_{j=1}^d \mathbf S^{\{j\}*} \widetilde g_j \Big)+\widetilde g_0+ f_1,
\end{equation}
{ 
\begin{equation}\label{eq:g_infty}
\sum_{j=0}^d \| \widetilde g_j\|_{L^\infty} \leq { 2{ \sqrt{d+1}}+A\| f\|_{{\rm BMO}(\mathbf X)} }, 
\end{equation} }
\begin{equation}\label{eq:f_1BMO}
\| f_1\|_{{\rm BMO}}\leq A \varepsilon^2, \ \ \text{\rm supp}\, f_1\subseteq B(0,A_0 r),
\end{equation}
 Moreover,
\begin{equation}\label{eq:g_L2}
\sum_{j=0}^d \| \widetilde g_j\|_{L^2(dw)}\leq A w( B(0,r))^{1\slash 2} \| f\|_{{\rm BMO}(\mathbf X)}.
\end{equation}
\end{theorem}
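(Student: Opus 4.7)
The plan is to adapt Uchiyama's iteration lemma to the Dunkl setting, using the Chang--Fefferman decomposition of Section~\ref{sec:Chang}, Lemma~\ref{cor:b_Q_ort}, and the auxiliary functions $\boldsymbol{\tau}_j$ and $\boldsymbol{\sigma}_j$ of Section~\ref{sec:auxiliary}. First, I would apply the Chang--Fefferman decomposition to $f$, writing $f=\sum_{j\in\mathbb{Z}}\sum_{Q\in\mathcal{Q}_{2^{-j}}}\lambda_Q a_Q$ in $L^2(dw)$. For each cube $Q$, Lemma~\ref{cor:b_Q_ort} produces a vector-valued antiderivative $\overrightarrow{b_Q}$ satisfying $\overrightarrow{S^*}\circ \overrightarrow{b_Q}=a_Q$ and $\langle V(\overrightarrow{b_Q}),V(\overrightarrow{\boldsymbol\nu}_Q)\rangle_{\mathbb{R}^{2d}}=0$ for a unit vector $\overrightarrow{\boldsymbol\nu}_Q\in\mathbb{C}^d$ that remains to be chosen. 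I would pick $\overrightarrow{\boldsymbol\nu}_Q$ inductively from large to small scales: setting $\overrightarrow{g}_{<j}:=\sum_{\ell(P)>2^{-j}}\lambda_P \overrightarrow{b_P}$, take $V(\overrightarrow{\boldsymbol\nu}_Q)$ parallel to a suitable mollification of $V(\overrightarrow{g}_{<j}(\mathbf{z}_Q))$, provided this vector has norm above a small threshold; otherwise $\overrightarrow{\boldsymbol\nu}_Q$ is arbitrary. Setting $\overrightarrow{g}:=\sum_Q \lambda_Q \overrightarrow{b_Q}$, one has $\overrightarrow{S^*}\circ \overrightarrow{g}=f$ in $L^2(dw)$ by~\eqref{eq:b_Q_id}, while the $L^2$ bound~\eqref{eq:g_L2} follows from~\eqref{eq:subcollection} combined with the $L^2$-boundedness of the multipliers with symbols $\Theta_j(\xi,\overrightarrow{\boldsymbol\nu})$ from Proposition~\ref{propo:real_complex}.

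The main work is to establish a pointwise sup-norm bound on $\overrightarrow{g}$. Writing $\overrightarrow{g}_m:=\sum_{\ell(Q)\geq 2^{-m}}\lambda_Q \overrightarrow{b_Q}$, I would prove the telescoping inequality
\begin{equation*}
\|V(\overrightarrow{g}_m(\mathbf{x}))\|^2\leq \|V(\overrightarrow{g}_{m-1}(\mathbf{x}))\|^2+C\,\boldsymbol{\tau}_m(\mathbf{x})\boldsymbol{\sigma}_{m-1}(\mathbf{x})+C\,\boldsymbol{\tau}_m(\mathbf{x})^2.
\end{equation*}
The crucial cancellation in the cross term $2\sum_{Q\in\mathcal{Q}_{2^{-m}}}\lambda_Q\langle V(\overrightarrow{b_Q}(\mathbf{x})),V(\overrightarrow{g}_{m-1}(\mathbf{x}))\rangle$ comes from the approximation $V(\overrightarrow{g}_{m-1}(\mathbf{x}))\approx \|V(\overrightarrow{g}_{m-1}(\mathbf{z}_Q))\|\,V(\overrightarrow{\boldsymbol\nu}_Q)$: the leading term vanishes by~\eqref{eq:b_Q_id}, and the remainder is controlled by combining the Lipschitz estimate for $\overrightarrow{b_Q}$ from Lemma~\ref{cor:b_Q_ort} with the Lipschitz property of $\mathbf{v}\mapsto \mathbf{v}/\|\mathbf{v}\|$ off the small-norm region. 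Summing the telescoping inequality over $m$ at a fixed $\mathbf{x}$ and then averaging over a dyadic cube containing $\mathbf{x}$, the Carleson-type estimate~\eqref{eq:sigma_tau} yields $\sum_m \boldsymbol{\tau}_m(\mathbf{x})\boldsymbol{\sigma}_{m-1}(\mathbf{x})\lesssim \|f\|_{{\rm BMO}(\mathbf{X})}^2$, so that $\|V(\overrightarrow{g})\|_\infty^2$ is bounded by a universal constant plus $C\varepsilon^2$, which yields~\eqref{eq:g_infty} after Cauchy--Schwarz across the $d$ components.

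To produce the compactly supported decomposition, let $\chi$ be a smooth radial cutoff with $\chi\equiv 1$ on $B(0,2r)$ and $\mathrm{supp}\,\chi\subseteq B(0,A_0 r)$, where $A_0$ is a sufficiently large absolute constant. Define $\widetilde{g}_j:=\chi g_j$ for $j=1,\dots,d$, and absorb the part of $\sum_{j=1}^d \mathbf{S}^{\{j\}*}((1-\chi)g_j)$ lying in $B(0,A_0 r)$ into $\widetilde{g}_0$, letting the complementary part define $f_1$. The support condition in~\eqref{eq:f_1BMO} is then by construction, and the sup-norm bounds~\eqref{eq:g_infty} follow from the preceding estimate for $\overrightarrow{g}$. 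The quadratic BMO bound $\|f_1\|_{{\rm BMO}}\leq A\varepsilon^2$ is where the gain in smallness arises: the contribution of $\lambda_Q \overrightarrow{b_Q}$ to $\overrightarrow{g}$ outside $B(0,2r)$ is quadratically small, since cubes with $\ell(Q)>r$ satisfy $\lambda_Q\lesssim \varepsilon\,(r/\ell(Q))^N$ by Lemma~\ref{lem:Ch-Gell}, while for cubes with $\ell(Q)\leq r$ the decay tails of $\overrightarrow{b_Q}$ in~\eqref{eq:b_Q_bound} supply the second factor of $\varepsilon$ after integration against a test atom.

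The hardest step will be the sup-norm estimate on $\overrightarrow{g}$, specifically controlling the cross-term remainder in the telescoping inequality. Unlike in Uchiyama's Euclidean argument, Dunkl translations $\tau_{\mathbf{x}}$ respect supports only under the orbit distance $d(\mathbf{x},\mathbf{y})$ and not under Euclidean distance; closing the iteration therefore requires the refined estimates of Proposition~\ref{prop:action} and Lemma~\ref{cor:b_Q_ort}, which combine fast decay in $d(\mathbf{x},\mathbf{z}_Q)$ with a weaker but essential nonzero decay in $\|\mathbf{x}-\mathbf{z}_Q\|$. Without this Euclidean factor, the Carleson bound~\eqref{eq:sigma_tau} alone would be insufficient to kill the cross term.
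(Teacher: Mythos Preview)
Your outline misses the central mechanism of Uchiyama's argument: the \emph{renormalization} of the partial sums onto the unit sphere. You define $\overrightarrow{g}_m=\sum_{\ell(Q)\geq 2^{-m}}\lambda_Q\overrightarrow{b_Q}$ and attempt to bound $\|\overrightarrow{g}_m\|_\infty$ via the telescoping inequality together with the Carleson estimate~\eqref{eq:sigma_tau}. But~\eqref{eq:sigma_tau} is an \emph{integral} bound, $\int_{Q^\diamond}\sum_j\boldsymbol{\tau}_j\boldsymbol{\sigma}_j\,dw\le C_{12}w(Q)\varepsilon^2$; it does not give $\sum_j\boldsymbol{\tau}_j(\mathbf x)\boldsymbol{\sigma}_j(\mathbf x)\lesssim\varepsilon^2$ pointwise, and in fact this pointwise sum can be infinite on a null set for generic $f\in{\rm BMO}(\mathbf X)$. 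Averaging over a cube containing $\mathbf x$, as you suggest, only recovers $L^1_{\rm loc}$ control and hence at best a ${\rm BMO}$ bound on $\|\overrightarrow{g}\|^2$, not an $L^\infty$ bound. There is also no source for the ``universal constant'' you mention: your $\overrightarrow{g}_m$ starts at $0$, so your telescoping would yield $\|\overrightarrow{g}\|_\infty^2\le C\sum_j\boldsymbol{\tau}_j\boldsymbol{\sigma}_j$, not a constant plus $C\varepsilon^2$.

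The paper (following Uchiyama and Christ--Geller) resolves this by never trying to bound the raw sum. Instead one sets $\overrightarrow{g_{-1}}\equiv(1,0,\dots,0)$ and, at each step, chooses $\overrightarrow{\boldsymbol\nu}_Q=\overrightarrow{g_{\ell-1}}(\mathbf z_Q)$ (always a unit vector), forms $\overrightarrow{G_\ell}=\overrightarrow{g_{\ell-1}}+\overrightarrow{h_\ell}$, and \emph{projects back}: $\overrightarrow{g_\ell}:=\overrightarrow{G_\ell}/\|\overrightarrow{G_\ell}\|$. Then $\|\overrightarrow{g_\ell}\|\equiv 1$ trivially, and the renormalization error $\overrightarrow{E_\ell}:=\overrightarrow{g_\ell}-\overrightarrow{G_\ell}$ satisfies $\|\overrightarrow{E_\ell}(\mathbf x)\|\le A_2\boldsymbol{\tau}_\ell(\mathbf x)\boldsymbol{\sigma}_\ell(\mathbf x)$ pointwise. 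It is \emph{here} that the Carleson estimate~\eqref{eq:sigma_tau} is used, together with the Lipschitz bound~\eqref{c13}, to show that $\overrightarrow{E^0}:=\sum_\ell\overrightarrow{E_\ell}$ lies in ${\rm BMO}(\mathbf X)$ with norm $\le A_4\varepsilon^2$. The function $f_1$ is then $-\big(\overrightarrow{S^*}\circ(\overrightarrow{E^0}\chi_{B(0,A_0/2)})\big)\chi_{B(0,A_0)}$, and the quadratic gain $\varepsilon^2$ comes directly from the product structure $\boldsymbol{\tau}_\ell\boldsymbol{\sigma}_\ell$, not from tail decay of $\overrightarrow{b_Q}$ as you propose (your argument there produces only one factor of $\varepsilon$). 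A smaller issue: the $L^2$ bound on $\sum_\ell\overrightarrow{h_\ell}$ does not follow from~\eqref{eq:subcollection} plus multiplier boundedness, since the symbol $\Theta_j(\xi,\overrightarrow{\boldsymbol\nu}_Q)$ varies with $Q$; the paper handles this by an almost-orthogonality argument based on~\eqref{eq:b_Q_int0} and the decay~\eqref{c8}.
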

\subsection{Proof of Theorem \ref{KeyTheorem}}
\begin{proof}

    The proof follows the ideas of Uchiyama~\cite{JFAA} and Christ and Geller~\cite{CHG}. 

   {For  $\overrightarrow S$, let $C_{11}$ be the constant from Lemma \ref{cor:b_Q_ort}. We shall prove Theorem \ref{KeyTheorem} with 
   \begin{equation*}
       A_0=32\sqrt{N} \quad \text{ and } \varepsilon_0=(100 C_{11}C_{12})^{-1}, 
   \end{equation*}
     where $C_{12}$ is the constant from Lemma \ref{lem:Ch-Gell12}. } {Recall that we fixed constants   $N_0>2\mathbf N$ and    $N_0+1< N_1 \leq (4M_1+N-1)/2$ (see~ \eqref{eq:M_1},  \eqref{eq:constantsN}, and  Section \ref{sec:auxiliary}).}

We may assume without loss of generality that $f$ is supported in $B(0,1)$, because the dilations $f^{r}(\mathbf x)=f(r\mathbf x)$ are isometries on ${\rm BMO}(\mathbf X)$ and $L^\infty(\mathbb R^N)$, 
$$ \| f\|_{L^2(dw)}^2= r^{\mathbf N}\| f^{r}\|_{L^2(dw)}^2 =\frac{w(B(0,r))}{w(B(0,1))} \| f^{r}\|_{L^2(dw)}^2,$$ (see~\eqref{eq:measure}), and $\mathbf S (f^{r})=(\mathbf Sf)^{r}$ for any regular kernel $\boldsymbol S$ of order zero. 

By the Chang--Fefferman decomposition given in Section \ref{sec:Chang}, we have
\begin{equation}\label{eq:f_deco_1} f=\sum_{Q \in \mathcal{Q}}\lambda_Q a_Q=\sum_{Q\in\mathcal Q,\, \ell  (Q)\leq 1} \lambda_Q a_Q+\sum_{Q\in\mathcal Q, \, \ell (Q)> 1} \lambda_Q a_Q=:f_0+g_0.
\end{equation}

For $\ell \in \mathbb{Z}$, let  $\boldsymbol \tau_\ell (\mathbf x)=\boldsymbol \tau_{\ell} (f)(\mathbf x)$ and $\boldsymbol \sigma_\ell (\mathbf x)=\boldsymbol \sigma_\ell (f)(\mathbf x)$ be the auxiliary functions associated with the decomposition (see ~\eqref{eq:tau_def} and~\eqref{eq:sigma_def}). 

It follows from Lemma \ref{lem:Ch-Gell}, {Lemma \ref{lem:BMO_compactly_supported}, and~\eqref{eq:subcollection}}   that
\begin{equation}\label{eq:g_0infty} \| g_0\|_{L^\infty} \leq C_{13}\| f\|_{{\rm BMO}(\mathbf X)}, \quad {\| g_0\|_{L^2(dw)}\leq C_{13} w(B(0,1))^{1/2} \| f\|_{{\rm BMO}(\mathbf X)}.}
\end{equation}
Thus, in farther consideration we shall deal with  the function
$f_0=\sum_{Q\in\mathcal Q, \, \ell (Q)\leq 1} \lambda_Qa_Q$.

For $\ell=-1$ we  define $\overrightarrow{g_{-1}}(\mathbf x)\equiv (1,0,...,0)\in { \mathbb  C^{d+1}}$, $\overrightarrow{E_{-1}}(\mathbf x)=\overrightarrow{h_{-1}}(\mathbf x)\equiv (0,...,0)\in { \mathbb C^{d+1}}$. 
Following~\cite{CHG}, our task is to construct, by induction, for each integer $\ell\geq 0$ functions $\overrightarrow{h_\ell}$, $\overrightarrow{g_\ell}$, and $\overrightarrow{E_\ell}$ on $\mathbb R^N$, taking values in { $\mathbb C^{d+1}$}, such that
\begin{equation}\label{c1}
\overrightarrow{S^*}\circ \overrightarrow{h_\ell} =\sum_{Q\in\mathcal Q_{2^{-\ell}}} \lambda_Qa_Q,
\end{equation}
\begin{equation}\label{c7}
\overrightarrow{h_\ell}=\sum_{Q\in \mathcal Q_{2^{-\ell}} }\lambda_Q\overrightarrow{b_{Q}}, 
\end{equation}
 with some functions $\overrightarrow{b_Q}$ which take values in { $\mathbb C^{d+1}$ } and satisfy 
\begin{equation}\label{c8}
\|\overrightarrow{b_Q}(\mathbf x)\|\leq C_{11} \Big(1+\frac{\|\mathbf x-\mathbf z_Q\|}{\ell (Q)}\Big)^{-1}\Big(1+\frac{d(\mathbf z_Q, \mathbf x)}{\ell(Q)}\Big)^{- 2N_1} \quad \text{ for all } \ \mathbf x\in\mathbb R^N,
\end{equation}
\begin{equation}\label{c9}
\|\overrightarrow{b_Q}(\mathbf x)-\overrightarrow{b_Q}(\mathbf y)\|\leq C_{11} \frac{\|\mathbf x-\mathbf y\|}{\ell (Q)} \Big(1+
\frac{d(\mathbf x,\mathbf y)}{\ell (Q)}\Big)^{N_1-1}\Big(1+\frac{\|\mathbf x-\mathbf z_Q\|}{\ell (Q)}\Big)^{-1} \Big(1+\frac{d(\mathbf z_Q, \mathbf x)}{\ell (Q)}\Big)^{-N_1+1} \ \ \text{\rm for all }
\mathbf x,\mathbf y\in\mathbb R^N,
\end{equation} 
\begin{equation}\label{c9'}
\|\overrightarrow{b_Q}(\mathbf x)-\overrightarrow{b_Q}(\mathbf y)\|\leq C_{11} \frac{\|\mathbf x-\mathbf y\|}{\ell (Q)} \Big(1+\frac{\| \mathbf x-\mathbf z_Q\|}{\ell (Q)}\Big)^{-1}\Big(1+\frac{d(\mathbf z_Q, \mathbf x)}{\ell (Q)}\Big)^{-2N_1+1} \ \ \text{\rm for }
\|\mathbf x-\mathbf y\|\leq A_0\ell (Q),
\end{equation}
\begin{equation}\label{c10}
\langle V( \overrightarrow{b_Q}(\mathbf x)), V(\overrightarrow{g_{\ell-1}}(\mathbf z_Q))\rangle_{ \mathbb R^{2d+2}}\equiv 0 \ \ \text{\rm for all } Q\in\mathcal Q_{2^{-\ell}},
\end{equation}
\begin{equation}\label{eq:b_Q_int0}
    \int_{\mathbb{R}^N} \overrightarrow{b_Q}(\mathbf x)\, dw(\mathbf x)=\overrightarrow{0},
\end{equation}
\begin{equation}\label{c2}
\|\overrightarrow{g_\ell}(\mathbf x)\|= 1 \text{ for all }\mathbf{x} \in \mathbb{R}^N,
\end{equation}
\begin{equation}\label{c3}
\overrightarrow{g_\ell}=\overrightarrow{g_{\ell-1}}+\overrightarrow{h_\ell}+\overrightarrow{E_\ell} \ \ \text{ for all }\ \ell\geq 0,
\end{equation} 
\begin{equation}\label{c11}
\|\overrightarrow{g_\ell}(\mathbf x)-\overrightarrow{g_\ell}(\mathbf y)\|
\leq A_1 \frac{\|\mathbf x-\mathbf y\|}{2^{-\ell}} \Big(1+
\frac{d(\mathbf x,\mathbf y)}{2^{-\ell}}\Big)^{N_1-1}\boldsymbol\sigma_\ell(\mathbf x)\ \ \text{\rm for all }
\mathbf x,\mathbf y\in\mathbb R^N,
\end{equation}
\begin{equation}\label{c12}
\|\overrightarrow{E_\ell}(\mathbf x)\|\leq  A_2\boldsymbol\tau_\ell(\mathbf x)\boldsymbol\sigma_\ell(\mathbf x), \quad  
\end{equation}
\begin{equation}\label{c13}
\|\overrightarrow{E_\ell}(\mathbf x)-\overrightarrow{E_\ell}(\mathbf y)\|\leq { A_3 } \frac{\|\mathbf x-\mathbf y\|}{2^{-\ell}} \| f\|_{{\rm BMO}(\mathbf X)}^2 \ \ \text{\rm for } \|\mathbf x-\mathbf y\|\leq A_02^{-\ell},
\end{equation}

\begin{equation}\label{c5}
\sum_{\ell=0}^\infty \overrightarrow{E_\ell} \ \text{ converges in } L^1_{\rm loc}(dw) \ \text{ to } \overrightarrow{E}^0 \in {\rm BMO}(\mathbf X),
\end{equation}
\begin{equation}\label{c6}
\| \overrightarrow{E}^0\|_{{\rm BMO}(\mathbf X)} \leq {A_4}\| f\|_{{\rm BMO}(\mathbf X)}^2, 
\end{equation}
\begin{equation}\label{c4}
\{\overrightarrow{g_\ell}\}_{\ell \in \mathbb{N}} \ \text{ converges in } \ L^1_{\rm loc}(dw) \ \text{ to } \overrightarrow{g} \in L^\infty, 
\end{equation}

 where the constants $A_1,A_2,A_3,A_4$ are indicated in the Table~\ref{tab:constants} at the end of the article.

\

{In the proofs of \eqref{c1}--\eqref{c4} several inequalities with constants will occur. The constants will not  depend on $f$, provided $\|f\|_{\text{BMO}(\mathbf X)}=\varepsilon <\varepsilon_0$, $\text{supp}\, f\subset B(0,1)$.  For this purpose we will control  their dependencies. }
We shall provide the inductive step. The proof  { for  $\ell =0$ }is essentially the same as the inductive step. 

Assume that~\eqref{c1}--\eqref{c13} hold  for all $j \in \mathbb{Z}$ such that $0\leq j<\ell$.

{\bf Step 1: constructions of }$\overrightarrow{h_\ell}$ and $\overrightarrow{b_Q}$, {\bf  proofs of} \eqref{c1}--\eqref{eq:b_Q_int0}.  For $Q\in\mathcal Q_{2^{-\ell}}$, we apply   Lemma \ref{cor:b_Q_ort} with $\overrightarrow{\boldsymbol \nu}=\overrightarrow{g_{\ell -1}}(\mathbf z_Q)$ and obtain { a vector-valued function }   
$\overrightarrow{b_Q}(\mathbf x)$ satisfying~\eqref{c8}--\eqref{eq:b_Q_int0}  such that $a_Q(\mathbf x)=\overrightarrow{S^*}\circ\overrightarrow{b_Q}(\mathbf x)$ (see \eqref{eq:b_Q_id}). 
{Observe that $$\#\{Q\in \mathcal Q_{2^{-\ell}} : b_Q\not\equiv 0\}\ \ \text{ is finite},$$ since ${\text{supp}}\, f\subset B(0,1)$ (see \eqref{eq:number_Q}). }
Let  $\overrightarrow{h_\ell}(\mathbf x)$ be given by~\eqref{c7}. Then {$\overrightarrow{h_\ell}\in L^1(dw)\cap L^\infty (\mathbb R^N)$ } and  \eqref{c1} follows from \eqref{eq:b_Q_id}.
{In Step 5 of the proof we provide estimates for the $L^2(dw)$-norm of sums of $\overrightarrow{h_\ell}$'s,}

{\bf Step 2: constructions of } $\overrightarrow{g_{\ell}}$ and $\overrightarrow{E_{\ell}}$, {\bf proofs of }\eqref{c2} and \eqref{c3}. 
Thanks to~\eqref{c8} and ~\eqref{eq:tau_infty}, we have  
\begin{equation}\label{h_l}
\|\overrightarrow{h_\ell}(\mathbf x)\|\leq C_{11}\boldsymbol\tau_\ell(\mathbf x)\leq  C_{11}C_{12}\|f\|_{{\rm BMO}(\mathbf X)}= C_{11}C_{12}\varepsilon. 
\end{equation}
 Define
 \begin{equation}\label{eq:defG_L} \overrightarrow{G_\ell}(\mathbf x):=\overrightarrow{g_{\ell-1}}(\mathbf x)+ \overrightarrow{h_\ell}(\mathbf x).
 \end{equation}
 Set $C_{14}= C_{11}C_{12}$.
 Since $\|\overrightarrow{g_{\ell-1}}(\mathbf x)\|=1$ { (see~\eqref{c2}),} from~\eqref{h_l} we conclude that 
 \begin{equation}\label{G1}1-C_{14}\varepsilon \leq \|\overrightarrow{G_\ell}(\mathbf x)\|\leq 1+C_{14}\varepsilon.
 \end{equation}
In other words, $\|\overrightarrow{G_\ell}(\mathbf x)\|$ is close to 1. Thanks to the orthogonality~\eqref{c10} 
the following better estimates are true:  
\begin{equation}\label{c14}
\big|1-\|\overrightarrow{G_\ell}(\mathbf x)\|\big|\leq A_2\boldsymbol\tau_\ell(\mathbf x)\boldsymbol\sigma_\ell(\mathbf x){\leq A_2C_{12}^2\varepsilon^2} \ \ \text{with }\ A_2 =\frac{20}{9} C_{11}^2A_1.
\end{equation}

{The second inequality in \eqref{c14} follows from Lemma \ref{lem:Ch-Gell12}. To see the first  one, } {having the induction hypotheses ~\eqref{c10} and \eqref{c11} in mind,} and using  ~\eqref{c8}, we have  
\begin{equation*}\begin{split}
\big| & \langle V( \overrightarrow{h_{\ell}}(\mathbf x))  ,V(\overrightarrow{g_{\ell -1}}(\mathbf x))\rangle_{\mathbb R^{2d+2}}\big| = \Big|\sum_{Q\in\mathcal Q_{2^{-\ell}}}  \lambda_Q \langle V( \overrightarrow{b_Q}(\mathbf x)),V(\overrightarrow{g_{\ell-1}}(\mathbf x))\rangle_{\mathbb R^{2d+2}} \Big|\\
& =\Big|\sum_{Q\in\mathcal Q_{2^{-\ell}}} \lambda_Q \langle V(\overrightarrow{b_Q}(\mathbf x)),V(\overrightarrow{g_{\ell-1}}(\mathbf x)-\overrightarrow{g_{\ell-1}}(\mathbf z_Q))\rangle_{\mathbb R^{2d+2}} \Big|\\
&\leq \sum_{Q\in\mathcal Q_{2^{-\ell}}} \lambda_Q C_{11}\Big(1+\frac{\|\mathbf x-\mathbf z_Q\|}{\ell (Q)}\Big)^{-1}\Big(1+\frac{d(\mathbf z_Q,\mathbf x)}{\ell (Q)}\Big)^{-2N_1}A_1 \frac{\|\mathbf x-\mathbf z_Q\|}{2^{-\ell+1}}\Big(1+\frac{d(\mathbf x,\mathbf z_Q)}{2^{-\ell+1}}\Big)^{N_1-1}\boldsymbol\sigma_{\ell-1}(\mathbf x)\\
&\leq C_{11}A_1\sum_{Q\in\mathcal Q_{2^{-\ell}}} \lambda_Q \Big(1+\frac{d(\mathbf x,\mathbf z_Q)}{2^{-\ell}}\Big)^{-N_1-1}\boldsymbol\sigma_{\ell-1}(\mathbf x)\\
&\leq C_{11}A_1 \boldsymbol\tau_\ell(\mathbf x)\boldsymbol\sigma_{\ell-1}(\mathbf x).
\end{split}\end{equation*}

Recall that  $\|\overrightarrow{g_{\ell-1}}(\mathbf x)\|= 1$. 
 If $0<\varepsilon<\varepsilon_0<(100C_{14})^{-1}$, then~\eqref{h_l} asserts that $  \|\overrightarrow{h_\ell}(\mathbf x)\| <10^{-2}$. Hence,  
\begin{equation*}
\begin{split}
\big|1-\|\overrightarrow{G_\ell}(\mathbf x)\|\big|&\leq \big|1-\|\overrightarrow{G_\ell}(\mathbf x)\|\big| \cdot \big|1+\|\overrightarrow{G_\ell}(\mathbf x)\|\big|\\
&\leq 2\big| \langle V(\overrightarrow{h_\ell}(\mathbf x)),V(\overrightarrow{g_{\ell-1}}(\mathbf x))\rangle_{\mathbb R^{2d+2}}\big| +\|\overrightarrow{h_\ell}(\mathbf x)\|^2\\
&\leq 2 C_{11}A_1\boldsymbol{\tau}_\ell(\mathbf x)\boldsymbol\sigma_{\ell-1}(\mathbf x) + C_{11}^2\boldsymbol{\tau}_\ell(\mathbf x)^2\\
&\leq 2 C_{11}^2A_1\boldsymbol{\tau}_\ell(\mathbf x) \big(\boldsymbol{\sigma}_{\ell-1}(\mathbf x)+\boldsymbol{\tau}_\ell(\mathbf x)\big)\\
&\leq \frac{20}{9}  C_{11}^2A_1\boldsymbol{\tau}_\ell(\mathbf x)\boldsymbol{\sigma}_{\ell}(\mathbf x),
\end{split}
\end{equation*}
where in the last inequality we have used~\eqref{eq:sigSum}. Thus~\eqref{c14} is established.

Put 
\begin{equation*}
\begin{split}
&\overrightarrow{g_{\ell}} (\mathbf x):=\frac{\overrightarrow{G_{\ell}}(\mathbf x)}{\|\overrightarrow{G_{\ell}}(\mathbf x)\|},\\
&\overrightarrow{E_{\ell}}(\mathbf x):=\overrightarrow{g_{\ell}}(\mathbf x)-\big(\overrightarrow{g_{\ell-1}}(\mathbf x)+\overrightarrow{h_{\ell}} (\mathbf x)\big)=\frac{\overrightarrow{G_{\ell}} (\mathbf x)}{\|\overrightarrow{G_{\ell}} (\mathbf x)\|}- \overrightarrow{G_{\ell}} (\mathbf x).
\end{split}\end{equation*}
{Then \eqref{c2} and \eqref{c3} hold for $\overrightarrow{g_{\ell}}$ and $\overrightarrow{E_{\ell}}$.} 

{\bf Step 3: proof of }\eqref{c11}.  Since 
\begin{equation*}
    \left\|\frac{\overrightarrow{u}}{\|\overrightarrow{u}\|}-\frac{\overrightarrow{v}}{\|\overrightarrow{v}\|}\right\| \leq \frac{1}{\|\overrightarrow{u}\|}\|\overrightarrow{u}-\overrightarrow{v}\| \quad \text{      for all      }0<\|\overrightarrow{u}\|\leq \|\overrightarrow{v}\|,
\end{equation*}
using~\eqref{G1}, we have 
\begin{equation}\label{eq:g_lLip}
\begin{split}\|\overrightarrow{g_{\ell}}(\mathbf x)-\overrightarrow{g_{\ell}}(\mathbf y)\|
&\leq (1-C_{14}\varepsilon)^{-1}\|\overrightarrow{G_{\ell}}(\mathbf x)-\overrightarrow{G_{\ell}}(\mathbf y)\|.\\
\end{split}\end{equation}
Further, by the definition of  $\overrightarrow{G_{\ell}}(\mathbf x)$ {(see \eqref{eq:defG_L})}, we get  
\begin{equation*}\begin{split}
\|\overrightarrow{G_{\ell}}(\mathbf x)-\overrightarrow{G_{\ell}}(\mathbf y)\|
&\leq
\|\overrightarrow{g_{\ell-1}}(\mathbf x) -
\overrightarrow{g_{\ell-1}}(\mathbf y)\|+
\|\overrightarrow{h_{\ell}}(\mathbf x) -
\overrightarrow{h_{\ell}}(\mathbf y)\|=:I_1+I_2.
\end{split}\end{equation*}
By the induction hypothesis~\eqref{c11}, 
\begin{equation}\begin{split}\label{eq:I1}
I_1&\leq  A_1\frac{\|\mathbf x -\mathbf y\|}{2^{-\ell +1}}\Big(1+\frac{d(\mathbf x,\mathbf y)}{2^{-\ell +1}}\Big)^{N_1-1}\boldsymbol\sigma_{\ell-1}(\mathbf x)
\leq  \frac{A_1}{2}\frac{\|\mathbf x-\mathbf y\|}{2^{-\ell}}\Big(1+\frac{d(\mathbf x,\mathbf y)}{2^{-\ell}}\Big)^{N_1-1}\boldsymbol\sigma_{\ell-1}(\mathbf x)\\
&=\frac{ 2^{-1}}{\frac{9}{10}} A_1 \frac{\|\mathbf x-\mathbf y\|}{2^{-\ell}}\Big(1+\frac{d(\mathbf x,\mathbf y)}{2^{-\ell}}\Big)^{N_1-1}\Big(\frac{9}{10}\Big)\boldsymbol\sigma_{\ell-1}(\mathbf x).
\end{split}
\end{equation}
Applying ~\eqref{c7} and ~\eqref{c9}, we obtain 
\begin{equation}\begin{split}\label{eq:I2}
I_2&\leq \sum_{Q\in\mathcal Q_{2^{-\ell}} }\lambda_Q\|\overrightarrow{b_{Q}}(\mathbf x) -
\overrightarrow{b_{Q}}(\mathbf y)\| \\ &\leq C_{11} \frac{\|\mathbf x-\mathbf y\|}{2^{-\ell}} \Big(1+
\frac{d(\mathbf x,\mathbf y)}{2^{-\ell}}\Big)^{N_1-1}\sum_{Q\in\mathcal Q_{2^{-\ell}} } \lambda_Q \Big(1+\frac{\|\mathbf x-\mathbf z_Q\|}{2^{-\ell}}\Big)^{-1}\Big(1+\frac{d(\mathbf z_Q, \mathbf x)}{2^{-\ell}}\Big)^{-N_1+1}\\
&\leq C_{11} \frac{\|\mathbf x-\mathbf y\|}{2^{-\ell}} \Big(1+
\frac{d(\mathbf x,\mathbf y)}{2^{-\ell}}\Big)^{N_1-1}\boldsymbol\tau_\ell(\mathbf x).
\end{split}\end{equation}

   Observe that  
$$\frac{ 2^{-1}}{\frac{9}{10}(1-C_{14}\varepsilon)} < 1 \quad \text{\rm for all } 0<\varepsilon\leq \varepsilon_0.$$ 

Recall also that
 $A_1=C_{11}(1-C_{14}\varepsilon_0)^{-1}=\frac{100}{99}C_{11}$,  see Table~\ref{tab:constants}.
Hence, from ~\eqref{eq:I1},~\eqref{eq:I2}, {and~\eqref{eq:sigSum}}, we conclude that 
\begin{equation}\label{eq91}
\begin{split}
&(1-C_{14}\varepsilon)^{-1}\|\overrightarrow{G_{\ell}}(\mathbf x)-\overrightarrow{G_{\ell}}(\mathbf y)\|\\
&\leq \frac{\|\mathbf x-\mathbf y\|}{2^{-\ell}}\Big(1+\frac{d(\mathbf x,\mathbf y)}{2^{-\ell}}\Big)^{N_1-1} \Big(\frac{2^{-1}}{\frac{9}{10}(1-C_{14}\varepsilon)}
 \frac{9}{10}A_1\boldsymbol\sigma_{\ell-1}(\mathbf x)
 +\frac{C_{11}}{1-C_{14}\varepsilon}\boldsymbol\tau_{\ell}(\mathbf x)\Big)\\
 &\leq A_1 \frac{\|\mathbf x-\mathbf y\|}{2^{-\ell} }\Big(1+
\frac{d(\mathbf x,\mathbf y)}{2^{-\ell}}\Big)^{N_1-1}\boldsymbol\sigma_\ell(\mathbf x),
\end{split}\end{equation}
where in the last inequality we have used~\eqref{eq:sigSum}. Thus~\eqref{c11} follows from~\eqref{eq:g_lLip} and~\eqref{eq91}. 

{\bf Step 4: proofs of } \eqref{c12} and \eqref{c13}. 
Observe ~\eqref{c12} is  easily obtained  from~\eqref{c14} and \eqref{c2}. Indeed, 
\begin{equation}\label{G3}\begin{split}
\|\overrightarrow{E_{\ell}}(\mathbf x)\|&=\|\overrightarrow{g_{\ell}}(\mathbf x)-\overrightarrow{G_{\ell}}(\mathbf x)\| =\big\|\overrightarrow{g_{\ell}}(\mathbf x)(1-\|\overrightarrow{G_{\ell}}(\mathbf x)\|)\big\|\\
&=\big|1-\|\overrightarrow{G_{\ell}}(\mathbf x)\|\big|\leq A_2\boldsymbol\tau_\ell(\mathbf x)\boldsymbol\sigma_\ell(\mathbf x).
\end{split}\end{equation}

We now turn to prove~\eqref{c13}.  {Set  $C_{15}=4C_{11}^2C_{12}^2A_1(1+A_0)^{N_1-1}$. We start by showing that }
\begin{equation}\label{G4}
\Big|\|\overrightarrow{G_{\ell}}(\mathbf x)\|-\|\overrightarrow{G_{\ell}}(\mathbf y)\|\Big|\leq C_{15} \frac{\|\mathbf x-\mathbf y\|}{2^{-\ell}}\varepsilon^2 \ \ \text{\rm for } \|\mathbf x-\mathbf y\|\leq A_02^{-\ell}.
\end{equation}
{Recall that $\|g_{\ell-1}(\mathbf x)\|=1$ (by the induction hypothesis \eqref{c2}). So, } from~\eqref{G1},~\eqref{eq:defG_L}, and ~\eqref{h_l}, we get
\begin{equation*}\begin{split}
\Big|\|\overrightarrow{G_{\ell}}(\mathbf x)\|&-\|\overrightarrow{G_{\ell}}(\mathbf y)\|\Big|
\leq  \Big|\|\overrightarrow{G_{\ell}}(\mathbf x)\|^2-\|\overrightarrow{G_{\ell}}(\mathbf y)\|^2\Big|\\
&\leq  \Big|\|\overrightarrow{h_{\ell}}(\mathbf x)\|^2-\|\overrightarrow{h_{\ell}}(\mathbf y)\|^2\Big|\\
&\ \ + 2\Big|\langle V(\overrightarrow{h_{\ell}}(\mathbf x)), V(\overrightarrow{g_{\ell-1}}(\mathbf x))\rangle_{\mathbb R^{2d+2}} - \langle V(\overrightarrow{h_{\ell}}(\mathbf y)), V(\overrightarrow{g_{\ell-1}}(\mathbf y))\rangle_{\mathbb R^{2d+2}}\Big|\\
& \leq  2\varepsilon C_{11} C_{12} \|\overrightarrow{h_{\ell}}(\mathbf x)-\overrightarrow{h_{\ell}}(\mathbf y)\|\\
& \ \ \
+ 2\Big|\langle V(\overrightarrow{h_{\ell}}(\mathbf x)), V(\overrightarrow{g_{\ell-1}}(\mathbf x)-\overrightarrow{g_{\ell-1}}(\mathbf y) )\rangle_{\mathbb R^{2d+2}} \Big|\\
&\ \ +2\Big|
\langle V(\overrightarrow{h_{\ell}}(\mathbf x) -   \overrightarrow{h_{\ell}}(\mathbf y)), V(\overrightarrow{g_{\ell-1}}(\mathbf y))\rangle_{\mathbb R^{2d+2}}\Big|\\
&=:J_1+J_2+J_3.
\end{split}\end{equation*}
Applying~\eqref{c7}, ~\eqref{c9}, and~\eqref{eq:tau_infty}, we obtain
\begin{equation*}\begin{split}
J_1&\leq  2\varepsilon C_{11} C_{12}  \sum_{Q\in \mathcal Q_{2^{-\ell}}} \lambda_Q \|\overrightarrow{b_{Q}}(\mathbf x)
-\overrightarrow{b_{Q}}(\mathbf y)\|\\
&\leq 2\varepsilon C_{11}^2 C_{12}  \sum_{Q\in\mathcal Q_{2^{-\ell}} } \lambda_Q \frac{\|\mathbf x-\mathbf y\|}{2^{-\ell}} \Big(1+
\frac{d(\mathbf x,\mathbf y)}{2^{-\ell}}\Big)^{N_1-1}
\Big(1+\frac{\|\mathbf z_Q- \mathbf x\|}{2^{-\ell}}\Big)^{-1}
\Big(1+\frac{d(\mathbf z_Q, \mathbf x)}{2^{-\ell}}\Big)^{-N_1+1}\\
&\leq  2\varepsilon C_{11}^2 C_{12}  (1+A_0)^{N_1-1} \frac{\|\mathbf x-\mathbf y\|}{2^{-\ell}} \boldsymbol\tau_\ell(\mathbf x)\leq  2 C_{11}^2 C_{12}^2
(1+A_0)^{N_1-1}  \frac{\|\mathbf x-\mathbf y\|}{2^{-\ell}} \varepsilon^2 ,
\end{split}
\end{equation*}
since $d(\mathbf x,\mathbf y)\leq \| \mathbf x-\mathbf y\|\leq A_0 2^{-\ell}$. 

To estimate $J_2$,  we use~\eqref{h_l}, the induction hypothesis~\eqref{c11}, and~\eqref{eq:tau_infty} to obtain
\begin{equation*}
\begin{split}
J_2\leq 2\varepsilon C_{11}C_{12} A_1 \frac{\|\mathbf x-\mathbf y\|}{2^{-\ell+1}} \Big(1+
\frac{d(\mathbf x,\mathbf y)}{2^{-\ell+1}}\Big)^{N_1-1}\boldsymbol\sigma_{\ell-1}(\mathbf x)
\leq C_{11}C_{12}^2(1+A_0)^{N_1-1} A_1 \frac{\|\mathbf x-\mathbf y\|}{2^{-\ell}}\varepsilon^2 .
\end{split}
\end{equation*}
In order to estimate $J_3$, we apply~\eqref{c7},  {together with~\eqref{c10}} and get 
\begin{equation*}\begin{split}
J_3&=2\Big|
\big\langle \sum_{Q\in\mathcal Q_{2^{-\ell}}}\lambda_Q V( \overrightarrow{b_{Q}}(\mathbf x) -   \overrightarrow{b_{Q}}(\mathbf y)), V(\overrightarrow{g_{\ell-1}}(\mathbf y)-\overrightarrow{g_{\ell-1}}(\mathbf z_Q))\big\rangle_{\mathbb R^{2d+2}}\Big|.\\
\end{split}\end{equation*}

Then utilizing ~\eqref{c9'}, {the induction hypothesis ~\eqref{c11}, and the definition of $\boldsymbol \tau_\ell$,} we have
\begin{equation*}\begin{split}
J_3&\leq 2
 \sum_{Q\in\mathcal Q_{2^{-\ell}}}\lambda_Q 
C_{11} \frac{\|\mathbf x-\mathbf y\|}{2^{-\ell}}
\Big(1+\frac{\|\mathbf z_Q- \mathbf y\|}{2^{-\ell}}\Big)^{-1}
\Big(1+\frac{d(\mathbf z_Q, \mathbf y)}{2^{-\ell}}\Big)^{-2 N_1+1}\\
& \hskip2cm \times A_1 \frac{\|\mathbf y-\mathbf z_Q\|}{2^{-\ell+1}}  \Big(1+\frac{d(\mathbf y,\mathbf z_Q)}{2^{-\ell}}\Big)^{N_1-1} \boldsymbol\sigma_{\ell-1}(\mathbf y)\\
&\leq C_{11}A_1\frac{\|\mathbf x-\mathbf y\|}{2^{-\ell}} \boldsymbol\tau_\ell(\mathbf y)\boldsymbol\sigma_{\ell-1}(\mathbf y)\\
&\leq   C_{11}C_{12}^2 A_1\frac{\|\mathbf x-\mathbf y\|}{2^{-\ell}} \varepsilon^2,
\end{split} \end{equation*}
where in the last inequality  we have used Lemma \ref{lem:Ch-Gell12}.
So the proof of~\eqref{G4} is complete.

We are now in a position to finish the proof of~\eqref{c13}. By the definition of $\overrightarrow{E_{\ell}}$ and~\eqref{G1},
\begin{equation*}
\begin{split}
\|\overrightarrow{E_{\ell}}(\mathbf x)-\overrightarrow{E_{\ell}}(\mathbf y)\|
&=\Big\| \Big(\frac{\overrightarrow{G_{\ell}}(\mathbf x)}{\|\overrightarrow{G_{\ell}}(\mathbf x)\|} -  \overrightarrow{G_{\ell}}(\mathbf x)\Big)
-\Big(\frac{\overrightarrow{G_{\ell}}(\mathbf y)}{\|\overrightarrow{G_{\ell}}(\mathbf y)\|} -  \overrightarrow{G_{\ell}}(\mathbf y)\Big)\Big\|\\
&= \Big\| \Big(\overrightarrow{G_{\ell}}(\mathbf y)-\overrightarrow{G_{\ell}}(\mathbf x)\Big)
\Big(1-\frac{1}{\| \overrightarrow{G_{\ell}}(\mathbf x)\|}\Big)+\overrightarrow{G_{\ell}}(\mathbf y)
\Big(\frac{1}{\| \overrightarrow{G_{\ell}}(\mathbf x)\|}-\frac{1}{\| \overrightarrow{G_{\ell}
}(\mathbf y)\|}\Big)
\Big\|\\
&\leq  (1-C_{14}\varepsilon)^{-1}\Big\|\overrightarrow{G_{\ell}}(\mathbf x) -\overrightarrow{G_{\ell}}(\mathbf y)\Big\|\cdot \Big|1-\|\overrightarrow{G_{\ell}}(\mathbf x)\|\Big| \\
& \ \ +(1-C_{14}\varepsilon)^{-1}\Big|\|\overrightarrow{G_{\ell}}(\mathbf x) \| -\|\overrightarrow{G_{\ell}}(\mathbf y)\|\Big|\\
&\leq A_1  (1+A_0)^{N_1-1} \frac{\|\mathbf x-\mathbf y\|}{2^{-\ell}}\boldsymbol\sigma_\ell(\mathbf x)C_{14}\varepsilon + C_{15}(1-C_{14}\varepsilon)^{-1}\frac{\|\mathbf x-\mathbf y\|}{2^{-\ell}}\varepsilon^2 ,
\end{split}
\end{equation*}
where in the last inequality for the first summand we have used~\eqref{eq91}, while for the second
one we have applied~\eqref{G4}.
Now  from Lemma \ref{lem:Ch-Gell12} we obtain ~\eqref{c13} with
\begin{equation*}
   A_3=A_1(1+A_0)^{N_1-1}C_{12}C_{14}+\frac{100}{99}C_{15}. 
\end{equation*}

Thus the construction of the functions $\overrightarrow{h_{\ell}}$, $\overrightarrow{g_{\ell}}$ and $\overrightarrow{E_{\ell}}$
satisfying~\eqref{c1}--\eqref{c13}  conducted in Steps 1--4  is complete.

{\bf Step 5: proofs of}~\eqref{c5}--\eqref{c4}. Set $N_2:=N_1-\mathbf N$. Then $\mathbf N+1 < N_2<N_1$  (see \eqref{eq:constantsN}).  First, we check  that there is $C_{16}>0$ such that for {all} $P,Q\in\mathcal Q$, $\ell(P)\leq \ell (Q)$, one has 
\begin{equation}\label{product}
\Big|\int_{\mathbb{R}^N}  \langle \overrightarrow{b_{Q}}(\mathbf x),\overrightarrow{b_{P}}(\mathbf x)\rangle_{\mathbb C^{d+1}}\, dw(\mathbf x)\Big|\leq C_{16} \frac{\ell (P)}{\ell (Q)} w(P)\Big(1+\frac{d(\mathbf z_Q,\mathbf z_P)}{\ell (Q)}\Big)^{{-N_2}} . 
\end{equation}
 Indeed,   using~\eqref{eq:b_Q_int0} and then~\eqref{c9} together with ~\eqref{c8} and the Cauchy-Schwarz inequality, we get  
\begin{equation*}
    \begin{split}
        \Big| &\int_{\mathbb{R}^N}  \langle \overrightarrow{b_{Q}}(\mathbf x), \overrightarrow{b_{P}}(\mathbf x)\rangle_{\mathbb C^{d+1}}\, dw(\mathbf x)\Big|=\Big|\int_{\mathbb{R}^N}  \langle \overrightarrow{b_{Q}}(\mathbf x)- \overrightarrow{b_{Q}}(\mathbf z_P),\overrightarrow{b_{P}}(\mathbf x)\rangle_{\mathbb C^{d+1}}\, dw(\mathbf x)\Big|\\
        &\leq C_{11}^2\int_{\mathbb{R}^N} \frac{\|\mathbf x-\mathbf z_P\|}{\ell (Q)} \Big(1+\frac{\| \mathbf x-\mathbf z_Q\|}{\ell(Q)}\Big)^{-1}  \Big(1+\frac{d( \mathbf x,\mathbf z_P)}{\ell(Q)}\Big)^{N_1-1}  \Big(1+\frac{d( \mathbf x, \mathbf z_Q)}{\ell(Q)}\Big)^{-N_1+1}  \\
        &\ \ \ \ \ \times  \Big(1+\frac{\|\mathbf x-\mathbf z_P\|}{\ell(P)}\Big)^{-1}  \Big(1+\frac{d(\mathbf x, \mathbf z_P)}{\ell(P)}\Big)^{-2N_1}\, dw(\mathbf x)\\
        &\leq C_{11}^2 \int_{\mathbb{R}^N} \frac{\ell(P)}{\ell (Q)} \Big(1+\frac{d( \mathbf x,\mathbf z_Q)}{\ell(Q)}\Big)^{-N_1}\Big(1+\frac{d(\mathbf x,\mathbf z_P)}{\ell(P)}\Big)^{-N_1-1}\, dw(\mathbf x)\\
        &\leq C_{11}^2  \int_{\mathbb{R}^N} \frac{\ell(P)}{\ell (Q)} \Big(1+\frac{d(\mathbf x,\mathbf z_Q)}{\ell(Q)}\Big)^{-N_2}\Big(1+\frac{d( \mathbf x,\mathbf z_P)}{\ell(Q)}\Big)^{-N_2}
        \Big(1+\frac{d(\mathbf x,\mathbf z_P)}{\ell(P)}\Big)^{-\mathbf N-1}
        \, dw(\mathbf x)\\
        &\leq C_{11}^2 \frac{\ell(P)}{\ell (Q)} \Big(1+\frac{d(\mathbf  z_P,\mathbf z_Q)}{\ell(Q)}\Big)^{-N_2}\int_{\mathbb{R}^N}  \Big(1+\frac{d(\mathbf x,\mathbf z_P)}{\ell(P)}\Big)^{-\mathbf N-1}
        \, dw(\mathbf x)\\
       &\leq  CC_{11}^2  \frac{\ell (P)}{\ell (Q)} w(P)\Big(1+\frac{d(\mathbf z_Q,\mathbf z_P)}{\ell (Q)}\Big)^{{-N_2}},
    \end{split}
\end{equation*}
where in the fourth inequality we have used the relation 
 $$\Big(1+\frac{d(\mathbf x,\mathbf z_Q)}{\ell(Q)}\Big)^{-N_2}\Big(1+\frac{d( \mathbf x,\mathbf z_P)}{\ell(Q)}\Big)^{-N_2} \leq \Big(1+\frac{d(\mathbf z_P,\mathbf z_Q)}{\ell(Q)}\Big)^{-N_2},$$ 
 while 
in the last inequality we have applied~\eqref{eq:finite_integral} with $\varepsilon=1$. Thus~\eqref{product} is verified.

Now for non-negative integers $s_1\leq s_2$, let $\mathcal Q_{s_1,s_2}=\bigcup_{\ell=s_1}^{s_2}\mathcal Q_{2^{-\ell}}$. Set $D_0=[0,1)$, $D_j=[2^{j-1},2^{j})$, $j\geq 1$. 
In virtue of~\eqref{product},
\begin{equation*}\begin{split}
\Big\|\sum_{\ell=s_1}^{s_2} \overrightarrow{h_{\ell}}\Big\|_{L^2(dw)}^2
&= \sum_{Q\in \mathcal Q_{s_1,s_2}}\sum_{P\in \mathcal Q_{s_1,s_2}} \lambda_Q{\lambda}_P
\int_{\mathbb{R}^N} \langle \overrightarrow{b_{Q}}(\mathbf x),\overrightarrow{b_{P}}(\mathbf x)\rangle_{\mathbb C^{d+1}}\,  dw(\mathbf x)\\
&\leq 2C_{16}\sum_{Q\in \mathcal Q_{s_1,s_2}} \sum_{\substack{P\in \mathcal Q_{s_1,s_2}\\ \ell (P)\leq \ell (Q)}}
\lambda_Q\lambda_P  \frac{\ell (P)}{\ell (Q)} w(P)\Big(1+\frac{d(\mathbf z_Q,\mathbf z_P)}{\ell (Q)}\Big)^{-N_2}\\
&\leq 2C_{16}\sum_{n=0}^{ s_2-s_1} \sum_{Q\in \mathcal Q_{s_1,s_2}} \sum_{\substack {P\in \mathcal Q_{s_1,s_2}\\ \ell (P)=2^{-n} \ell (Q)}}
\lambda_Q\lambda_P  \frac{\ell (P)}{\ell (Q)} w(P)\Big(1+\frac{d(\mathbf z_Q,\mathbf z_P)}{\ell (Q)}\Big)^{-N_2}\\
&\leq  2C_{16}
\sum_{n=0}^{{ s_2-s_1}} \sum_{j=0}^\infty \sum_{Q\in \mathcal Q_{s_1,s_2}} \sum_{\substack{P\in \mathcal Q_{s_1,s_2}\\ \ell (P)=2^{-n} \ell (Q) \\ \ell (Q)^{-1}d(\mathbf z_Q,\mathbf z_P) \in D_j}}
\lambda_Q\lambda_P  w(P) 2^{-n}2^{-jN_2}.\\
 \end{split}\end{equation*}
Applying  the Cauchy-Schwarz inequality two times, we obtain 
 \begin{equation*}\begin{split}
&\Big\|\sum_{\ell=s_1}^{s_2} \overrightarrow{h_{\ell}}\Big\|_{L^2(dw)}^2\\
&\leq  2C_{16}\sum_{n=0}^{ s_2-s_1} \sum_{j=0}^\infty  2^{-n}2^{-jN_2}
\Big\{\sum_{Q\in \mathcal Q_{s_1,s_2}}\lambda_Q^2w(Q)\Big\}^{1\slash 2}
\Bigg\{\sum_{Q\in\mathcal Q_{s_1, s_2}}\Big(\sum_{\substack{P\in\mathcal Q_{s_1,s_2}\\ \ell (P)=2^{-n} \ell (Q) \\
 \ell (Q)^{-1}d(\mathbf z_Q,\mathbf z_P)\in D_j}} \lambda_Pw(P)\Big)^{2}\frac{1}{w(Q)}\Bigg\}^{1\slash 2}\\
&\leq  2C_{16}\sum_{\substack{n\geq 0\\ j\geq 0}}
2^{-n}2^{-jN_2}
\Big\{\sum_{Q\in \mathcal Q_{s_1,s_2}} \lambda_Q ^2w(Q)\Big\}^{1\slash 2}\\
&\hskip2cm \times
\Bigg\{\sum_{Q\in \mathcal Q_{s_1,s_2}}\Bigg(\sum_{\substack{ P\in\mathcal Q_{s_1,s_2}\\ \ell (P)=2^{-n} \ell (Q) \\
 \ell (Q)^{-1} d(\mathbf z_Q,\mathbf z_P)\in D_j}}\lambda_P^2w(P)\Bigg)
 \Bigg(\sum_{\substack{ P\in\mathcal Q_{s_1,s_2}\\ \ell (P)=2^{-n} \ell (Q) \\
 \ell(Q)^{-1}d(\mathbf z_Q,\mathbf z_P)\in D_j}}w(P)\Bigg)\frac{1}{w(Q)}\Bigg\}^{1\slash 2}.
\end{split}\end{equation*}
Observe that there is a constant $C_{17}\geq 1$, which depends on $N, R, k$ such that for any integer $n\geq 0$ and any cube   $Q\in \mathcal Q$, one has 
\begin{equation}\label{eq:C_19_in}
    \sum_{\substack{ P\in \mathcal Q\\ \ell (P)=2^{-n} \ell (Q) \\
 d(\mathbf z_Q,\mathbf z_P)\leq 2^j\ell (Q)}}w(P)\leq C_{17} 2^{j\mathbf N}w(Q),
\end{equation}
$$$$
 so, with $C_{18}=2C_{16}\sqrt{C_{17}}$, we have 
 \begin{equation}\label{eq:C_20_in}\begin{split}
&\|\sum_{\ell=s_1}^{s_2} \overrightarrow{h_{\ell}}\|_{L^2(dw)}^2\\
&\leq  C_{18}\sum_{\substack{n\geq 0\\ j\geq 0}}
2^{-n}2^{-j(N_2-\mathbf N\slash 2)}
\Big\{\sum_{Q\in \mathcal Q_{s_1,s_2}} \lambda_Q^2w(Q)\Big\}^{1\slash 2}
\Bigg\{\sum_{Q\in \mathcal Q_{s_1,s_2}}\sum_{\substack{P\in\mathcal Q_{s_1,s_2}\\ \ell (P)=2^{-n} \ell (Q) \\
 \ell (Q)^{-1}d(\mathbf z_Q,\mathbf z_P)\in D_j }} \lambda_P^2w(P)\Bigg\}^{1\slash 2}.
\end{split}\end{equation}

 Note that there is  $C_{19}\geq 1$, which depends on $N, R, k$, such that 
for all $j,n\geq 0$ and every $P\in \mathcal Q$ the number of cubes $Q\in \mathcal Q$  such that
$\ell (P)=2^{-n}\ell (Q)$, $d(\mathbf z_Q,\mathbf z_P)\leq 2^{j}\ell (Q)$ is bounded by {$C_{19}2^{jN} \leq C_{19}2^{j\mathbf N}$.}  Therefore, { with $C_{20}=C_{18}\sqrt{C_{19}}$}
\begin{equation}\begin{split}\label{sumh}
&\Big\|\sum_{\ell=s_1}^{s_2} \overrightarrow{h_{\ell}}\Big\|_{L^2(dw)}^2\\
& \leq  C_{20}\sum_{\substack{n\geq 0\\ j\geq 0}}
2^{-n}2^{-j(N_2-\mathbf N)}
\Big\{\sum_{Q\in \mathcal Q_{s_1,s_2}}\lambda_Q^2w(Q)\Big\}^{1\slash 2}
\Big\{\sum_{P\in \mathcal Q_{s_1,s_2}}\lambda_P^2w(P) \Big\}^{1\slash 2}\\
&\leq  {4C_{20} } \sum_{Q\in \mathcal Q_{s_1,s_2}} \lambda_Q^2w(Q) \leq {4C_{20}} \int_{2^{-s_{2}}}^{2^{-s_1+1}} \int_{\mathbb{R}^N}  |{\boldsymbol \phi}_t*{\boldsymbol \phi}_t*f(\mathbf y)|^2\,dw(\mathbf y)\frac{dt}{t},
\end{split}\end{equation}
where in the last inequality we have used~\eqref{eq:lambda}. From the Plancherel's equality~\eqref{eq:Plancherel} for the  Dunkl transform we easily conclude that $\|\sum_{\ell=s_1}^{s_2} \overrightarrow{h_{\ell}}\|_{L^2(dw)}^2\to 0$ as $s_1,s_2\to\infty$, since $f\in L^2(dw)$. 
Thus~\eqref{sumh} implies that $\sum_{l\geq 0} \overrightarrow{h_l}(\mathbf x)$
converges in $L^2(dw)$ {and by  
 Lemma~\ref{lem:BMO_compactly_supported}, }
\begin{equation}\begin{split}\label{sumb}
\Big\|\sum_{l\geq 0} \overrightarrow{h_l}\Big\|_{L^2(dw)}&=\Big\| \sum_{Q\in\mathcal Q, \ \ell(Q)\leq 1} \lambda_Q \overrightarrow{b_Q}\Big\|_{L^2(dw)}\leq {2\sqrt{C_{20}} } \| f\|_{L^2(dw)}\\
&\leq {2\sqrt{C_{20}} C_{J{\text{-}}N,2}}\cdot  \| f\|_{{\rm BMO}(\mathbf X)}
w(B(0, 1))^{1\slash 2}.
\end{split}\end{equation}

Fix $Q\in \mathcal Q$. Let $m\in \mathbb Z$ be such that $2^{-m}=\ell (Q)$. From~\eqref{c12} and Lemma \ref{lem:Ch-Gell12}, we obtain 
\begin{equation}\begin{split}\label{BMO1}
\int_{{2}Q} \sum_{j\geq {\max (m,0)}} \|\overrightarrow{E_j}(\mathbf x)\|\, dw(\mathbf x)
& \leq A_2 \int_{{2}Q}\sum_{ j\geq { \max (m,0)}} \boldsymbol\sigma_j(\mathbf x )\boldsymbol\tau_j(\mathbf x)\, dw(\mathbf x)\\
& \leq A_2C_{12}
w(Q)\| f\|^2_{{\rm BMO}(\mathbf{X})}.
\end{split}\end{equation}

Applying ~\eqref{BMO1} to $Q\in\mathcal Q$ such that $\ell (Q)=2^0=1$, we conclude that the series $\sum_{j\geq 0} \overrightarrow{E_j}(\mathbf x)$ converges locally in $L^1(dw)$ to a function $\overrightarrow E^0(\mathbf x)$.

On the other hand, we deduce  from~\eqref{c13} that for $\mathbf x,\mathbf y\in {2} Q$, $\ell (Q)=2^{-m}$, { $m\in\mathbb Z$, }
we have the following estimate  on the finite sum:
\begin{equation}\label{BMO2}
\sum_{\min(m,0)\leq j<m}  \|\overrightarrow{E_j}(\mathbf x)-\overrightarrow{E_j}(\mathbf y)\|\leq {A_3} \|\mathbf x-\mathbf y\| \| f\|_{{\rm BMO}(\mathbf X)}^2
 \sum_{\min(m,0)\leq j<m}2^{j}\leq {A_3\sqrt{N}}\| f\|^2_{{\rm BMO}(\mathbf X)},
\end{equation}
where the left  side of~\eqref{BMO2} is  understood to be zero if $m\leq 0$.  
The estimates~\eqref{BMO1} together with~\eqref{BMO2} imply that 
$$ \frac{1}{w({ 2} Q)} \int_{{2}Q}  \Big\| \overrightarrow E^0(\mathbf x)-\sum_{\min(m,0)\leq j<m} \overrightarrow{E_j}(\mathbf z_Q)\Big\|\, dw(\mathbf x)\leq {(A_3\sqrt{N} +C_{12}A_2)} \| f\|_{{\rm BMO}(\mathbf X)}^2,$$
which together with the doubling property give  ~\eqref{c5} and~\eqref{c6} with 
$$A_4= A_3\sqrt{N} +C_{12}A_2.$$

Since $\sum_{j=0}^{\ell} (\overrightarrow{h_j}+\overrightarrow{E_j})=\overrightarrow {g_{\ell}}-\overrightarrow{g_{-1}}$,
we conclude~\eqref{c4} from~\eqref{c5},~\eqref{sumb},  and~\eqref{c2}.

\ 

{\bf Step 6: completion of the proof of Theorem~\ref{KeyTheorem}.}
Having~\eqref{c1}--\eqref{c4} already proved, we are in a position to finish the proof of Theorem~\ref{KeyTheorem}.

It follows from~\eqref{G3},~\eqref{eq:sigma_infty}, and the definition of $\boldsymbol\tau_\ell(\mathbf x)$ (see Section~\ref{sec:auxiliary})  that
\begin{equation}\label{eq:ELL}\begin{split}
\|\overrightarrow{E_{\ell}}(\mathbf x)\| &
\leq C_{12}A_2\| f\|_{{\rm BMO}(\mathbf X)} \sum_{Q\in\mathcal Q_{2^{-\ell}} } \lambda_Q \Big(1+\frac{d(\mathbf x,\mathbf z_Q) }{\ell (Q)}\Big)^{-N_0}.
\end{split}
\end{equation}
Recall that $\text{\rm supp}\, f\subseteq B(0,1)$.  Hence $\lambda_Q=0$, if $d(\mathbf z_Q, 0)>4\sqrt{N}{=A_0/8}$ and {$\ell (Q)\leq 1$.} Moreover, {there is a constant $C_{21} \geq 1$, which depends only on $N,R$}, such that 
\begin{equation}\label{eq:number_Q}
    \#\{Q\in\mathcal Q_{2^{-\ell}} :  \  \lambda_Q\ne 0\}\leq {C_{21}}2^{\ell N}, \quad { \text{\rm for } \ell\geq 0.}
\end{equation}
 Hence, if $\|\mathbf x\|=d(\mathbf x,0) >A_0/2$ and $\lambda_Q\ne 0$, then $2^{-1}d(\mathbf x,0)\leq d(\mathbf x,\mathbf z_Q)\leq 2 d(\mathbf x,0)$ and   {from \eqref{eq:ELL},   \eqref{eq:lambda_bound}, and \eqref{eq:constantsN}},  we conclude that 
 \begin{equation}\label{sumE}\begin{split}
  \|\overrightarrow{E^{0}}(\mathbf x)\|&\leq 
\sum_{\ell\geq 0}\|\overrightarrow{E_{\ell}}(\mathbf x)\| 
\leq {C_{12}A_2 C_{10} C_{21}2^{N_0}}\| f\|_{{\rm BMO}(\mathbf X)}^2 \sum_{l\geq 0}2^{\ell N}\Big(1+\frac{d(\mathbf x, 0)}{ 2{^{-\ell}}}\Big)^{-N_0}\\
&\leq   { 2C_{12}A_2 C_{10} C_{21}2^{N_0}}\| f\|_{{\rm BMO}(\mathbf X)}^2d(\mathbf x,0)^{-N_0} {= C_{22} \|\mathbf x\|^{-N_0}\| f\|_{{\rm BMO}(\mathbf X)}^2}.
\end{split}\end{equation}

Using~\eqref{c3}, \eqref{c4}, and \eqref{c5},  we write 
\begin{equation}\label{eq:sum_h_ELL}
\begin{split}
\sum_{\ell\geq 0} \overrightarrow{h_{\ell}}
& =\sum_{l\geq 0} (\overrightarrow{g_{\ell}}-\overrightarrow{g_{\ell-1}}) -\sum_{l\geq 0} \overrightarrow{E_{\ell}}=  \overrightarrow g-\overrightarrow{g_{-1}}- \overrightarrow{E^{0}}\\
&= \overrightarrow g-\overrightarrow{g_{-1}} -\overrightarrow{E^{0}} \chi_{B(0, A_0/2)^c} - \overrightarrow{E^{0}}\chi_{B(0, A_0/2)}.
\end{split}
\end{equation}
From~\eqref{c2} and~\eqref{sumE} we have
 \begin{equation}\label{norm1}\| \overrightarrow g-\overrightarrow{g_{-1}}- \ \overrightarrow{E^{0}}\chi_{B(0, A_0/2)^c}\|_{L^\infty} \leq 2 + { C_{22}} \| f\|_{{\rm BMO}(\mathbf X)}^2.
 \end{equation}
Now using~\eqref{sumE} combined with~\eqref{c6}, we get
\begin{equation}\label{norm2} \Big\|\overrightarrow{E^{0}}\chi_{B(0, A_0/2)}\Big\|_{{\rm BMO}(\mathbf X)}{ \leq 
\|\overrightarrow{E^{0}}\|_{{\rm BMO}(\mathbf X)} + \| \overrightarrow{E^{0}}\chi_{B(0, A_0/2)^c}\|_{L^\infty} 
}\leq { (A_4+C_{22}) } \| f\|_{{\rm BMO}(\mathbf X)}^2
\end{equation}
and, consequently, {by Lemma~\ref{lem:BMO_compactly_supported}, }
\begin{equation}\label{norm3}
\Big\| \overrightarrow{E^{0}}\chi_{B(0,A_0/2)}\Big\|_{L^2(dw)}\leq { C_{23}} w(B(0,1))^{1\slash 2} \| f\|_{{\rm BMO}(\mathbf X)}^2, 
\end{equation}
{with $C_{23}=C_{J{\text{-}}N,2}(A_0/2)^{\mathbf N/2}(A_4+C_{22})$. }
Finally, from ~\eqref{eq:sum_h_ELL}, ~\eqref{sumb}, and~\eqref{norm3}, we obtain  
\begin{equation}\label{norm4}\begin{split}
 \Big\| \overrightarrow g-\overrightarrow{g_{-1}} &- \overrightarrow{E^{0}}\chi_{B(0, A_0/2)^c} \Big\|_{L^2(dw)} = \Big\| \sum_{\ell\geq 0}  \overrightarrow{h_{\ell}}+ \overrightarrow{E^{0}}\chi_{B(0, A_0/2)}\Big\|_{L^2(dw)} \\
&\leq {(2\sqrt{C_{20}} C_{J{\text{-}}N,2}+C_{23})}w(B(0,1))^{1\slash 2}\Big(\| f\|_{{\rm BMO}(\mathbf X)}+\| f\|_{{\rm BMO}(\mathbf X)}^2\Big).
\end{split}\end{equation}

Recall that for {$\overrightarrow f =(f_0, f_1,f_2,...,f_d)$, we denote $\overrightarrow {S^*} \circ \overrightarrow f=\sum_{j=0}^d \mathbf S^{\{j\}*}f_j$.} Let
\begin{equation}\label{eq:function_F} F=\overrightarrow {S^*} \circ \Big\{  \overrightarrow{E^{0}}\chi_{B(0, A_0/2)}\Big\}. 
\end{equation}
  The H\"ormander multiplier theorem~\cite[Theorem 1.2]{DzHe-JFA}  asserts that the operators $\mathbf S^{\{j\}}$ are bounded on $H^1_{\rm Dunkl}$. Hence, by ~\eqref{norm2} and duality arguments, {there is a constant $C_{24} \geq 1$ which depends on the system $\overrightarrow{S}$ such that for all $f$ satisfying $\| f\|_{{\rm BMO}(\mathbf X)}=\varepsilon<\varepsilon_0$, $\text{supp}\, f\subset B(0,1)$), we have } 
  \begin{equation}\label{eq:FinBMO}
      \| F\|_{{\rm BMO}(\mathbf X)}\leq C_{24}\| 
\overrightarrow{E^{0}}\chi_{B(0, A_0/2)} \|_{{\rm BMO}(\mathbf X)}\leq { C_{24}(A_4+C_{22})w(B(0,1))^{1/2}} \| f\|_{{\rm BMO}(\mathbf X)}^2. 
  \end{equation}
   { Moreover,  { using \eqref{eq:S-bound}, \eqref{eq:doubling},  \eqref{norm3}, and the Cauchy-Schwarz inequality,   we get the following bound on $|F(\mathbf x)|$ for $ \|\mathbf x\|=d(\mathbf x,0)>A_0$: 
  \begin{equation}\label{eq:bound_F}\begin{split}
 |F(\mathbf x)|&\leq C_{0,0} \sup_{\|\mathbf y\|\leq A_0/2} w(B(\mathbf x,d(\mathbf x,\mathbf y)))^{-1} \| \overrightarrow{E^{0}}\chi_{B(0,A_0/2)}\|_{L^1(dw)}
  \leq C_{25} w(B(0,1))\|\mathbf x\|^{-\mathbf N} \| f\|^2_{{\rm BMO}(\mathbf X)}.
  \end{split}  \end{equation}
 with $C_{25}\geq 1$ depending on $N,R,k,\overrightarrow{S}$ but   independent of $f$, provided $\| f\|_{{\rm BMO}(\mathbf X)}=\varepsilon$, $\text{\rm supp}\, f\subset B(0,1)$. }
{Thus from \eqref{eq:bound_F} and \eqref{eq:FinBMO}, we conclude that  there is a constant $C_{26} \geq 1$ independent of $f\in {\rm BMO}(\mathbf X)$ satisfying $\text{supp}\, f\subset B(0,1)$, $\|f\|_{{\rm BMO}}=\varepsilon$, such that}

{\begin{equation}
    \label{eq:L2Ac}
    \| F\chi_{B(0,A_0)^c}\|_{L^2(dw)}\leq  C_{26} \| f\|_{{\rm BMO}(\mathbf X)}^2=C_{26}\varepsilon^2, 
\end{equation} }

{\begin{equation}
    \label{eq:L_inftyAc}
    \| F\chi_{B(0,A_0)^c}\|_{L^\infty}\leq  C_{26} \| f\|_{{\rm BMO}(\mathbf X)}^2=C_{26}\varepsilon^2. 
\end{equation} }

\begin{equation}\label{eq:FA_0} 
\begin{split}
\| F\chi_{B(0, A_0)}\|_{{\rm BMO}(\mathbf X)}&\leq   \| F\chi_{B(0,A_0)^c}\|_{L^\infty} +\|F\|_{{\rm BMO}(\mathbf X)} \\
&\leq (C_{26}+C_{24}(A_4+C_{22})w(B(0,1))^{1/2})\|f\|_{{\rm BMO}(\mathbf X)}^2\\
& =(C_{26}+C_{24}(A_4+C_{22})w(B(0,1))^{1/2})\varepsilon^2,
\end{split}
\end{equation}

{ Let us denote  $\overrightarrow{g}'=( g_0-F\chi_{B(0, A_0)^c},0,\dots,0)$.  {Let us recall that $\mathbf S^{\{0\}}={\rm Id}$.}  }Put 
 \begin{equation}\label{eq:functions}\begin{cases}
 & \overrightarrow{\widetilde g}=\overrightarrow{g}'+\overrightarrow g-\overrightarrow{g_{-1}}- \overrightarrow{E^0}\chi_{B(0, A_0/2)^c}= \overrightarrow{g}'+\sum_{\ell\geq 0}  \overrightarrow{h_{\ell}}+ \overrightarrow{E^0}\chi_{B(0, A_0/2)}, \\
& f_1=-F\chi_{B(0, A_0)}.
\end{cases}\end{equation}
}
{ 
Then, by \eqref{norm1}, { \eqref{eq:g_0infty},  and \eqref{eq:L_inftyAc} we obtain  
\begin{equation*}\label{eq:ggg} 
    \begin{split}
        \sum_{j=0}^d \|\widetilde g_j\|_{L^\infty} &\leq \| g_0\|_{L^{\infty}}+\| F\chi_{B(0,A_0)^c}\|_{L^\infty}  +  \sqrt{d+1} \| \overrightarrow g-\overrightarrow{g_{-1}}- \ \overrightarrow{E^{0}}\chi_{B(0, A_0/2)^c}\|_{L^\infty} \\
        &\leq  C_{13}\| f\|_{{\rm BMO}(\mathbf X)}+ C_{26}\| f\|_{{\rm BMO}(\mathbf X)}^2+  \sqrt{d+1}(2+C_{22} \|f\|_{{\rm BMO}(\mathbf X)}^2)
    \end{split}
\end{equation*}
which gives 
  \eqref{eq:g_infty}.}  For relations  \eqref{eq:f_1BMO}, see   \eqref{eq:FA_0}. Now   \eqref{eq:g_L2} is exactly  \eqref{norm4} { combined with \eqref{eq:L2Ac} and \eqref{eq:g_0infty}}. Finally, for the decomposition  \eqref{eq:decom_need}, we use \eqref{eq:functions} and  write 
  { 
 \begin{equation*}
     \begin{split}
         \overrightarrow {S^*} \circ \overrightarrow{\widetilde g} +f_1 &= 
(g_0-F\chi_{B(0, A_0)^c})+  \overrightarrow {S^*} \circ \Big(\sum_{\ell \geq 0}\overrightarrow h_{\ell}\Big) + \overrightarrow {S^*} \circ \Big(\overrightarrow{E^0}\chi_{B(0, A_0/2)}\Big) +f_1\\
         &=(g_0-F\chi_{B(0, A_0)^c}) + f_0+F -F\chi_{B(0, A_0)}\\
         &= g_0 +f_0=f,
     \end{split}
 \end{equation*} }
 where in the second equality we have applied   \eqref{c1}, \eqref{eq:f_deco_1}, and \eqref{eq:function_F}.}
\end{proof}
\subsection{Proof of Theorem \ref{UchiyamaDecomposition0}}
\begin{proof}[Proof of Theorem \ref{UchiyamaDecomposition0}]
Let  ${A,A_0\geq 1}$ and $0<\varepsilon_0<1$ be as in {Theorem}~\ref{KeyTheorem}.    Fix  $0<\varepsilon <\varepsilon_0$ such that $A\varepsilon <1$ and $AA_0^{\mathbf N\slash 2} \varepsilon <1$. Let $0\not\equiv f$ be a compactly supported ${\rm BMO}(\mathbf X)$-function. Without loss of generality we may assume that  $\| f\|_{{\rm BMO}(\mathbf X)}=\varepsilon$. Let $r>0$ be  such that  $\text{\rm supp}\, f\subseteq  B(0,r)$. {Recall that $\mathbf S^{\{0\}}={\rm Id}$.}
Decompose $f$ according to Theorem~\ref{KeyTheorem}, i.e.,
\begin{align*}
    f=\sum_{j=1}^d \mathbf S^{\{j\}*} \widetilde g_j +\widetilde g_0+ f_1.
\end{align*}
Set $\widetilde g_j^{\{0\}}:=\widetilde g_j$ for $j=0,1,...,d$. If $f_1 \equiv 0$, we are done. Otherwise we apply
Theorem~\ref{KeyTheorem}
to the function $\varepsilon{f_1}/{\| f_1\|_{{\rm BMO}(\mathbf{X})}}$ and obtain functions
$f_2$, $\widetilde g_j^{\{1\}}$, $j=0,1,...,d$, such that
\begin{equation*}
f_1=\sum_{j=1}^d \mathbf S^{\{j\}*}\widetilde g_j^{\{1\}} + \widetilde g_0^{\{1\}} + f_2,
\end{equation*}

\begin{equation*}
 \sum_{j=0}^d \| \widetilde g_j^{\{1\}}\|_{L^\infty} \leq (2{ \sqrt{d+1}}+A\varepsilon)\frac{\| f_1\|_{{\rm BMO(\mathbf X)}}}{\varepsilon}\leq (2{\sqrt{d+1}}+A\varepsilon)A\varepsilon, 
\end{equation*}

\begin{equation*}
 \| f_2\|_{{\rm BMO}(\mathbf X)}\leq \frac{\| f_1\|_{{\rm BMO(\mathbf X)}}}{\varepsilon} A\varepsilon^2\leq A^2\varepsilon^3, \ \ \text{\rm supp}\, f_2\subseteq B( 0 , A_0^2 r),
\end{equation*}
\begin{equation*}
 \sum_{j=0}^d \| \widetilde g_j^{\{1\}}\|_{L^2(dw)}\leq \frac{\| f_1\|_{{\rm BMO(\mathbf X)}}}{\varepsilon} A w(B(0,A_0r))^{1\slash 2} \varepsilon\leq A^2  w(B(0, A_0r))^{1\slash 2}\varepsilon^2.
\end{equation*}
Continuing this procedure we obtain
sequences of functions $\{\widetilde g^{\{n\}}_j\}_{n \in \mathbb{N}\cup \{0\}}$, $j=0,1,...,d$, and $\{f_n\}_{n \in \mathbb{N}}$ such that
\begin{align*}
    f=\sum_{j=1}^d \mathbf S^{\{j\}*}  \widetilde g_j^{\{0\}} +\widetilde g_0^{\{0\}}+ f_1;
\end{align*}

\begin{align*}
     f_n=\sum_{j=1}^d \mathbf S^{\{j\}*}  g_j^{\{n\}} + g_0^{\{n\}} + f_{n+1}, \quad n=1,2,\dots ; 
\end{align*}
\begin{equation}\label{eq:L_infty_g}
    \sum_{j=0}^d \| \widetilde g_j^{\{n\}}\|_{L^\infty} \leq (2{ \sqrt{d+1}}+A \varepsilon)A^n\varepsilon^n, \quad n=0,1,2,\dots ;
\end{equation}
\begin{equation}\label{eq:1}
    \sum_{j=0}^d \| \widetilde g_j^{\{n\}}\|_{L^2(dw)}\leq A^{n+1}  w(B(0, A_0^nr))^{1\slash 2}\varepsilon^{n+1},\quad n=0,1,2,\dots ;
\end{equation}
\begin{equation}\label{eq:2}
    \| f_{n+1}\|_{{\rm BMO(\mathbf X)}} \leq A^{n+1}\varepsilon^{n+2}, \ \ \text{\rm supp}\, f_{n+1}\subseteq B(0, A_0^{n+1}r), \quad n=0,1,2,\dots .
\end{equation}
Using Lemma~\ref{lem:BMO_compactly_supported} together with~\eqref{eq:measure},~\eqref{eq:1}, and~\eqref{eq:2} we get 
$$ \| f_n\|_{L^2(dw)} \leq C' w(B(0, A_0^n r))^{1\slash 2}
\| f_n\|_{{\rm BMO(\mathbf X)}} \leq
C'' A_0^{\mathbf N n\slash 2} w(B(0,  r))^{1\slash 2}A^n\varepsilon^{n+1}\to 0, \ \ \ \text{as }\ n\to\infty ,$$
$$ \sum_{n=0}^\infty \sum_{j=0}^d \|\widetilde g_j^{\{n\}}\|_{L^2(dw)}
\leq C \sum_{n=0}^\infty A^{n+1} A_0^{\mathbf Nn\slash 2} w(B(0, r))^{1\slash 2} \varepsilon^{n+1}<\infty. $$

Putting $\boldsymbol g_j=\sum_{n=0}^\infty \widetilde g_j^{\{n\}}$  
 for $j=0,1,\ldots,d$ and using~\eqref{eq:L_infty_g}, we complete the proof of Theorem \ref{UchiyamaDecomposition0}.
\end{proof}

\section{Proof of characterization of \texorpdfstring{$H^1_{\rm Dunkl}$}{H1} by systems of singular integrals}
{
\begin{proof}[Proof of Theorem \ref{teo:main_1}]
  Having Theorem \ref{UchiyamaDecomposition0} about the decompositions  of compactly supported ${\rm BMO}(\mathbf X)$-functions, the proof of Theorem~\ref{teo:main_1} goes by standard arguments. 
  For the sake of completeness we provide the details. Assume that $f$ is a complex valued  $L^1(dw)$-function such that $\mathbf S^{\{j\}}f\in L^1(dw)$ for all   $j=1,\dots,d$. Recall that  $\mathbf S^{\{0\}}=\text{\rm Id}$.   Consider the functional 
  \begin{equation}\label{functional:Phi}
      C_c (\mathbf X) \ni \varphi \mapsto \Phi(\varphi)=\int_{\mathbb{R}^N} \varphi (\mathbf x)f(\mathbf x)\, dw(\mathbf x)=  \lim_{t\to 0}\int_{\mathbb{R}^N} \varphi(\mathbf x) (f*h_t)(\mathbf x)\, dw(\mathbf x).
  \end{equation}
Applying Theorem~\ref{UchiyamaDecomposition0}, {there are  $\boldsymbol g_j\in L^2(dw)\cap L^\infty$, $j=0,1,\dots, d$, such that } 
\begin{equation*}
    \varphi =\sum_{j=1}^d \mathbf S^{\{j\}*} \boldsymbol g_j+\boldsymbol g_0=\sum_{j=0}^d \mathbf S^{\{j\}*} \boldsymbol g_j, 
\end{equation*}

\begin{equation}
    \label{eq:sum-g}
    \sum_{j=0}^\infty \| \boldsymbol g_j\|_{L^\infty }\leq C \|\varphi \|_{{\rm BMO}(\mathbf{X})}. 
\end{equation}  Note that $f*h_t\in L^2(dw)$ for all $t>0$, so 
\begin{equation*}\begin{split}
    \Phi(\varphi)&=\lim_{t\to 0} \int_{\mathbb{R}^N} \sum_{j=0}^d \mathbf S^{\{j\}*}\boldsymbol g_j (\mathbf{x})(f*h_t)(\mathbf{x})dw(\mathbf{x}) =\lim_{t\to 0} \sum_{j=0}^d \int_{\mathbb{R}^N} \boldsymbol g_j (\mathbf{x}) \mathbf S^{\{j\}} (f*h_t)(\mathbf{x})\, dw(\mathbf{x})\\
    &= \lim_{t\to 0} \sum_{j=0}^d \int_{\mathbb{R}^N} \boldsymbol g_j (\mathbf{x}) ((\mathbf S^{\{j\}} f)*h_t)(\mathbf{x})\, dw(\mathbf{x})
    =\sum_{j=0}^d \int_{\mathbb{R}^N} \boldsymbol g_j(\mathbf{x}) \mathbf S^{\{j\}}f(\mathbf{x})\, dw(\mathbf{x}).
    \end{split}
\end{equation*}
Hence, by~\eqref{eq:sum-g}, 
\begin{align*}
    |\Phi(\varphi)|\leq C \|\varphi\|_{{\rm BMO}(\mathbf{X})} \sum_{j=0}^d \|\mathbf S^{\{j\}}f\|_{L^1(dw)}.
\end{align*}
{So, $\Phi$ can be extended to a bounded functional on ${\rm VMO}(\mathbf X)$ {(see Section \ref{sec:BMO})} and its norm is controlled from above by $C \sum_{j=0}^d \|\mathbf S^{\{j\}}f\|_{L^1(dw)}$. By Coifman-Weiss \cite{CW} (see also Section \ref{sec:BMO}),  the functional $\Phi$  is represented by integration with a unique   $H^1_{\rm Dunkl}$-function (see \eqref{functional:Phi}).} Hence $f\in H^1_{\rm Dunkl}$ and 
$$\| f\|_{H^1_{\rm Dunkl}}\sim \| \Phi\|\leq C \sum_{j=0}^d \|\mathbf S^{\{j\}}f\|_{L^1(dw)}.$$

    The inverse estimate $\|\mathbf S^{\{j\}}f\|_{L^1(dw)}\leq C \| f\|_{H^1_{\rm Dunkl}}$ is Theorem 1.2 of~\cite{DzHe-JFA}.
\end{proof}

}
\appendix

\section{Proof of Theorem \texorpdfstring{\ref{teo:bounds_conv}}{3.10}}
\label{sec:Appendix}

\begin{proof}[Proof of Theorem \ref{teo:bounds_conv}] 
\ 
{ Thanks to the scaling, in the proof of the theorem we may assume that $0<s\leq t=1$.  }

{\it Proof of~\eqref{teo_a}}. Recall that  $\int_{\mathbb{R}^N} { \phi}_s(\mathbf{z},\mathbf{y})\, dw(\mathbf{z})=0$ for all $s>0$ and $\mathbf{y} \in \mathbb{R}^N$. Applying~\eqref{translations_kernels:a} and~\eqref{translations_kernels:b} of Theorem~\ref{teo:translations_kernels} and~\eqref{eq:supp_transl}, we get 
\begin{equation*}
\begin{split}
    |\psi_1 & *{\phi}_s (\mathbf x,\mathbf y)|=\Big| \int_{\mathbb{R}^N} (\psi_1(\mathbf x,\mathbf z)-\psi_1(\mathbf x,\mathbf y)){\phi}_s(\mathbf z,\mathbf y)\, dw(\mathbf z)\Big|\\
    &\leq C\|\psi\|_{C^{\kappa}}\| \phi\|_{C^{\kappa}}\int_{d(\mathbf z,\mathbf y)\leq s} \frac{\|\mathbf z-\mathbf y\|}{1+\|\mathbf x-\mathbf y\|}w((B(\mathbf x,1))^{-1/2}\big( w(B(\mathbf z,1))^{-1/2}+w(B(\mathbf y,1))^{-1/2}\big)\\
    &\hskip 1cm \times w(B(\mathbf z,s))^{-1} \Big(1+\frac{\|\mathbf z-\mathbf y\|}{s}\Big)^{-1}\, dw(\mathbf z)\\
    &\leq C\|\psi\|_{C^{\kappa}}\| \phi\|_{C^{\kappa}}(1+\|\mathbf{x}-\mathbf{y}\|)^{-1}sw(B(\mathbf x,1))^{-1/2}\\
    &\hskip 1cm \times \int_{d(\mathbf{z},\mathbf{y})\leq s}\big( w(B(\mathbf z,1))^{-1/2}+w(B(\mathbf y,1))^{-1/2}\big)w(B(\mathbf z,s))^{-1}\, dw(\mathbf z). 
\end{split}\end{equation*}
Let us note that, by~\eqref{eq:measure}, $w(B(\mathbf z,1))\sim w(B(\mathbf y,1))$ if $d(\mathbf z,\mathbf y)\leq s\leq 1$. Therefore, 
\begin{equation*}
     |\psi_1  *\phi_s (\mathbf x,\mathbf y)|\leq C\|\psi\|_{C^{\kappa}}\| \phi\|_{C^{\kappa}}(1+\|\mathbf x-\mathbf y\|)^{-1}sw(B(\mathbf x,1))^{-1/2}w(B(\mathbf y,1))^{-1/2}. 
\end{equation*}
Now~\eqref{teo_a} of the theorem follows from the facts that we  $w(B(\mathbf x,1))\sim w(B(\mathbf y,1))$, if $d(\mathbf x,\mathbf y)\leq 2$. Moreover, $ \psi*\phi_s(\mathbf x,\mathbf y)=0$ if $d(\mathbf x,\mathbf y)>2$ { and $0<s\leq 1$} (see \eqref{eq:supp_transl}). 

{\it Proof of~\eqref{teo_b}.}  Note that $\psi*\phi_s(\mathbf x,\mathbf y)=0$ if $d(\mathbf x,\mathbf y)\geq 2$ and $0<s\leq t=1$ (see~\eqref{eq:supp_transl}).  Further,  since $s^m \Delta_k^{m/2}({\eta}_s)(\mathbf x)=\phi_s(\mathbf x)$, 
\begin{equation}\label{eq:conv_psi_phi}
    \psi_1*\phi_s(\mathbf x)=s^m \psi_1*(\Delta_k^{m/2} (\eta_s))({\mathbf{x}})=s^m(\Delta_k^{m/2}\psi_1)*\eta_s(\mathbf x).
\end{equation}
Consequently, applying part~\eqref{translations_kernels:a} of Theorem \ref{teo:translations_kernels}, we conclude that 
\begin{equation*}
    \begin{split}
        |\psi_1&*\phi_s(\mathbf x,\mathbf y)|=s^m\Big| \int_{\mathbb{R}^N} (\Delta_k^{m/2} \psi_1)(\mathbf{x},\mathbf{z})\eta_s(\mathbf z,\mathbf y)\, dw({\mathbf{z}})\Big|\\
        &\leq C\| \psi\|_{C^{\kappa}}\|\eta\|_{C^\kappa}s^m\int_{\mathbb{R}^N} w(B(\mathbf x,1))^{-1} (1+\|\mathbf x-\mathbf z\|)^{-1}\\
        &\hskip 1cm \times w(B(\mathbf y,s))^{-1} (1+\|\mathbf z-\mathbf y\|/s)^{-1} \chi_{[0,s]}(d(\mathbf z,\mathbf y))\, dw(\mathbf z)\\
        &\leq  C\| \psi\|_{C^{\kappa}}\|\eta\|_{C^\kappa}  s^m w(B(\mathbf x,1))^{-1}(1+\| \mathbf x-\mathbf y\|)^{-1}.
    \end{split}
\end{equation*}

{\it Proof of~\eqref{teo_c}.} Applying part~\eqref{teo_a} of Theorem~\ref{teo:bounds_conv}, we have 
$$ |\psi_s*\phi_1(\mathbf z,\mathbf y)|\leq C \|\psi\|_{C^\kappa} \|\phi \|_{C^\kappa} s w(B(\mathbf y,1))^{-1} (1+\|\mathbf z-\mathbf y\|)^{-1} \chi_{[0,2]}(d(\mathbf z,\mathbf y)).$$
Recall that $\|\mathbf x-\mathbf x'\|\leq 1$ {and $0<s\leq 1$}. Now, using part~\eqref{translations_kernels:b} of Theorem \ref{teo:translations_kernels}, we obtain 
\begin{equation*}
\begin{split}
|\phi_1&*(\psi_s*\phi_1)(\mathbf x,\mathbf y)-\phi_1*(\psi_s*\phi_1)(\mathbf x',\mathbf y)| \\
& = \Big|\int_{\mathbb{R}^N} (\phi_1(\mathbf x,\mathbf z)-\phi_1(\mathbf x',\mathbf z))(\psi_s*\phi_1)(\mathbf z,\mathbf y)\, dw(\mathbf z)\Big|\\
& \leq C  \|\psi\|_{C^\kappa} \|\phi \|_{C^\kappa}^2 s \int_{\mathbb{R}^N} \frac{\|\mathbf x-\mathbf x'\|}{(1+\|\mathbf x-\mathbf z\|)} w(B(\mathbf z,1))^{-1/2}(w(B(\mathbf x,1))^{-1/2}+w(B(\mathbf x',1))^{-1/2} )\\
&\hskip 1cm \times w(B(\mathbf y,1))^{-1} (1+\|\mathbf z-\mathbf y\|)^{-1} \chi_{[0,2]}(d(\mathbf z,\mathbf y))\, dw(\mathbf z)\\
& \leq  C  \|\psi\|_{C^\kappa} \|\phi \|_{C^\kappa}^2 s w(B(\mathbf x,1))^{-1} \|\mathbf x-\mathbf x'\| (1+\|\mathbf x-\mathbf y\|)^{-1}. 
\end{split}
\end{equation*}
 The proof is finished, because 
$$ |\phi_1*(\psi_s*\phi_1)(\mathbf x,\mathbf y)-\phi_1*(\psi_s*\phi_1)(\mathbf x',\mathbf y)|=0\quad \text{if } \quad d(\mathbf x,\mathbf y)\geq 4 \quad \text{and} \ \|\mathbf x-\mathbf x'\|\leq 1.$$ 

{\it Proof of~\eqref{teo_d}.} We repeat arguments of the proof of part~\eqref{teo_b}. Using~\eqref{eq:conv_psi_phi}, we get 
\begin{equation*}
    \begin{split}
       {\psi}_1*\phi_s(\mathbf x,\mathbf y)&-\psi_1*\phi_s(\mathbf x',\mathbf y)=s^m\int_{\mathbb{R}^N} (\Delta_k^{m/2} \psi_1(\mathbf{x},\mathbf{z})-\Delta_k^{m/2} \psi_1(\mathbf x',\mathbf z))\eta_s(\mathbf z,\mathbf y)\, dw(\mathbf z).
    \end{split}
\end{equation*}
Applying part~\eqref{translations_kernels:c} of Theorem \ref{teo:translations_kernels} to the function $\psi$, part ~\eqref{teo_a} to the function $\phi$, and having in mind that $\|\mathbf x-\mathbf x'\|\leq 1$ {and $0<s\leq 1$}, we get 
\begin{equation*}
    \begin{split}
        |\psi_1*\phi_s(\mathbf x,\mathbf y)&-\psi_1*\phi_s(\mathbf x',\mathbf y)|\\
        &\leq C\|\psi\|_{C^\kappa} \|\phi\|_{C^\kappa} s^m\int_{\mathbb{R}^N} \|\mathbf x-\mathbf x'\| w(B(\mathbf x,1))^{-1} (1+\|\mathbf x-\mathbf z\|)^{-1}\chi_{[0,2]}(d(\mathbf x,\mathbf z))\\
        &\hskip 1cm \times w(B(\mathbf y,s))^{-1} \Big(1+\frac{\|\mathbf z-\mathbf y\|}{s}\Big)^{-1}\chi_{[0,s]}(d(\mathbf z,\mathbf y))\, dw(\mathbf z)\\
        &\leq C \|\psi\|_{C^\kappa} \|\phi\|_{C^\kappa} s^m \|\mathbf x-\mathbf x'\| (1+\|\mathbf x-\mathbf y\|)^{-1} w(B(\mathbf x,1))^{-1}\chi_{[0,4]}((d(\mathbf x,\mathbf y)) .
    \end{split}
\end{equation*}
The proof of Theorem \ref{teo:bounds_conv} is complete. 
\end{proof}

\begin{minipage}{\textwidth}
    \printindex[Other]
\end{minipage}
\vspace{1cm} 
\begin{minipage}{\textwidth}
    \printindex[Const]
\end{minipage}

\section{Summary of key mathematical constants, their first appearance, and relationships}
\label{appendix:B}

\renewcommand{\arraystretch}{1.3}

\begin{table}[H]
    \centering
    \rowcolors{2}{gray!10}{white} 
    \begin{tabular}{>{$}c<{$} l m{10cm}} 
        \toprule
        \rowcolor{gray!40}
        \textbf{Constant} & \textbf{First Appearance} & \textbf{Relations/Comments} \\
        \midrule
        \varepsilon_0 & Theorem~\ref{KeyTheorem} & $\varepsilon_0 = (100C_{11}C_{12})^{-1}$ \\ 
        A & Theorem~\ref{KeyTheorem} & Fixed at the end of the proof \\ 
        A_0 & Theorem~\ref{KeyTheorem} & $A_0 = 32\sqrt{N}$ \\ 
        A_1 & Equation~\eqref{c11} & $A_1 = C_{11} \cdot \frac{100}{99}$ \\ 
        A_2 & Equation~\eqref{c12} & $A_2 = \frac{20}{9} C_{11}^2 A_1$ \\ 
        A_3 & Equation~\eqref{c13} & $A_3 = A_1 (1 + A_0)^{N_1 -1} C_{11}C_{12}^2 + \frac{100}{99} 4 C_{11}^2 C_{12}^2 (1+A_0)^{N_1 -1} A_1$ \\ 
        A_4 & Equation~\eqref{c6} & $A_4 = A_3 \sqrt{N} + C_{12} A_2$ \\ 
        C_{10} & Proposition~\ref{prop:changg} &  \\ 
        C_{11} & Lemma~\ref{cor:b_Q_ort} &  \\ 
        C_{12} & Lemma~\ref{lem:Ch-Gell12} &  \\ 
        C_{13} & Equation~\eqref{eq:g_0infty} & Does not depend on $f \in \mathrm{BMO}$ of compact support \\ 
        C_{14} & Equation~\eqref{G1} & $C_{14} = C_{11}C_{12}$ \\ 
        C_{15} & Equation~\eqref{G4} & $C_{15} = 4C_{11}^2 C_{12}^2 (1+A_0)^{N_1 -1} A_1$ \\ 
        C_{16} & Equation~\eqref{product} & \begin{tabular}{@{}l@{}} 
    $C_{16} = C_{11}^2 C'$, where $C' > 0$ satisfies \\[4pt]
    \quad $ \displaystyle \frac{1}{w(P)} \int (1 + d(\mathbf{x},\mathbf{z}_P)/\ell(P))^{-\mathbf{N} -1} \, dw(\mathbf{x}) \leq C'.$ 
\end{tabular} \\ 
        C_{17} & Equation~\eqref{eq:C_19_in} & Depends only on $(N, R, k)$ \\ 
        C_{18} & Equation~\eqref{eq:C_20_in} & $C_{18} = 2C_{16} \sqrt{C_{17}}$ \\ 
        C_{19} & Below~\eqref{eq:C_20_in} & Depends only on $(N, R, k)$ \\ 
        C_{20} & Below~\eqref{eq:C_20_in} & $C_{20} = C_{18} \sqrt{C_{19}}$ \\ 
        C_{21} & Below~\eqref{eq:ELL} & Depends only on $(N, R, k)$ \\ 
        C_{22} & Equation~\eqref{sumE} & $C_{22} = C_{12} A_2 C_{10} C_{21} 2^{N_0}$ \\ 
        C_{23} & Equation~\eqref{norm3} & $C_{23} = C_{J{\text{-}}N,2}(A_0/2)^{\mathbf{N}/2}(A_4+C_{22})$ \\ 
        C_{24} & Equation~\eqref{eq:FinBMO} & Depends on $(N,R,k)$ and $\overrightarrow{S}$ \\ 
        C_{25} & Equation~\eqref{eq:bound_F} & Depends on $(N,R,k)$ and $\overrightarrow{S}$ \\ 
        C_{26} & Equation~\eqref{eq:L2Ac} & Depends on $(N,R,k)$ and $\overrightarrow{S}$ \\  
        C_{J{\text{-}}N,p} & Lemma~\ref{lem:BMO_compactly_supported} & Constant in John-Nirenberg inequality \\ 
        M_1 & Equation~\eqref{eq:M_1} & $M_1 = 8 \lceil \mathbf{N} +1\rceil$ \\ 
        N_0 & Equation~\eqref{eq:constantsN} & $N_0 > 2\mathbf{N}$ \\ 
        N_1 & Equation~\eqref{eq:constantsN} & $N_0 + 1 < N_1 \leq (4M_1 + N -1)/2$ \\ 
        N_2 & Above~\eqref{product} & $N_2 = N_1 - \mathbf{N}$ \\ 
        \bottomrule
    \end{tabular}
    \label{tab:constants}
\end{table}

 \end{document}